\newtheorem{thm}{Theorem}[section]
\newtheorem{lemma}[thm]{Lemma}
\newtheorem{prop}[thm]{Proposition}
\newtheorem{cor}[thm]{Corollary}
\theoremstyle{definition}
\newtheorem{defn}[thm]{Definition}
\newtheorem{rem}[thm]{Remark}
\newtheorem{example}[thm]{Example}
\numberwithin{equation}{section}
\DeclareMathOperator{\Div}{div}
\DeclareMathOperator*{\im}{im}
\DeclareMathOperator{\lspan}{span}
\DeclareMathOperator{\rk}{rk}
\DeclareMathOperator*{\Supp}{supp}
\newcommand{\cone}{{\mathrm{cone}}}
\newcommand{\Hom}{{\mathrm{Hom}}}
\newcommand{\PGL}{{\mathrm{PGL}}}
\newcommand{\SL}{{\mathrm{SL}}}
\newcommand{\Spin}{\mathrm{Spin}}
\newcommand{\rleft}{\mathopen{}\mathclose\bgroup\left}
\newcommand{\rright}{\aftergroup\egroup\right}
\newcommand{\Cd}{{\mathbb{C}}}
\newcommand{\Qd}{{\mathbb{Q}}}
\newcommand{\Zd}{{\mathbb{Z}}}
\newcommand{\C}{\Cd}
\newcommand{\Q}{\Qd}
\newcommand{\Z}{\Zd}
\newcommand{\X}{\Xf}
\newcommand{\Cm}{{\mathcal{C}}}
\newcommand{\Dm}{{\mathcal{D}}}
\newcommand{\Hm}{{\mathcal{H}}}
\newcommand{\Mm}{{\mathcal{M}}}
\newcommand{\Nm}{{\mathcal{N}}}
\newcommand{\Pm}{{\mathcal{P}}}
\newcommand{\Sm}{{\mathcal{S}}}
\newcommand{\Vm}{{\mathcal{V}}}
\newcommand{\Xf}{{\mathfrak{X}}}
\newcommand{\Ss}{{\mathscr{S}}}
\newcommand{\Bt}{{\widetilde{B}}}
\newcommand{\Dmt}{{\widetilde{\Dm}}}
\newcommand{\Dt}{{\widetilde{D}}}
\newcommand{\Gt}{{\widetilde{G}}}
\newcommand{\Ht}{{\widetilde{H}}}
\newcommand{\iotat}{{\widetilde{\iota}}}
\newcommand{\Mmt}{{\widetilde{\Mm}}}
\newcommand{\Nmt}{{\widetilde{\Nm}}}
\newcommand{\Phit}{{\widetilde{\Phi}}}
\newcommand{\psit}{{\widetilde{\psi}}}
\newcommand{\rhot}{{\widetilde{\rho}}}
\newcommand{\Sigmat}{{\widetilde{\Sigma}}}
\newcommand{\Smt}{{\widetilde{\Sm}}}
\newcommand{\St}{{\widetilde{S}}}
\newcommand{\Tt}{{\widetilde{T}}}
\newcommand{\varthetat}{{\widetilde{\vartheta}}}
\newcommand{\Vmt}{{\widetilde{\Vm}}}
\newcommand{\with}{\colon}
\newcommand{\set}[1]{\rleft\{{#1}\rright\}}
\DeclareRobustCommand{\SkipTocEntry}[5]{}}{%
\DeclareRobustCommand{\SkipTocEntry}[4]{}}
\begin{document}
\selectlanguage{english}

\title{Containment Relations among Spherical Subgroups}

\author{Johannes Hofscheier}
\address{Department of Mathematics and Statistics\\ McMaster University\\ 1280 Main Street West\\ Hamilton, Ontario L8S4K1\\ Canada}
\email{johannes.hofscheier@math.mcmaster.ca}

\subjclass[2010]{Primary 14M27; Secondary 14L30, 20G05}
\keywords{spherical subgroup, spherical homogeneous space, homogeneous spherical datum, color}

\begin{abstract}
  A closed subgroup $H$ of a connected reductive group $G$ is called \emph{spherical} if a Borel subgroup in $G$ has an open orbit on $G/H$. We give a combinatorial characterization for a spherical subgroup to be contained in another one which generalizes previous work by Knop. As an application, we compute the Luna datum of the identity component of a spherical subgroup which yields a characterization of connectedness for spherical subgroups.
\end{abstract}

\maketitle

\section{Introduction}
\label{sec:introduction}
Throughout the paper, we work with algebraic varieties and algebraic groups over the field of complex numbers $\C$.

Let $G$ be a connected reductive complex algebraic group. A closed subgroup $H \subseteq G$ is called \emph{spherical} if $G/H$ has a dense open orbit for a Borel subgroup $B \subseteq G$. In this case, $G/H$ is called a \emph{spherical homogeneous space}. An equivariant open embedding of $G/H$ into a normal irreducible $G$-variety $X$ is called a \emph{spherical embedding}, and $X$ is called a \emph{spherical variety}.

The Luna conjecture \cite{Luna:TypeA}, which has been proved by several researchers (see \cite{BP:PrimitiveWonderful,CF:WonderfulVar,Losev:Unique}), provides a combinatorial description of the spherical subgroups of a given connected reductive group $G$. In this work, we give a description of the set $\Hm_H$ of spherical subgroups of $G$ which contain a fixed spherical group $H \subseteq G$ in terms of the combinatorial objects of the Luna conjecture. This extends \cite[Section 4]{Knop:LV} where Knop gives a combinatorial description of the subset $\Hm_H^{\mathsf{c}} \subseteq \Hm_H$ of \emph{co-connected} spherical subgroups $\Ht \subseteq G$ containing $H$, i.e., $\Ht/H$ is connected. The set $\Hm_H$ plays an important role in the study of equivariant morphisms between spherical varieties (see, for instance, \cite[Section 4]{Knop:LV}). Indeed, a second spherical subgroup $H' \subseteq G$ contains $H$ (up to conjugation) if and only if there is a $G$-equivariant morphism $G/H \to G/H'$.

\subsection{Preliminaries}
\label{sec:prelim}

Before stating our main result, let us recall the combinatorial description of spherical subgroups: Fix a Borel subgroup $B \subseteq G$ and a maximal torus $T \subseteq B$, denote by $R$ the associated root system in the character lattice $\X(T)$, and write $S \subseteq R$ for the set of simple roots corresponding to $B$. The character lattices $\X(B)$ and $\X(T)$ are naturally identified via restricting characters from $B$ to $T$. Luna assigned to any spherical homogeneous space $G/H$ its \emph{Luna datum} (or \emph{homogeneous spherical datum}) $\Sm \coloneqq ( \Mm , \Sigma , S^p, \Dm^a )$, which, by a theorem of Losev (see \cite{Losev:Unique}), uniquely determines $G/H$ up to $G$-equivariant isomorphisms. Let us describe the combinatorial invariants in $\Sm$.

$\Mm$ denotes the weight lattice of $B$-semi-invariant rational functions $f$ on $G / H$, i.e., $f \in \C(G/H)^{(B)}$, and a rational function $f_\chi \in \C( G / H )^{(B)}$ is uniquely determined (up to a constant factor) by its $B$-weight $\chi \in \X(B)$. Let $\Nm \coloneqq \Hom( \Mm, \Z )$ be the dual lattice together with the natural pairing $\langle \cdot, \cdot \rangle \colon \Nm \times \Mm \to \Z$.

$\Vm$ denotes the set of $G$-invariant discrete valuations $\nu \colon \C( G / H )^* \to \Q$ and the assignment $\langle \nu, \chi \rangle \coloneqq \nu(f_\chi)$ for $\nu \in \Vm$ induces an inclusion $\Vm \subseteq \Nm_\Q \coloneqq \Nm \otimes \Q$ (see \cite[7.4, Proposition]{LunaVust}), where $\Vm$ is known to be a cosimplicial cone (see~\cite{Brion:GenEspSym}) called the \emph{valuation cone}. The set of \emph{spherical roots} $\Sigma$ of $G/H$ consists of the primitive generators in $\Mm$ of the extremal rays of the negative of the dual of the valuation cone $\Vm$.

The stabilizer $P$ of the open $B$-orbit in $G/H$ is a parabolic subgroup of $G$ containing $B$, hence uniquely determines a subset $S^p \subseteq S$ of simple roots.

We write $\Dm$ for the set of \emph{colors} of $G/H$, i.e., the set of $B$-invariant prime divisor in $G/H$. To any $D \in \Dm$, we associate the element $\rho(D) \in \Nm$ defined by $\langle \rho(D), \chi \rangle \coloneqq \nu_D( f_\chi )$ where $f_\chi \in \C( G / H )^{(B)}$ is a $B$-semi-invariant rational function of weight $\chi \in \Mm$ and $\nu_D$ denotes the valuation induced by $D$. For any $\alpha \in S$, there is a minimal parabolic subgroup $P_\alpha \subseteq G$ containing $B$ and corresponding to $\alpha$. We introduce the set of colors $\Dm( \alpha ) \subseteq \Dm$ \emph{moved} by $P_\alpha$, i.e., the colors $D \in \Dm$ such that $P_\alpha \cdot D \neq D$. We obtain a map $\varsigma \colon \Dm \to \Pm( S )$ which assigns to a color $D$ the set of simple roots $\alpha$ such that $P_\alpha$ moves $D$ ($\Pm(S)$ denotes the power set of $S$). The colors are divided into three types: $D \in \Dm(\alpha)$ is said to be of \emph{type $a$} if $\alpha \in \Sigma$, of \emph{type $2a$} if $2 \alpha \in \Sigma$, and otherwise it is said to be of \emph{type $b$}. The type does not depend on the choice of $\alpha \in \varsigma(D)$ and we obtain a disjoint union $\Dm = \Dm^a \cup \Dm^{2a} \cup \Dm^b$ where  $\Dm^a$, $\Dm^{2a}$ and $\Dm^b$ denote the sets of colors of the corresponding types. The fourth entry $\Dm^a$ of $\Sm$ is treated as an abstract set equipped with the map $\rho|_{\Dm^a}$.

Fix a spherical subgroup $H \subseteq G$ and let $\Sm = ( \Mm, \Sigma, S^p, \Dm^a)$ be its Luna datum. In \cite[Section 4]{Knop:LV}, Knop obtains a correspondence between $\Hm_H^{\mathsf{c}}$ and \emph{colored subspaces}, i.e., pairs $( \Nm^1_\Q, \Dm^1 )$ where $\Nm^1_\Q \subseteq \Nm_\Q$ is a subspace and $\Dm^1 \subseteq \Dm$ is a subset such that $\Nm^1_\Q$ coincides with the cone spanned by $\Vm \cap \Nm^1_\Q$ and $\rho(D)$ for all $D \in \Dm^1$. For any colored subspace $( \Nm^1_\Q, \Dm^1 )$, the Luna datum $\Sm_0 = ( \Mm_0, \Sigma_0, S^p_0, \Dm^a_0 )$ of the corresponding subgroup $H_0$ is given as follows (see, for instance, \cite[Proposition 3.4.3]{Losev:Unique})
\[
  \Mm_0 = ( \Nm^1_\Q )^\perp \cap \Mm, \; S^p_0 = \set{ \alpha \in S \with \Dm( \alpha ) \subseteq \Dm^1 }, \; \Dm^a_0 = \set{ D \in \Dm^a \with \varsigma(D) \cap \Sigma_0 \neq \emptyset } \text{,}
\]
where $\Sigma_0$ are the primitive ray generators in $\Mm_0$ of $\cone(\Sigma) \cap ( \Nm^1_\Q )^\perp$, and $\Dm^a_0$ is equipped with the map $\rho_0 \colon \Dm^a_0 \to \Hom( \Mm_0, \Z)\eqqcolon \Nm_0; D \mapsto ( \pi \circ \rho) (D)$ where $\pi \colon \Nm \to \Nm_0$ is the map dual to the inclusion $\iota \colon \Mm_0 \hookrightarrow \Mm$.

Recall from \cite[Definition 4.1.1]{Losev:Unique}, that the set of \emph{distinguished roots} $\Sigma^+$ consists of all $\gamma \in \Sigma$ such that, (1) $\gamma$ is in the \emph{root lattice} $\X(R)$, (2) there is a spherical subgroup in $G$ with Luna datum $( \Z 2\gamma, \set{ 2\gamma }, S^p, \emptyset )$, and, (3) if $\gamma \in S$, then $\rho(D^+) = \rho(D^-)$ where $\Dm(\gamma) = \set{ D^\pm }$.

\subsection{Main Results}
\label{sec:main-res}

\begin{defn}
  \label{defi:dist-pair}
  A pair $( \Mmt, \Dm^1 )$ of a subgroup $\Mmt \subseteq \Mm$ and a subset $\Dm^1 \subseteq \Dm$ is called \emph{distinguished} if $\Cm \coloneqq ( \Mmt^\perp, \Dm^1 )$ is a colored subspace, and $\tfrac{1}{2} \gamma \in \Sigma_0^+ \cup \rleft( \Sigma_0 \setminus \X(R) \rright)$ for every $\gamma \in \Sigmat \setminus \Sigma_0$ where $( \Mm_0, \Sigma_0, S^p_0, \Dm^a_0 )$ is the Luna datum of the spherical subgroup corresponding to $\Cm$ and $\Sigmat$ is the set of primitive generators in $\Mmt$ of $\cone( \Sigma ) \cap \Mmt_\Q$.
\end{defn}

\begin{thm}
  \label{thm:corrsp-dist-pairs-subgrps}
  Let $\Ht \in \Hm_H$ with Luna datum $\Smt = ( \Mmt, \Sigmat, \St^p, \Dmt^a )$. Let $\Dm^1 \subseteq \Dm$ be the subset of colors of $G/H$ which get mapped dominantly onto $G/\Ht$ under the natural map $G/H \to G/\Ht$. Then $( \Mmt, \Dm^1 )$ is a distinguished pair and the assignment $\Ht \mapsto ( \Mmt, \Dm^1 )$ induces a bijection between $\Hm_H$ and distinguished pairs.
\end{thm}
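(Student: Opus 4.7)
My plan is to split any inclusion $H \subseteq \Ht$ into a co-connected piece followed by a finite piece, so as to leverage Knop's classification for the first and the theory of finite spherical covers for the second. Concretely, set $\Ht_0 \coloneqq H \cdot \Ht^\circ$. Since $\Ht^\circ$ is connected, $\Ht_0/H \cong \Ht^\circ/(H \cap \Ht^\circ)$ is connected, so $\Ht_0 \in \Hm_H^{\mathsf{c}}$; and $\Ht_0$ is normal in $\Ht$ with $\Ht/\Ht_0$ finite. The bijection of the theorem should therefore be the composite of Knop's correspondence (providing the colored-subspace datum $\Dm^1$) with a classification of finite-index spherical overgroups of $\Ht_0$ (providing the sublattice $\Mmt \subseteq \Mm_0$).

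For the forward direction, starting from $\Ht \in \Hm_H$, I would apply Knop's theorem to the co-connected extension $H \subseteq \Ht_0$ to produce a colored subspace $(\Nm^1_\Q, \Dm^1)$. The set $\Dm^1$ coincides with the set of colors of $G/H$ mapping dominantly to $G/\Ht$, because $G/\Ht_0 \to G/\Ht$ is finite and hence preserves dominance of color images. Pulling back $B$-semi-invariants along $G/\Ht_0 \to G/\Ht$ embeds $\Mmt$ as a finite-index sublattice of $\Mm_0$, so $\Mmt^\perp = \Mm_0^\perp = \Nm^1_\Q$, confirming that $(\Mmt^\perp, \Dm^1)$ is a colored subspace. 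Each ray of $\cone(\Sigma_0)$ has $\Mm_0$-primitive generator $\gamma_0 \in \Sigma_0$ and $\Mmt$-primitive generator a positive integer multiple $k\gamma_0 \in \Sigmat$; the axioms of a Luna datum applied to $\Smt$ force $k \in \set{1,2}$, and in the case $k=2$ force $\gamma_0 \in \Sigma_0^+ \cup (\Sigma_0 \setminus \X(R))$, which is exactly Definition~\ref{defi:dist-pair}.

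For the reverse direction, given a distinguished pair $(\Mmt, \Dm^1)$, I first apply Knop to $(\Mmt^\perp, \Dm^1)$ to obtain $\Ht_0 \in \Hm_H^{\mathsf{c}}$ with Luna datum $\Sm_0 = (\Mm_0, \Sigma_0, S^p_0, \Dm^a_0)$. It remains to exhibit an $\Ht \supseteq \Ht_0$ with $\Ht/\Ht_0$ finite and Luna lattice $\Mmt$. Using that $N_G(\Ht_0)/\Ht_0$ is a diagonalisable group acting freely on $G/\Ht_0$, subgroups $\Ht/\Ht_0 \subseteq N_G(\Ht_0)/\Ht_0$ correspond bijectively to finite-index sublattices $\Mmt \subseteq \Mm_0$ whose induced quadruple $(\Mmt, \Sigmat, S^p_0, \Dmt^a)$ — with $\Sigmat$, $\Dmt^a$ built from $\Sm_0$ exactly as in the definition — is a bona fide Luna datum. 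The distinguishedness condition on $(\Mmt, \Dm^1)$ is precisely what makes these axioms hold; once they do, the Luna conjecture (now a theorem) supplies an $\Ht$ realising the datum, and Losev's uniqueness pins it down up to conjugation. That the two assignments are mutually inverse is then a book-keeping check.

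The main obstacle is verifying in this last step that the candidate $\Sigmat$ meets the axioms of a spherical root system, concretely that $2\gamma_0 \in \Sigmat$ only when $\gamma_0 \in \Sigma_0^+$ or $\gamma_0 \notin \X(R)$. This is where the distinguished roots enter in the expected way: the Luna axiom governing a spherical root of ``doubled'' type demands the existence of a rank-one spherical subgroup with datum $(\Z 2\gamma_0, \set{2\gamma_0}, S^p, \emptyset)$, which is the defining condition of $\Sigma^+$, while the alternative case $\gamma_0 \notin \X(R)$ is permitted by the axioms without requiring any such rank-one model. Packaging these two cases together is exactly what Definition~\ref{defi:dist-pair} does, so once this axiom check is completed the bijection falls out.
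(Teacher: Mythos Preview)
Your proposal is correct and follows essentially the same strategy as the paper: both factor $H \subseteq \Ht$ through $\Ht_0 = H\Ht^\circ$, invoke Knop's colored-subspace bijection for the co-connected step $H \subseteq \Ht_0$, and then classify the finite extension $\Ht_0 \subseteq \Ht$ by finite-index sublattices $\Mmt \subseteq \Mm_0$ satisfying the distinguishedness constraint. The paper packages the finite step as a separate result (Theorem~\ref{thm:bijection-finite-quot}) whose proof, via $(B\times H)$-semi-invariant functions on $G$ and Luna's lemma on intermediate subgroups of $N_G(H)$, is somewhat more delicate than your sketch indicates, but the architecture is the same.
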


Theorem \ref{thm:corrsp-dist-pairs-subgrps} can be rephrased using Luna data which yields a generalization of a quotient system of spherical systems (see \cite[Section 2.3]{BL:F4}).

\begin{defn}
  \label{def:subdatum}
  A Luna datum $\Smt = ( \Mmt, \Sigmat, \St^p, \Dmt^a )$ is called a \emph{subdatum of $\Sm$}, if there exists a distinguished pair $( \Mmt, \Dm^1 )$ such that
  \begin{enumerate}
  \item $\Sigmat$ are the primitive ray generators in $\Mmt$ of $\cone(\Sigma) \cap \Mmt_\Q$,
  \item $\St^p = \set{ \alpha \in S \with \Dm( \alpha ) \subseteq \Dm^1 }$, and
  \item $\Dmt^a = \set{ D \in \Dm^a \with \varsigma( D ) \cap \Sigmat \neq \emptyset }$ equipped with the map $\rhot \colon \Dmt^a \to \Hom( \Mmt, \Z ) \eqqcolon \Nmt; D \mapsto ( \pi \circ \rho) (D)$ where $\pi \colon \Nm \to \Nmt$ is the map dual to the inclusion $\iota \colon \Mmt \hookrightarrow \Mm$.
  \end{enumerate}
  We obtain a partial ordering on the set of Luna data, namely $\Smt \preceq \Sm$ if $\Smt$ is a subdatum of $\Sm$.
\end{defn}

\begin{thm}
  \label{thm:corr-subdata-subgrp}
  The assignment $\Smt \mapsto \Ht$ which associates to a subdatum $\Smt \preceq \Sm$ the corresponding spherical subgroup $\Ht$ induces an order-reversing bijection from the set of subdata of $\Sm$ onto $\Hm_H/\sim$ where \enquote{$\sim$} denotes the equivalence relation induced by conjugation.
\end{thm}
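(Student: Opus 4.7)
The plan is to derive Theorem~\ref{thm:corr-subdata-subgrp} directly from Theorem~\ref{thm:corrsp-dist-pairs-subgrps} together with Losev's uniqueness theorem for Luna data.

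I would first construct the assignment $\Smt \mapsto [\Ht]$. By Definition~\ref{def:subdatum}, a subdatum $\Smt$ comes equipped with at least one distinguished pair $(\Mmt, \Dm^1)$ realizing it via the three formulas in that definition. Theorem~\ref{thm:corrsp-dist-pairs-subgrps} attaches to this pair a spherical subgroup $\Ht \in \Hm_H$, whose Luna datum I would identify with $\Smt$ by observing that the formulas in Definition~\ref{def:subdatum} are precisely the ones that produce the Luna datum of the spherical subgroup associated to a distinguished pair---this extends the co-connected case recalled in Section~\ref{sec:prelim}. In particular, any two distinguished pairs yielding the same subdatum $\Smt$ produce spherical subgroups whose Luna data agree, and hence are $G$-conjugate by Losev's theorem. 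This establishes well-definedness of the map to $\Hm_H/\sim$.

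For surjectivity, I would start with an arbitrary $\Ht \in \Hm_H$ of Luna datum $\Smt'$, extract the associated distinguished pair $(\Mmt', \Dm^1)$ from Theorem~\ref{thm:corrsp-dist-pairs-subgrps}, and apply the formulas of Definition~\ref{def:subdatum} to recover $\Smt'$. Thus $\Smt'$ is a subdatum mapping to $[\Ht]$. Injectivity is immediate: two subdata mapping to a common conjugacy class $[\Ht]$ must both coincide with the Luna datum of $\Ht$, and hence with each other.

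The order-reversing property splits into two implications. If $\Ht_1 \subseteq \Ht_2$ lie in $\Hm_H$ with subdata $\Smt_1, \Smt_2$, then $\Ht_2 \in \Hm_{\Ht_1}$, and applying Theorem~\ref{thm:corrsp-dist-pairs-subgrps} to the spherical homogeneous space $G/\Ht_1$ produces a distinguished pair inside $\Smt_1$ whose associated Luna datum is $\Smt_2$, so $\Smt_2 \preceq \Smt_1$. Conversely, given $\Smt_2 \preceq \Smt_1 \preceq \Sm$, I would invoke Theorem~\ref{thm:corrsp-dist-pairs-subgrps} twice in succession: once on $G/H$ to extract $\Ht_1 \supseteq H$ realizing $\Smt_1$, and once on $G/\Ht_1$ to extract $\Ht_2 \supseteq \Ht_1$ realizing $\Smt_2 \preceq \Smt_1$. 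The main obstacle I expect is the transitivity inherent in this second half: one must verify that the lattice tower $\Mmt_2 \subseteq \Mmt_1 \subseteq \Mm$ and the corresponding nested sets of colors compose compatibly, and that the distinguished-root condition of Definition~\ref{defi:dist-pair} is preserved under this composition. This is the only step where the specific combinatorial conditions defining a distinguished pair (the colored-subspace requirement combined with the $\Sigma^+$-condition) enter in a nontrivial way; everything else is formal and flows from Theorem~\ref{thm:corrsp-dist-pairs-subgrps} and Losev's uniqueness.
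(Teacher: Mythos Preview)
Your approach is correct and close to the paper's, but organized differently. The paper does \emph{not} route the proof of Theorem~\ref{thm:corr-subdata-subgrp} through Theorem~\ref{thm:corrsp-dist-pairs-subgrps}; instead it re-runs the Stein factorization directly. Given a subdatum $\Smt$ with distinguished pair $(\Mmt,\Dm^1)$, the paper first passes to the co-connected intermediate $H_0$ coming from the colored subspace $(\Mmt^\perp,\Dm^1)$ via Knop, checks that $\Smt\in\Ss^{\mathsf f}\Sm_0$, and then applies Theorem~\ref{thm:bijection-finite-quot} to obtain $\Ht\supseteq H_0$ with Luna datum exactly $\Smt$. Surjectivity goes the same way in reverse. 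The advantage of this route is that the identification of the Luna datum of $\Ht$ with $\Smt$ is automatic from Theorem~\ref{thm:bijection-finite-quot}, whereas in your packaging you must separately argue that the formulas of Definition~\ref{def:subdatum} compute the Luna datum of the subgroup attached to a distinguished pair by Theorem~\ref{thm:corrsp-dist-pairs-subgrps}---a true fact, but one whose proof \emph{is} the Stein factorization again. So your black-box use of Theorem~\ref{thm:corrsp-dist-pairs-subgrps} hides rather than avoids the same work.

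Two further remarks. First, the paper's proof does not spell out the order-reversing property at all; your treatment of it is more careful than the original. Second, the ``transitivity obstacle'' you flag in the converse direction is not a genuine difficulty: once you have $\Ht_1\in\Hm_H$ realizing $\Smt_1$ and then $\Ht_2\in\Hm_{\Ht_1}$ realizing $\Smt_2$, you get $\Ht_2\in\Hm_H$ with Luna datum $\Smt_2$, and Losev's uniqueness forces it to represent the correct conjugacy class. No compatibility check on nested distinguished pairs is needed.
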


\begin{cor}
  The Luna datum $\Sm^\circ = ( \Mm^\circ, \Sigma^\circ, (S^\circ)^p, (\Dm^\circ)^a )$ of the identity component $H^\circ$ of a spherical subgroup $H \subseteq G$ with Luna datum $\Sm$ is  given by
  \[
    \Mm^\circ = \set{ x \in \Mm_\Q \cap \X(B) \colon \langle \rho( \Dm ), x \rangle \subseteq \Z }, \quad (S^\circ)^p = S^p \text{,}
  \]
  and $\Sigma^\circ$ is the set of primitive generators in $\Mm^\circ$ of the extremal rays of $\cone( \Sigma )$. For every $\alpha \in ( \tfrac{1}{2} \Sigma ) \cap S \cap \Sigma^\circ$, we introduce pairwise distinct elements $D_\alpha^{\pm}$ and set $\rho^\circ( D_\alpha^{\pm} ) \coloneqq \tfrac{1}{2} \check{\alpha}|_{\Mm^\circ}$. Moreover, for $D \in \Dm^a$, we set $\rho^\circ(D)|_\Mm \coloneqq \rho(D)$ which is possible as $\Mm$ is a sublattice of $\Mm^\circ$ of finite index. Then $(\Dm^\circ)^a$ is the disjoint union of $\Dm^a$ and the elements $D_\alpha^\pm$ from above.
\end{cor}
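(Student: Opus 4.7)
The plan is to apply Theorem~\ref{thm:corr-subdata-subgrp} in reverse, treating $H^\circ$ as the base spherical subgroup and $H$ as the element of $\Hm_{H^\circ}$ whose associated subdatum is $\Sm$. Since $H/H^\circ$ is finite, the map $\pi \with G/H^\circ \to G/H$ is a finite \'etale cover, so $H^\circ$ is itself spherical and carries a Luna datum, which we denote $\Sm^\circ$. Finiteness of $\pi$ implies that no color of $G/H^\circ$ has dense image in $G/H$, so the distinguished pair in $\Sm^\circ$ corresponding to $H$ is $( \Mm, \emptyset )$; using that $\Mm \subseteq \Mm^\circ$ has finite index, the associated colored subspace is $( \{0\}, \emptyset )$. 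We then determine $\Sm^\circ$ entry by entry and verify that $\Sm$ is indeed the subdatum of $\Sm^\circ$ attached to this distinguished pair; Theorem~\ref{thm:corr-subdata-subgrp} will then identify the spherical subgroup corresponding to $\Sm^\circ$ as the unique connected subgroup of finite index in $H$, namely $H^\circ$.

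To identify $\Mm^\circ$, pullback along $\pi$ gives $\Mm \subseteq \Mm^\circ \subseteq \X(B) \cap \Mm_\Q$. If $x \in \Mm^\circ$, the corresponding $B$-semi-invariant $f_x^\circ \in \C(G/H^\circ)^{(B)}$ has integer valuations at every color of $G/H^\circ$; compatibility of valuations under the finite \'etale cover $\pi$ yields $\rho^\circ(D)|_\Mm = \rho(\pi(D))$ whenever $\pi(D) \in \Dm$, hence $\langle \rho(\Dm), x \rangle \subseteq \Z$. Conversely, given $x \in \Mm_\Q \cap \X(B)$ satisfying this integrality condition, pick $n$ with $nx \in \Mm$ and a representative $f_{nx} \in \C(G/H)^{(B)}$; the integrality forces the color part of $\Div(f_{nx})$ to be divisible by $n$, so $f_{nx}$ admits an $n$-th root in the finite extension $\C(G/H^\circ) \supseteq \C(G/H)$, producing a $B$-semi-invariant of weight~$x$.

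The remaining entries of $\Sm^\circ$ come from standard properties of finite \'etale covers of spherical homogeneous spaces. The valuation cones of $G/H$ and $G/H^\circ$ coincide in the common space $\Nm^\circ_\Q = \Nm_\Q$, so $\Sigma^\circ$ arises from $\Sigma$ by replacing each $\gamma$ by its primitive representative in $\Mm^\circ$; by Luna's classification of spherical roots this replacement either leaves $\gamma$ unchanged or halves it. The open $B$-orbit in $G/H^\circ$ is the $\pi$-preimage of the open $B$-orbit in $G/H$, so the two parabolic stabilizers agree and $(S^\circ)^p = S^p$. A local analysis via the minimal parabolic $P_\alpha$ and its $\SL_2$-slice then shows that a color $D \in \Dm(\alpha)$ pulls back to two distinct colors $D_\alpha^{\pm}$ in $G/H^\circ$ precisely when $\alpha \in (\tfrac{1}{2}\Sigma) \cap S \cap \Sigma^\circ$, with $\rho^\circ(D_\alpha^\pm) = \tfrac{1}{2}\check\alpha|_{\Mm^\circ}$ by direct computation in that rank-one setting; for all other colors $\pi$ is generically bijective on $D$, yielding $\rho^\circ(D)|_\Mm = \rho(D)$.

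The distinguishedness of $( \Mm, \emptyset )$ in $\Sm^\circ$ finally reduces to the condition that every $\gamma \in \Sigma \setminus \Sigma^\circ$ with $\tfrac{1}{2}\gamma \in \X(R)$ has $\tfrac{1}{2}\gamma \in (\Sigma^\circ)^+$. This follows from Losev's characterization of distinguished roots: $\Sm$ itself realizes the rank-one Luna datum $(\Z\gamma, \set{\gamma}, S^p, \emptyset)$, verifying condition~(2) of the definition, while condition~(3) is forced by the symmetric formula $\rho^\circ(D_\alpha^+) = \rho^\circ(D_\alpha^-) = \tfrac{1}{2}\check\alpha|_{\Mm^\circ}$ obtained in the previous step. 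The main obstacle is the rank-one analysis underlying the color-splitting rule and the formula for $\rho^\circ(D_\alpha^\pm)$: one must reduce to a spherical $\SL_2$-homogeneous space and use Luna--Vust theory to identify the colors on the cover $\pi$ at that scale; once this is done, Theorem~\ref{thm:corr-subdata-subgrp} closes the argument.
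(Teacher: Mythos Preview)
Your approach differs from the paper's. The paper does not compute $\Sm^\circ$ geometrically from the cover $G/H^\circ \to G/H$; instead it \emph{defines} the quadruple $\Sm^\circ$ by the stated formulas, verifies the axioms of a Luna datum directly, and then invokes the order-reversing bijection of Proposition~\ref{prop:sph-subgrp-below}: since $H^\circ$ is the unique minimal spherical subgroup $\tilde H \subseteq H$ with $H/\tilde H$ finite, and $\Sm^\circ$ is visibly the unique maximal Luna datum $\tilde\Sm$ with $\Sm \in \Ss^{\mathsf f}\tilde\Sm$ (any admissible $\tilde\Mm$ must satisfy $\langle \rho(\Dm), \tilde\Mm\rangle \subseteq \Z$ because the colors of $G/\tilde H$ surject onto those of $G/H$ with compatible $\rho$-values), the two must correspond. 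This sidesteps any direct construction of $B$-eigenfunctions on $G/H^\circ$.

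Your direct computation is a legitimate alternative, but the $\supseteq$ inclusion for $\Mm^\circ$ has a real gap. The step ``$\Div(f_{nx})$ is divisible by $n$, so $f_{nx}$ admits an $n$-th root in $\C(G/H^\circ)$'' is not justified: divisibility of the divisor on $G/H$ only says that the cyclic cover of $G/H$ obtained by adjoining $f_{nx}^{1/n}$ is unramified there, and you give no reason why that cover is dominated by $G/H^\circ \to G/H$. One way to close this uses exactly the machinery of Section~\ref{sec:b-h-semi-inv-func}: after replacing $G$ by $G^{ss}\times C$ so that $\C[G]$ is factorial, Corollary~\ref{cor:pullback-b-ef} writes the pullback of $f_{nx}$ to $G$ as $c\, f_{-nx|_C} \prod_{D\in\Dm} f_D^{\,n\langle \rho(D),x\rangle}$, and the candidate root $g \coloneqq f_{-x|_C}\prod_{D\in\Dm} f_D^{\,\langle \rho(D),x\rangle}$ is a $(B\times H)$-semi-invariant whose $H^\circ$-weight is $n$-torsion in $\X(H^\circ)$, hence trivial since $H^\circ$ is connected; thus $g$ descends to $\C(G/H^\circ)^{(B)}$ with $B$-weight $x$. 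Finally, your closing appeal to Theorem~\ref{thm:corr-subdata-subgrp} is superfluous: once you have computed the entries of the actual Luna datum of $H^\circ$, there is nothing left to identify.
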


\begin{defn}
  \label{def:d-sat}
  A lattice $\Mm' \subseteq \Mm$ is called \emph{$\Dm$-saturated}, if 
  \[
    \Mm' = \set{ x \in \Mm'_\Q \cap \X(B) \with \langle \rho(\Dm), x \rangle \subseteq \Z } \text{.}
  \]
\end{defn}

\begin{cor}
  \label{cor:char-conn}
  A spherical subgroup $H \subseteq G$ is connected if and only if $\Mm$ is \emph{$\Dm$-saturated}.
\end{cor}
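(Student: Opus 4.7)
The plan is to deduce Corollary~\ref{cor:char-conn} from the preceding corollary together with Losev's uniqueness theorem recalled in Section~\ref{sec:prelim}: two spherical subgroups of $G$ are conjugate if and only if their Luna data coincide. Since $H$ is connected precisely when $H = H^\circ$, the corollary reduces to the equivalence
\[
  \Sm = \Sm^\circ \iff \Mm \text{ is } \Dm\text{-saturated.}
\]

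The lattice entry is where the bridge to $\Dm$-saturation is made. The preceding corollary gives the explicit formula
\[
  \Mm^\circ = \set{ x \in \Mm_\Q \cap \X(B) \with \langle \rho(\Dm), x \rangle \subseteq \Z },
\]
so by Definition~\ref{def:d-sat} the equality $\Mm = \Mm^\circ$ is literally the statement that $\Mm$ is $\Dm$-saturated. In particular, if $H$ is connected then Losev yields $\Sm = \Sm^\circ$, whence $\Mm = \Mm^\circ$, which is exactly the $\Dm$-saturation of $\Mm$.

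The main step is the converse: showing that $\Mm = \Mm^\circ$ already forces the remaining entries of $\Sm^\circ$ to agree with those of $\Sm$. The parabolic data $(S^\circ)^p = S^p$ is built into the statement. For the spherical roots, $\Sigma^\circ$ is by definition the set of primitive generators in $\Mm^\circ = \Mm$ of the extremal rays of $\cone(\Sigma)$, and this is precisely $\Sigma$. For $(\Dm^\circ)^a$, I would check that no new divisors $D_\alpha^\pm$ are adjoined: if $\alpha = \tfrac{1}{2}\gamma$ with $\gamma \in \Sigma$ were to lie in $\Sigma^\circ \subseteq \Mm^\circ = \Mm$, then $\gamma$ would fail to be primitive in $\Mm$, contradicting $\gamma \in \Sigma$. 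Hence $(\tfrac{1}{2}\Sigma) \cap S \cap \Sigma^\circ = \emptyset$ and $(\Dm^\circ)^a$ coincides with $\Dm^a$ together with its map $\rho$.

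The main obstacle, such as it is, is just the bookkeeping of matching the entries of $\Sm^\circ$ to those of $\Sm$ once $\Mm = \Mm^\circ$; no non-trivial computation is required beyond tracking primitivity and using that the inclusion $\Mm \subseteq \Mm^\circ$ has finite index so that $\Mm_\Q = \Mm^\circ_\Q$.
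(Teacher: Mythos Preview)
Your proposal is correct and follows essentially the same route as the paper, which leaves Corollary~\ref{cor:char-conn} as an immediate consequence of the explicit description of $\Sm^\circ$: once $\Mm = \Mm^\circ$, the remaining entries of $\Sm^\circ$ automatically match those of $\Sm$, and the equivalence $H = H^\circ \iff \Sm = \Sm^\circ$ is exactly Losev's uniqueness (noting that a conjugate of the connected group $H^\circ$ is connected). The only cosmetic difference is that the paper's Section~\ref{sec:hsd-id-comp} packages the step $H = H^\circ \iff \Sm = \Sm^\circ$ via the order-reversing bijection of Proposition~\ref{prop:sph-subgrp-below}, which already rules out the conjugation ambiguity; your use of Losev plus the observation that connectedness is conjugation-invariant achieves the same thing.
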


\subsection{Organization of the Paper}
\label{sec:orga}
In Section \ref{sec:generalities}, we recall several fundamental results about Luna data. In Section \ref{sec:outlook}, we present the key steps of the proof of our two main theorems. In Section \ref{sec:pullback-col}, we investigate the behavior of a color under $G$-equivariant pullback. These results will be crucial in the proof of Theorem \ref{thm:corrsp-dist-pairs-subgrps} and Theorem \ref{thm:corr-subdata-subgrp} which is stated in Section \ref{sec:finite-quotients}. We complete our work with Section \ref{sec:hsd-id-comp} where we compute the Luna datum of the identity component of a spherical subgroup.

\addtocontents{toc}{\SkipTocEntry}
\section*{Acknowledgments}
The author would like to thank Victor Batyrev for encouragement and highly useful advice, as well as Giuliano Gagliardi for many fruitful discussions and his constant interest in the author's work, as well as for proofreading a preliminary version of the manuscript.

\section{Generalities}
\label{sec:generalities}

\subsection{Luna data}
\label{sec:homog-spher-data}

We recall the combinatorial description of Luna data introduced by Luna \cite{Luna:TypeA} and refer the reader to \cite[Definition~30.21]{Timashev:HSEE} for further details and references.

A spherical subgroup $H \subseteq G$ with Luna datum $( \Mm, \Sigma, S^p, \Dm^a )$ is called \emph{wonderful} if $\Mm = \lspan_\Z \Sigma$. Spherical roots of wonderful subgroups in $G$ of rank $1$, i.e., $\rk(\Mm) = 1$, are said to be \emph{spherical roots of $G$} and were classified by Akhiezer and Brion (see \cite{Akhiezer,Brion:Rank1}). They are nonnegative linear combinations of simple roots with coefficients in $\tfrac{1}{2}\Z$. The  set $\Sigma_G$ of spherical roots of $G$ is finite and can be obtained from Table \ref{tab:sr} (in Remark \ref{rem:howto-sr} we explain how to read Table \ref{tab:sr}).

Let us fix a bit more notation. We write $\X(R)$ for the \emph{root lattice} of a root system $R$ and $\Supp(\gamma) \subseteq S$ for the \emph{support} of $\gamma \in \lspan_\Q (\X(R))$, i.e., the set of simple roots with nonzero coefficient in the expression of $\gamma$ as linear combination of simple roots.

\begin{rem}[How to read Table \ref{tab:sr}]
  \label{rem:howto-sr}
  The second column shows the spherical root $\gamma$ after applying the usual Bourbaki numbering \cite{Bourbaki:Lie} to the simple roots in $\Supp(\gamma)$ whose Dynkin-diagram-type is given in the first column. The simple roots are denoted by $\alpha_i$ if $\Supp(\gamma)$ has one simple factor and $\alpha_1, \alpha_1'$ if $\Supp(\gamma)$ has two simple factors. We have $\lambda \in \{ 1, \tfrac{1}{2}\}$ where $\lambda = 1$ is always possible and $\lambda = \tfrac{1}{2}$ is only possible if the corresponding entry in the second column is in the character lattice $\X(B)$. The third column lists the set of simple roots $S^{pp}(\gamma) \subseteq S$ used in Definition \ref{def:comp-sr}.
\end{rem}

Fix a maximal torus $T \subseteq G$ and a Borel subgroup $B \subseteq G$ containing $T$. Write $R$ for the induced root system and $S$ for the corresponding set of simple roots.
The set of spherical roots $\Sigma_G$ can be straightforwardly extracted from Table \ref{tab:sr} and it splits in two disjoint subsets $(\Sigma_G \cap \X(R))$ and $(\Sigma_G \setminus \X(R))$ depending on whether $\lambda = 1$ or $\lambda = \tfrac{1}{2}$.

\begin{table}[!ht]
  \centering{}
  \begin{tabular}{@{}lll@{}}
    \toprule
    Type of $\Supp(\gamma)$ & $\gamma$ & $S^{pp}(\gamma)$ \\
    \midrule
    $\mathsf{A}_1$ & $\alpha_1$ & $\emptyset$ \\
    $\mathsf{A}_1$ & $2 \alpha_1$ & $\emptyset$ \\
    $\mathsf{A}_1 \times \mathsf{A}_1$ & $\lambda( \alpha_1 + \alpha_1' )$
    & $\emptyset$ \\
    $\mathsf{A}_r$ ($r \ge 2$) & $\alpha_1 + \ldots + \alpha_r$ &
    $\alpha_2, \ldots, \alpha_{r-1}$ \\
    $\mathsf{A}_3$ & $\lambda(\alpha_1 + 2 \alpha_2 + \alpha_3)$ &
    $\alpha_1, \alpha_3$\\
    $\mathsf{B}_r$ ($r \ge 2$) & $\alpha_1 + \ldots + \alpha_r$ &
    $\alpha_2, \ldots, \alpha_{r-1}$ \\
    $\mathsf{B}_r$ ($r \ge 2$) & $2 \alpha_1 + \ldots + 2 \alpha_r$ &
    $\alpha_2, \ldots, \alpha_r$ \\
    $\mathsf{B}_3$ & $\lambda ( \alpha_1 + 2 \alpha_2 + 3 \alpha_3 )$ &
    $\alpha_1, \alpha_2$ \\
    $\mathsf{C}_r$ ($r \ge 3$) & $\alpha_1 + 2 \alpha_2 + \ldots + 2
    \alpha_{r-1} + \alpha_r$ & $\alpha_3, \ldots, \alpha_r$ \\
    $\mathsf{D}_r$ ($r \ge 4$) & $\lambda ( 2 \alpha_1 + \ldots + 2
    \alpha_{r-2} + \alpha_{r-1} + \alpha_r )$ & $\alpha_2, \ldots,
    \alpha_r$ \\
    $\mathsf{F}_4$ & $\alpha_1 + 2 \alpha_2 + 3 \alpha_3 + 2 \alpha_4$ &
    $\alpha_1, \alpha_2, \alpha_3$ \\
    $\mathsf{G}_2$ & $\alpha_1 + \alpha_2$ & $\emptyset$ \\
    $\mathsf{G}_2$ & $2 \alpha_1 + \alpha_2$ & $\alpha_2$ \\
    $\mathsf{G}_2$ & $4 \alpha_1 + 2 \alpha_2$ & $\alpha_2$ \\
    \bottomrule
  \end{tabular}
  \caption{Spherical roots.}
  \label{tab:sr}
\end{table}

\begin{defn}[cf.~{\cite[Section 1.1.6]{BL:F4}} or {\cite[Section 3.4]{Avdeev}}]
  \label{def:comp-sr}
  A pair $(S^p, \gamma)$ with $S^p \subseteq S$ and $\gamma \in \Sigma_G$ is said to be \emph{compatible} if
  \[
    S^{pp}(\gamma) \subseteq S^p \subseteq S^p(\gamma) \text{,}
  \]
  where $S^p(\gamma) \coloneqq \set{ \alpha \in S \with \langle \check{\alpha}, \gamma \rangle = 0 }$ and $S^{pp}(\gamma) \subseteq S$ is given in the third column of Table \ref{tab:sr}.
\end{defn}

\begin{defn}[{\cite{Luna:TypeA}, see also \cite[Definition~30.21]{Timashev:HSEE}}]
  \label{def:hsd}
  A \emph{Luna datum} is a quadruple $(\Mm, \Sigma, S^p, \Dm^a)$ where $\Mm \subseteq \X(B)$ is a sublattice, $\Sigma \subseteq \Mm$ is a linearly independent set of primitive elements, $S^p \subseteq S$, and $\Dm^a$ is a finite set equipped with a map $\rho: \Dm^a \to \Nm \coloneqq \Hom(\Mm, \Z)$ such that the following axioms are satisfied.
  \begin{enumerate}
  \item[($\operatorname{A1}$)] $\langle \rho(D), \gamma \rangle \le 1$ for every $D\in \Dm^a$ and $\gamma \in \Sigma$ with equality holding if and only if $\gamma = \alpha \in \Sigma \cap S$ and $D \in \set{D'_\alpha, D''_\alpha}$ where $D'_\alpha, D''_\alpha \in \Dm^a$ are two distinct elements depending on $\alpha$.
  \item[($\operatorname{A2}$)] $\rho(D'_\alpha) + \rho(D''_\alpha) = \check{\alpha}|_{\Mm}$ for every $\alpha \in \Sigma \cap S$.
  \item[($\operatorname{A3}$)] $\Dm^a = \set{D'_\alpha, D''_\alpha \with \alpha \in \Sigma \cap S}$.
  \item[($\operatorname{\Sigma1}$)] If $\alpha \in \tfrac{1}{2}\Sigma \cap S$, then $\langle \check{\alpha}, \Mm\rangle \subseteq 2\Z$ and $\langle \check{\alpha}, \Sigma \setminus \set{2\alpha}\rangle \le 0$.
  \item[($\operatorname{\Sigma2}$)] If $\alpha, \beta \in S$ are orthogonal and $\alpha + \beta \in \Sigma \cup 2\Sigma$, then $\check{\alpha}|_\Mm = \check{\beta}|_\Mm$.
  \item[($\operatorname{S}$)] $\langle \check{\alpha}, \Mm\rangle = 0$ for every $\alpha \in S^p$ and for every $\gamma \in \Sigma$ the pair $(S^p, \gamma)$ is compatible.
  \end{enumerate}
\end{defn}

\begin{prop}[{\cite[Th\'{e}or\`{e}me 2]{Luna:TypeA}, see also \cite[Theorem 30.22]{Timashev:HSEE}}]
  \label{prop:clc}
  For a spherical subgroup $H \subseteq G$ the quadruple $(\Mm, \Sigma, S^p, \Dm^a)$ as defined above is a Luna datum. This assignment defines a bijection between spherical subgroups $H \subseteq G$ (up to conjugation) and Luna data of $G$.
\end{prop}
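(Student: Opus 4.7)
The plan is to split Proposition \ref{prop:clc} into three separate assertions and address each in turn: (a) the quadruple $(\Mm, \Sigma, S^p, \Dm^a)$ attached to a spherical subgroup satisfies the six axioms of Definition \ref{def:hsd}; (b) conjugate spherical subgroups produce identical Luna data, and the induced map on conjugacy classes is injective; (c) every abstract Luna datum arises from some spherical subgroup. Since the statement is classical and attributed to Luna, with the full conjecture established by Bravi-Pezzini, Cupit-Foutou, and Losev, my goal is to sketch how each of these three pieces is organized rather than to reprove anything from scratch.

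For (a), I would verify the axioms by exploiting the local structure theorem for the open $B$-orbit $B \cdot x_0 \subseteq G/H$. The colors moved by a minimal parabolic $P_\alpha$ can be read off from the action of $P_\alpha$ on $B x_0$: if $\alpha \in \Sigma \cap S$ then $P_\alpha x_0 \setminus B x_0$ is a union of exactly two $B$-stable prime divisors $D'_\alpha, D''_\alpha$, giving axioms (A1) and (A3); pairing with weights in $\Mm$ and using that $\langle \check{\alpha}, \chi \rangle = \nu_{D'_\alpha}(f_\chi) + \nu_{D''_\alpha}(f_\chi)$ for $\alpha \in \Sigma \cap S$ gives (A2). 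Axioms ($\Sigma 1$) and ($\Sigma 2$) come from Brion's theorem that $\Vm$ is a simplicial cone cut out by a finite reflection group acting on $\Nm_\Q$, together with the classification of rank-one cases in Table \ref{tab:sr}. Axiom (S) follows from the identification of $S^p$ with the simple roots of $P$, combined with the rank-one compatibility entries of Table \ref{tab:sr} applied to each $\gamma \in \Sigma$ via the localization of $G/H$ at $\gamma$.

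For (b), the fact that the Luna datum is a conjugation invariant is immediate from the definition, since conjugation of $H$ corresponds to translating the base point of $G/H$ and pulls back $B$-semi-invariants, colors, and valuations canonically. Injectivity of the induced map on conjugacy classes is precisely Losev's uniqueness theorem (\cite{Losev:Unique}), and I would simply cite this; its proof ultimately reconstructs $G/H$ from $\Sm$ via embeddings of homogeneous spaces into wonderful varieties over a suitable base and arguments with invariant Hilbert schemes.

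For (c), surjectivity is the Luna conjecture itself, and this is the main obstacle of the whole proposition. The plan is to first reduce to the wonderful case $\Mm = \lspan_\Z \Sigma$: a general Luna datum $(\Mm, \Sigma, S^p, \Dm^a)$ determines a wonderful datum by replacing $\Mm$ with $\lspan_\Z \Sigma$, and the original $H$ is recovered from the wonderful subgroup by quotienting its normalizer by a subtorus cut out by $\Mm$. In the wonderful case one must construct, for every spherical system $(\Sigma, S^p, \Dm^a)$, a wonderful $G$-variety realizing it. This is the content of \cite{BP:PrimitiveWonderful}, which decomposes the problem into a finite list of primitive spherical systems and constructs each one geometrically, and of \cite{CF:WonderfulVar}, which gives an alternative construction via invariant Hilbert schemes. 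For this paper I would invoke these references rather than reproduce the construction, as doing so would require recalling the entire theory of primitive spherical systems and would dwarf the rest of the article.
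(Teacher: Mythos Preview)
Your proposal is correct in substance, but note that the paper does not give its own proof of this proposition at all: it is stated as a citation of \cite[Th\'eor\`eme~2]{Luna:TypeA} and \cite[Theorem~30.22]{Timashev:HSEE}, with the Luna conjecture attributed in the introduction to \cite{BP:PrimitiveWonderful,CF:WonderfulVar,Losev:Unique}. Your sketch of the three pieces (axioms, Losev uniqueness, Bravi--Pezzini/Cupit-Foutou existence) is an accurate summary of how the cited literature assembles the result, and it invokes exactly the same references the paper relies on, so there is no discrepancy to discuss.
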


\begin{rem}
  Given two Luna data $\Sm_i = ( \Mm_i, \Sigma_i, S^p_i, \Dm^a_i )$ for $i = 1, 2$, all combinatorial objects, except $\Dm^a_i$, are comparable in an obvious way. By writing $\Sm_1 = \Sm_2$, we mean that all these objects coincide and that there is a bijection $\iota \colon \Dm^a_1 \to \Dm^a_2$ such that $\rho_1(D) = \rho_2 \circ \iota( D )$ for every $D \in \Dm^a_1$.
\end{rem}

The full set of colors $\Dm$ may be recovered from a quadruple $(\Mm, \Sigma, S^p, \Dm^a)$ (see, for instance, \cite[Section 2]{GH:HSDose}).

We will also need the following result by Foschi (see also {\cite[Lemma~30.24]{Timashev:HSEE}}).

\begin{prop}[{\cite[Section~2.2, Theorem~2.2]{foschi}}]
  \label{prop:foschi}
  Let $D \in \Dm$ be a color and let $L$ be a $G$-linearized line bundle on $G/H$ with a section $s \in H^0(G/H, L)$ such that $\Div s = D$. Then the section $s$ is $B$-semi-invariant of some weight $\lambda \in \X(B)$, and for $\alpha \in S$ we have
  \[
    \langle \alpha^\vee, \lambda \rangle =
    \begin{cases}
      1 & \text{ if $D \in \Dm(\alpha)$ and $D \in \Dm^a \cup \Dm^b$,} \\
      2 & \text{ if $D \in \Dm(\alpha)$ and $D \in \Dm^{2a}$,}\\
      0 & \text{ if $D \notin \Dm(\alpha)$.}
    \end{cases}
  \]
\end{prop}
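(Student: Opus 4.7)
The plan is to first establish $B$-semi-invariance of $s$ from the $B$-stability of $D$, and then to analyse $\langle \alpha^\vee, \lambda \rangle$ by studying the $P_\alpha$-submodule of $H^0(G/H, L)$ generated by $s$.

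Since $D$ is a color, it is $B$-stable. For every $b \in B$ the translate $b \cdot s$ is a global section of $L$ with divisor $b \cdot D = D$, so $b \cdot s = \lambda(b) s$ for a scalar, defining a character $\lambda \in \X(B)$. If $D \notin \Dm(\alpha)$, then $P_\alpha \cdot D = D$, and applying the same argument with $P_\alpha$ in place of $B$ produces a character of $P_\alpha$ extending $\lambda$. Since $\X(P_\alpha)$ is precisely the sublattice of $\X(B)$ orthogonal to $\alpha^\vee$, we deduce $\langle \alpha^\vee, \lambda \rangle = 0$.

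Now assume $D \in \Dm(\alpha)$ and set $V \coloneqq \lspan(P_\alpha \cdot s) \subseteq H^0(G/H, L)$, a rational $P_\alpha$-submodule. The vector $s$ is a $B$-highest weight vector: it is a $T$-eigenvector of weight $\lambda$, fixed by the unipotent radical $U$ of $B$ (as $\lambda$ is trivial on $U$). Moreover, the unipotent radical $R_u(P_\alpha)$ is a normal subgroup of $P_\alpha$ contained in $U$, hence it fixes $s$ and so acts trivially on all of $V = \lspan(P_\alpha \cdot s)$. Consequently $V$ descends to a module for the Levi $L_\alpha$ of $P_\alpha$; being generated by a highest weight vector of weight $\lambda$ it is the irreducible $L_\alpha$-module of that highest weight, of dimension $n + 1$ where $n \coloneqq \langle \alpha^\vee, \lambda \rangle$. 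The hypothesis $D \in \Dm(\alpha)$ furnishes some $p \in P_\alpha$ with $p \cdot D \neq D$; then $p \cdot s$ is a section with divisor different from $D$, so $p \cdot s \notin \C s$ and hence $n \ge 1$.

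It remains to pin down $n$ depending on the type of $D$. The Luna--Vust local structure theorem supplies a $B$-stable affine neighborhood of the generic point of $D$ on which the $L_\alpha$-action is modelled by a rank-one spherical $L_\alpha$-variety, and the classification of such varieties (essentially by Akhiezer and Brion, and visible in Table~\ref{tab:sr}) pins down the possibilities: the case $\alpha \in \Sigma$ yields two distinct colors in $\Dm(\alpha)$ with $V$ two-dimensional, hence $n = 1$ (type $a$); the case $\alpha \notin \Sigma$ and $2\alpha \notin \Sigma$ yields a single color with $V$ two-dimensional, hence $n = 1$ (type $b$); and the case $2\alpha \in \Sigma$ yields a single color with $V$ the three-dimensional irreducible $\SL_2$-representation, hence $n = 2$ (type $2a$). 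Matching these three cases to the statement of the proposition completes the proof. I expect this last step, the combinatorial identification of the integer $n$ extracted from the $P_\alpha$-representation $V$ with the a priori unrelated type label $a$, $2a$, or $b$ of $D$, to be the main obstacle: it genuinely needs both the reduction to rank one via the local structure theorem and the rank-one classification, whereas everything preceding it reduces to clean representation theory of the minimal parabolic $P_\alpha$.
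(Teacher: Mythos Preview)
The paper does not supply its own proof of this proposition; it is quoted from Foschi's thesis (with \cite[Lemma~30.24]{Timashev:HSEE} as a secondary reference), so there is no in-paper argument to compare against.

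Your reconstruction is the standard one and is correct. The $B$-semi-invariance of $s$ and the case $D \notin \Dm(\alpha)$ are handled cleanly. For $D \in \Dm(\alpha)$, the identification of $\langle \alpha^\vee, \lambda \rangle$ with $\dim V - 1$ for $V = \lspan(P_\alpha \cdot s)$ is valid; the only point worth one extra line is the irreducibility of $V$: a finite-dimensional rational module for a reductive group in characteristic zero generated by a single highest-weight vector is necessarily irreducible (complete reducibility forces all summands to share the highest weight, and then cyclicity forces a single summand). The final step---reading off $\dim V$ from the rank-one local model---is indeed where the content lies, and your outline is right. It becomes explicit once one records that on the $L_\alpha$-slice $Z$ furnished by the local structure theorem the action of $[L_\alpha,L_\alpha]$ factors through $\SL_2$ with generic isotropy conjugate to $T$, $N_{\SL_2}(T)$, or a subgroup containing $U$, according as $\alpha \in \Sigma$, $2\alpha \in \Sigma$, or neither; the equation of the color restricted to $Z$ is then respectively a linear, quadratic, or linear form in the standard two-dimensional $\SL_2$-representation, giving $\dim V = 2,\,3,\,2$.
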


\subsection{Luna data of Normalizers}
\label{sec:homog-spher-data-1}

Let $H \subseteq G$ be a spherical subgroup with Luna datum $\Sm = ( \Mm, \Sigma, S^p, \Dm^a )$. We explain how to compute the Luna datum $N_G(\Sm) = ( \Mm^N, \Sigma^N, S^p, \Dm^{a,N} )$ of the normalizer $N \coloneqq N_G(H)$ from $\Sm$. Recall that the (abstract) set $\Dm^{a,N}$ is equipped with two maps $\rho^N \colon \Dm^{a,N} \to \Hom_\Z( \Mm^N, \Z )$ and $\varsigma^N \colon \Dm^{a,N} \to \Pm(S)$.

\begin{defn}[{\cite[Definition 4.1.1]{Losev:Unique}}]
  \label{def:distinguished-sr}
  An element $\gamma \in \Sigma$ is called \emph{distinguished} if
  \begin{enumerate}
  \item $\gamma \in S$ and $\rho(D) = \tfrac{1}{2} \check{\gamma}|_\Mm$ for every $D \in \Dm(\gamma)$ (such colors are called \emph{undetermined}), or
  \item $\gamma \in \X(R) \setminus S$ such that $2\gamma$ is also a spherical root of $G$ compatible with $S^p$.
  \end{enumerate}
  The subset of distinguished roots of $\Sigma$ is denoted by $\Sigma^+$.
\end{defn}

\begin{prop}[{\cite[Theorem 2]{Losev:Unique}}]
  We have
  \[
    \Sigma^N = \rleft( \rleft( \Sigma \cap \X(R) \rright) \setminus \Sigma^+ \rright) \cup \set{ 2 \gamma \with \gamma \in \Sigma^+ \cup \rleft( \Sigma \setminus \X(R) \rright) } \text{,}
  \]
  i.e., a spherical root $\gamma \in \Sigma$ which is either distinguished or not in the root lattice $\X(R)$ gets doubled. All other roots stay the same.
\end{prop}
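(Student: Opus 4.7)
The plan is to identify $\Sm^N$ via the correspondence of Theorem \ref{thm:corr-subdata-subgrp} applied to $H \subseteq N$. First I would use the fact that $N/H \cong \Aut^G(G/H)$ is a diagonalizable group acting on each one-dimensional weight space $\C f_\chi$ ($\chi \in \Mm$) by a character. This yields a homomorphism $\varphi \colon \Mm \to \X(N/H)$ whose kernel equals $\Mm^N$. Since $G$-invariant valuations on $G/H$ push forward to $G$-invariant valuations on $G/N$, the image of $\Vm$ under the dual map $\Nm_\Q \to \Nm^N_\Q$ is $\Vm^N$; hence the extremal rays of $\cone(\Sigma^N)$ are the images of those of $\cone(\Sigma)$, and $\Sigma^N \subseteq \Q_{>0} \cdot \Sigma$. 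For each $\gamma \in \Sigma$ there is therefore a unique positive integer $c_\gamma$ with $c_\gamma \gamma \in \Sigma^N$, and the claim reduces to showing $c_\gamma \in \set{1,2}$ with $c_\gamma = 2$ exactly when $\gamma \in \Sigma^+ \cup (\Sigma \setminus \X(R))$.

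I would handle the three doubling cases separately. If $\gamma \notin \X(R)$, Table \ref{tab:sr} shows $2\gamma \in \Sigma_G$, and the rank-one analysis via Proposition \ref{prop:foschi} exhibits a $\Z/2$-automorphism of $G/H$ acting by $-1$ on $f_\gamma$, so $\varphi(\gamma) \neq 0$ while $\varphi(2\gamma) = 0$, giving $c_\gamma = 2$. If $\gamma \in \Sigma^+$ is of type (1) in Definition \ref{def:distinguished-sr}, the two undetermined colors in $\Dm(\gamma)$ are swapped by an element of $\Aut^G(G/H)$ that acts by $-1$ on $f_\gamma$, again giving $c_\gamma = 2$. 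If $\gamma \in \Sigma^+$ is of type (2), the compatibility of $(S^p, 2\gamma)$ guarantees via Proposition \ref{prop:clc} the existence of a spherical subgroup with Luna datum $(\Z 2\gamma, \set{2\gamma}, S^p, \emptyset)$; this subgroup contains $H$ and is contained in $N$ by combinatorial comparison of data, forcing $2\gamma \in \Mm^N$ while $\gamma \notin \Mm^N$.

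The converse---showing $c_\gamma = 1$ for $\gamma \in \Sigma \cap \X(R)$ not distinguished---is the main obstacle. Here I would argue by contradiction: if $c_\gamma \ge 2$, then $\varphi(\gamma)$ is a nontrivial character, so there is a nontrivial $\sigma \in \Aut^G(G/H)$ acting by a nontrivial scalar on $f_\gamma$. Tracking $\sigma$'s action on $\Dm(\gamma)$ and on neighbouring colors via the Luna axioms $(\operatorname{A1})$--$(\operatorname{S})$, one would show $\sigma$ either swaps two undetermined colors above $\gamma \in S$ (contradicting the failure of type~(1)) or descends to a $G$-equivariant rank-one situation with spherical root $2\gamma$ compatible with $S^p$ (contradicting the failure of type~(2)). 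The verification of this dichotomy, case-by-case against Table \ref{tab:sr} and the constraints of Definition \ref{def:hsd}, forms the technical heart of the argument and mirrors the type-by-type analysis in Losev's original proof.
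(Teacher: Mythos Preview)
The paper does not prove this proposition at all: it is quoted verbatim from \cite[Theorem 2]{Losev:Unique} and used as an external input in Section~\ref{sec:homog-spher-data-1}. In particular, the description of $\Sigma^N$ feeds into Proposition~\ref{thm:hsd-normalizer}, which is then invoked in Section~\ref{sec:proof-prop-bfq} to prove Theorem~\ref{thm:bijection-finite-quot}, and the latter is the engine behind Theorem~\ref{thm:corr-subdata-subgrp}. So your opening sentence, ``identify $\Sm^N$ via the correspondence of Theorem~\ref{thm:corr-subdata-subgrp} applied to $H \subseteq N$'', is circular within the logical architecture of this paper: you cannot appeal to Theorem~\ref{thm:corr-subdata-subgrp} to establish a statement on which that theorem ultimately rests.

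Setting aside the circular framing, the remainder of your sketch is an independent outline in the spirit of Losev's original argument rather than anything the present paper supplies. The ingredients you list---$N/H \cong \Aut^G(G/H)$ acting on the weight spaces $\C f_\chi$, the resulting homomorphism $\Mm \to \X(N/H)$ with kernel $\Mm^N$, the push-forward of $\Vm$ onto $\Vm^N$, and a type-by-type analysis against Table~\ref{tab:sr}---are the right ones, and your three doubling cases match Definition~\ref{def:distinguished-sr}. The delicate part, as you acknowledge, is the converse ($c_\gamma = 1$ for non-distinguished $\gamma \in \Sigma \cap \X(R)$), and your contradiction argument is only a gesture: the claim that a nontrivial $\sigma$ must either swap undetermined colors or ``descend to a rank-one situation'' is exactly where Losev's proof does real work, and you have not supplied that work here. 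If your aim is to reprove Losev's theorem rather than cite it, drop the reference to Theorem~\ref{thm:corr-subdata-subgrp} and flesh out the dichotomy in the converse; but within this paper the intended move is simply to cite \cite{Losev:Unique}.
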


\begin{prop}[{\cite[Corollary 6.5]{Knop:Auto}}]
  We have $\Mm^N = \lspan_\Z \Sigma^N$. In particular, $\Mm^N$ is contained in $\Mm$.
\end{prop}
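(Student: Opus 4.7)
The second assertion $\Mm^N \subseteq \Mm$ is essentially formal: the equivariant surjection $G/H \twoheadrightarrow G/N$ induces an injection $\C(G/N)^{(B)} \hookrightarrow \C(G/H)^{(B)}$ that preserves $B$-weights, so $\Mm^N$ embeds canonically as a sublattice of $\Mm$. The substantive task is to establish $\Mm^N = \lspan_\Z \Sigma^N$, of which the inclusion $\lspan_\Z \Sigma^N \subseteq \Mm^N$ is tautological.

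First I would verify that $\lspan_\Z \Sigma^N$ has full rank inside $\Mm^N$. By the preceding proposition one has $|\Sigma^N| = |\Sigma|$, and $\Sigma^N$ is linearly independent by the Luna axioms. To see that $\rk \Mm^N = |\Sigma|$, I would combine two standard inputs: first, the $N/H$-action on $\C(G/H)^{(B)}$ realises $\Mm/\Mm^N$ as the character lattice $\X(N/H)$, giving $\rk \Mm^N = \rk \Mm - \dim N/H$; second, by Brion's analysis of the valuation cone \cite{Brion:GenEspSym}, $\dim N/H$ coincides with the dimension of the lineality space $\Vm \cap (-\Vm)$, which equals $\rk \Mm - |\Sigma|$ since $\Sigma$ is linearly independent. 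Together these give that $\Sigma^N$ forms a $\Q$-basis of $\Mm^N_\Q$.

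The remaining and principal step is to rule out a nontrivial finite index, i.e.\ to show $\Mm^N/\lspan_\Z \Sigma^N = 0$. Equivalently, one needs $G/N$ to be \emph{wonderful}, in which case the weight lattice is tautologically $\Z$-spanned by the spherical roots (the $T$-weights of the normal spaces at the closed $G$-orbit of the wonderful completion form a basis of $\Mm^N$). Wonderfulness of $G/N$ is equivalent to the valuation cone $\Vm^N$ being strictly convex, which in turn is equivalent to the self-normalizing property $N_G(N) = N$. This self-normalizing property is the key geometric input and the main obstacle: it is not a purely combinatorial statement but reflects the existence of a wonderful completion of $G/N$, and is established in \cite{Knop:Auto} via the geometry of that completion. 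An alternative, more combinatorial route would be to compute $\X(N/H)$ directly from $\Sm$ and match $|\Mm/\lspan_\Z \Sigma^N|$ with $|\X(N/H)|$, guided by the explicit formula for $\Sigma^N$ in the preceding proposition; but this still relies on an independent handle on $N/H$.
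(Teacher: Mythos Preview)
The paper does not supply a proof of this proposition; it is quoted verbatim from \cite[Corollary~6.5]{Knop:Auto}. Your outline is therefore an attempt to reconstruct Knop's argument, and the first three steps (the inclusion $\Mm^N\subseteq\Mm$, the tautological inclusion $\lspan_\Z\Sigma^N\subseteq\Mm^N$, and the rank computation via $\X(N/H)$ together with Brion's identification of $\dim N/H$ with the lineality dimension of $\Vm$) are essentially correct and in the spirit of \cite{Knop:Auto}.

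The gap is in step~4. Your chain of equivalences
\[
\text{$G/N$ wonderful} \;\Longleftrightarrow\; \Vm^N \text{ strictly convex} \;\Longleftrightarrow\; N_G(N)=N
\]
is not valid as stated. Strict convexity of $\Vm^N$ is exactly the statement $|\Sigma^N|=\rk\Mm^N$, which you have already established in step~3; it says nothing about the torsion of $\Mm^N/\lspan_\Z\Sigma^N$, so it cannot by itself yield wonderfulness in the sense $\Mm^N=\lspan_\Z\Sigma^N$. Likewise, strict convexity of $\Vm^N$ is only equivalent to $N_G(N)/N$ being \emph{finite}, not to $N_G(N)=N$. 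In fact, the three displayed conditions \emph{are} mutually equivalent for the normalizer of a spherical subgroup, but that equivalence is precisely the content of the proposition and of the surrounding results in \cite{Knop:Auto}; you cannot invoke it to prove the proposition. What Knop actually does is construct a canonical toroidal compactification of $G/N$ and read off from its geometry that the $T$-weights along the closed orbit already $\Z$-span $\Mm^N$; the self-normalizing property of $N$ is a consequence rather than an input. Your parenthetical remark about normal weights at the closed orbit points in the right direction, but it presupposes the existence and smoothness of a wonderful completion, which is again part of what must be proved.
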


Recall that $N$ acts on $G/H$ by translation from the right, i.e., for $n \in N$ and $gH \in G/H$, we have $n \cdot gH = gnH$. The quotient $(G/H)/N$ under this action coincides with $G/N$. By \cite[Lemma 3.1.5]{Losev:Unique}, the natural projection $\pi \colon G/H \twoheadrightarrow G/N$ induces a map $D \mapsto \pi(D)$ from $\Dm$, the colors of $G/H$, onto $\Dm^N$, the colors of $G/N$, and this is the quotient map for the action of $N$ on $\Dm$. Moreover, $\varsigma^N(\pi(D)) = \varsigma(D)$ and $\rho^N(\pi(D)) = \rho(D)|_{\Mm^N}$ for every color $D \in \Dm$. From this observation, the following two statements directly follow.

\begin{prop}
  The stabilizer of the open $B$-orbit in $G/N$ coincides with the stabilizer of the open $B$-orbit in $G/H$, i.e., $S^p$ stays the same.
\end{prop}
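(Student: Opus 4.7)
The plan is to reduce the claim to a comparison of color sets via the standard characterization $S^p = \set{ \alpha \in S \with \Dm(\alpha) = \emptyset }$. First I would establish (or recall) this color-theoretic description of $S^p$: since $P$ is the stabilizer of the open $B$-orbit $\Omega \subseteq G/H$, we have $\alpha \in S^p$ iff $P_\alpha \subseteq P$ iff $P_\alpha \cdot \Omega = \Omega$. Because the complement of $\Omega$ in $G/H$ is the union of the closures of the colors, $P_\alpha$ stabilizes $\Omega$ exactly when it fixes every color, i.e., $\Dm(\alpha) = \emptyset$. The same argument applied to $G/N$ yields $S^{p,N} = \set{ \alpha \in S \with \Dm^N(\alpha) = \emptyset }$.

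The second and decisive ingredient is already recorded in the paragraph immediately preceding the statement: the projection $\pi \colon G/H \twoheadrightarrow G/N$ induces a surjection $\Dm \twoheadrightarrow \Dm^N$, $D \mapsto \pi(D)$, and satisfies $\varsigma^N(\pi(D)) = \varsigma(D)$ for every color $D \in \Dm$. Consequently $\pi$ restricts to a surjection $\Dm(\alpha) \twoheadrightarrow \Dm^N(\alpha)$ for every simple root $\alpha \in S$, so $\Dm(\alpha) = \emptyset$ if and only if $\Dm^N(\alpha) = \emptyset$. Combining with the first step one immediately concludes $S^p = S^{p,N}$, which is exactly the claim.

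I do not foresee a substantive obstacle; both ingredients are either standard in the theory of spherical varieties or already cited in the paper. The only delicate point is verifying the color-theoretic description of $S^p$ if it has not been stated earlier, but this is a routine consequence of the definition of $\Dm(\alpha)$ together with the fact that the colors exhaust the codimension-one complement of the open $B$-orbit. A slightly more hands-on alternative, should one wish to avoid invoking this description, is to observe directly that $G$-equivariance of $\pi$ forces $P \subseteq P^N$, and then use the surjection $\Dm(\alpha) \twoheadrightarrow \Dm^N(\alpha)$ to show that every minimal parabolic $P_\alpha$ contained in $P^N$ is already contained in $P$, yielding the reverse inclusion $P^N \subseteq P$.
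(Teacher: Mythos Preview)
Your argument is correct and is precisely the deduction the paper has in mind: the paper simply asserts that the proposition follows directly from the surjection $\Dm \twoheadrightarrow \Dm^N$ together with $\varsigma^N(\pi(D)) = \varsigma(D)$, and you have spelled out exactly how, via the standard characterization $S^p = \set{\alpha \in S \with \Dm(\alpha) = \emptyset}$.
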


\begin{prop}
  We may identify (the abstract set) $\Dm^{a,N}$ with $\bigcup_{\alpha \in \Sigma^N \cap S} \Dm(\alpha)$ such that $\rho^N(D) = \rho(D)|_{\Mm^N}$ for every $D \in \Dm^{a,N}$.
\end{prop}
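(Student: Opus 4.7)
The plan is to leverage the quotient map $\pi \colon \Dm \twoheadrightarrow \Dm^N$ furnished by the preceding two propositions, which satisfies $\varsigma^N \circ \pi = \varsigma$ and $\rho^N(\pi(D)) = \rho(D)|_{\Mm^N}$. The second of these identities is already exactly the compatibility of $\rho$-maps claimed in the statement, so the proposition reduces to showing that the restriction of $\pi$ to $\bigcup_{\alpha \in \Sigma^N \cap S} \Dm(\alpha)$ is a bijection onto $\Dm^{a,N}$.

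Surjectivity is essentially a bookkeeping step: axiom $(\operatorname{A3})$ applied to the Luna datum $N_G(\Sm)$ yields $\Dm^{a,N} = \bigcup_{\alpha \in \Sigma^N \cap S} \Dm^N(\alpha)$, and the identity $\varsigma^N \circ \pi = \varsigma$ immediately implies $\pi(\Dm(\alpha)) = \Dm^N(\alpha)$ for every simple root $\alpha$. For injectivity, my plan is to reduce to a local bijection on each $\Dm(\alpha)$ individually, and for this I first need to check that $\Sigma^N \cap S \subseteq \Sigma \cap S$. By the formula for $\Sigma^N$ recalled above, any $\alpha \in \Sigma^N \cap S$ either already lies in $(\Sigma \cap \X(R)) \setminus \Sigma^+ \subseteq \Sigma$, or equals $2\gamma$ for some $\gamma \in \Sigma^+ \cup (\Sigma \setminus \X(R))$; in the latter case $\tfrac{1}{2}\alpha$ would be a spherical root of $G$, and a glance at Table \ref{tab:sr} rules this out, since the only spherical roots of $G$ supported on a single $\mathsf{A}_1$ node are $\alpha_1$ and $2\alpha_1$. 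Hence $\alpha \in \Sigma \cap S$, and applying axiom $(\operatorname{A3})$ on both sides yields $|\Dm(\alpha)| = |\Dm^N(\alpha)| = 2$, so the already-surjective map $\pi|_{\Dm(\alpha)}$ is in fact a bijection.

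Global injectivity now follows formally: if $\pi(D_1) = \pi(D_2)$ with $D_1, D_2 \in \bigcup_{\alpha \in \Sigma^N \cap S} \Dm(\alpha)$, then $\varsigma(D_1) = \varsigma^N(\pi(D_1)) = \varsigma^N(\pi(D_2)) = \varsigma(D_2)$, so both colors lie in the same $\Dm(\alpha)$ for some $\alpha \in \Sigma^N \cap S$, and the local bijection forces $D_1 = D_2$. The only non-formal ingredient in this sketch is the Table \ref{tab:sr} lookup ruling out $\tfrac{1}{2}\alpha$ as a spherical root of $G$; everything else is a direct manipulation of the data supplied by the two preceding propositions, so I do not anticipate any substantive obstacle.
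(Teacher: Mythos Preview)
Your argument is correct and is precisely a fleshed-out version of what the paper asserts: the paper does not write out a proof at all, merely stating that the proposition ``directly follows'' from the properties $\varsigma^N \circ \pi = \varsigma$ and $\rho^N(\pi(D)) = \rho(D)|_{\Mm^N}$ of the quotient map $\pi \colon \Dm \twoheadrightarrow \Dm^N$. Your verification that $\Sigma^N \cap S \subseteq \Sigma \cap S$ via the Table~\ref{tab:sr} lookup, and the cardinality count $|\Dm(\alpha)| = |\Dm^N(\alpha)| = 2$ forcing the local bijection, are exactly the details one would supply to justify that claim.
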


In summary, we obtain (see also \cite[Section 2.4]{BP:WonderfulSubgrp}).

\begin{prop}
  \label{thm:hsd-normalizer}
  Let $H \subseteq G$ be a spherical subgroup. Then $N_G(H) \subseteq G$ is also spherical with Luna datum $N_G(\Sm) = ( \Mm^N, \Sigma^N, S^p, \Dm^{a,N} )$ where
  \begin{enumerate}
  \item $\Mm^N = \lspan_\Z \Sigma^N$,
  \item $\Sigma^N = \rleft( \rleft( \Sigma \cap \X(R) \rright) \setminus \Sigma^+ \rright) \cup \set{ 2 \gamma \with \gamma \in \Sigma^+ \cup \rleft( \Sigma \setminus \X(R) \rright) }$,
  \item $\Dm^{a,N} = \bigcup_{\alpha \in \Sigma^N \cap S} \Dm(\alpha)$ with $\rho^N(D) = \rho(D)|_{\Mm^N}$ for $D \in \Dm^{a,N}$.
  \end{enumerate}
\end{prop}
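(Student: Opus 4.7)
The plan is to read off the Luna datum of $N \coloneqq N_G(H)$ by stitching together the three preceding propositions. First I would note that the sphericity of $N$ is automatic: since $H \subseteq N$, the natural projection $\pi \colon G/H \twoheadrightarrow G/N$ sends the open $B$-orbit in $G/H$ onto an open subset of $G/N$, which must therefore contain (and hence coincide with) the open $B$-orbit in $G/N$.

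For item (2), the formula for $\Sigma^N$ is exactly the content of the cited theorem of Losev. For item (1), the identity $\Mm^N = \lspan_\Z \Sigma^N$ is precisely Knop's corollary quoted above (which in turn relies on $N/H$ being diagonalizable, so that the valuation cone of $G/N$ is all of $\Nm^N_\Q$). The claim that $S^p$ is unchanged is the proposition immediately preceding, and it follows from the fact that the stabilizer of the open $B$-orbit in $G/N$ is the image of the stabilizer of the open $B$-orbit in $G/H$ under the map $P \mapsto P \cdot N$, which is a parabolic of the same type.

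Finally, item (3) is extracted from the discussion of $\pi$ just above the statement: by Losev's Lemma 3.1.5, the map $D \mapsto \pi(D)$ realizes the set $\Dm^N$ of colors of $G/N$ as the $N$-quotient of the set $\Dm$ of colors of $G/H$, and it satisfies $\varsigma^N(\pi(D)) = \varsigma(D)$ together with $\rho^N(\pi(D)) = \rho(D)|_{\Mm^N}$. Since $\Dm^{a,N}$ is by construction $\bigcup_{\alpha \in \Sigma^N \cap S} \Dm^N(\alpha)$ and $\Dm^N(\alpha) = \pi(\Dm(\alpha))$, we may identify $\Dm^{a,N}$ with $\bigcup_{\alpha \in \Sigma^N \cap S} \Dm(\alpha)$, the map $\rho^N$ being given by restriction of $\rho$ to $\Mm^N$ as claimed.

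The substantive content of the proposition is really Losev's description of $\Sigma^N$, which identifies exactly those spherical roots (the distinguished ones and those outside $\X(R)$) that get doubled upon passing to the normalizer; the remaining ingredients are formal, and the only task here is to assemble them into a single clean statement.
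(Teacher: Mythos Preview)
Your proposal is correct and follows the paper's approach exactly: the proposition is presented in the paper as a summary (``In summary, we obtain'') of the preceding propositions, and you have correctly assembled those pieces. One small caveat: your parenthetical explanation that $\Mm^N = \lspan_\Z \Sigma^N$ holds because ``the valuation cone of $G/N$ is all of $\Nm^N_\Q$'' is not right (that would force $\Sigma^N = \emptyset$); the actual reason is that $N$ is self-normalizing, but since you are merely citing Knop's result this slip does not affect the argument.
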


We will also need the following observation.
\begin{rem}
  \label{rem:normalizer-finite-incl}
  Let $H, \Ht \subseteq G$ be two spherical subgroups with $H \subseteq \Ht$ such that the quotient $\Ht/H$ is finite. Then $H_0 \coloneqq H^\circ = \Ht^\circ$ and thus, by \cite[Lemma 3.1.1]{Losev:Unique} , $N \coloneqq N_G(H_0) = N_G(H) = N_G(\Ht)$. In particular, we obtain a sequence of inclusions $H_0 \subseteq H \subseteq \Ht \subseteq N$.
\end{rem}

\section{Overview of the Proof of our Main Theorems}
\label{sec:outlook}

Let $H \subseteq G$ be a spherical subgroup. Fix a Borel subgroup $B \subseteq G$ and a maximal torus $T \subseteq B$. We write $R$ for the corresponding root system, $S$ for the induced set of simple roots and $\Sm = ( \Mm, \Sigma, S^p, \Dm^a )$ for the Luna datum of $H$.

Let us first outline the strategy of the proof of Theorem \ref{thm:corrsp-dist-pairs-subgrps}. Our approach to study $\Hm_H$ depends on the following kind of Stein factorization.
\begin{prop}
  \label{prop:divide-prob}
  Let $\Ht \in \Hm_H$. We define $\Ht_0 \coloneqq H \Ht^\circ$ which is also in $\Hm_H$ such that $\Ht_0/H$ is connected and $\Ht/\Ht_0$ is finite.
\end{prop}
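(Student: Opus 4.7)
The plan is to verify four things in sequence: that $\Ht_0 \coloneqq H\Ht^\circ$ is a closed subgroup of $G$, that $\Ht_0$ is spherical (and therefore lies in $\Hm_H$), that $\Ht_0/H$ is connected, and that $\Ht/\Ht_0$ is finite. None of these is genuinely hard; the content of the proposition is really to package elementary facts about identity components into the Stein-type factorization that will be used to reduce the proof of Theorem \ref{thm:corrsp-dist-pairs-subgrps} to the co-connected case treated by Knop.

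First I would handle the algebraic-group bookkeeping. The identity component $\Ht^\circ$ is a closed normal subgroup of $\Ht$ of finite index, so $\Ht_0 = H \Ht^\circ = \Ht^\circ H$ is a subgroup of $\Ht$. It is closed in $G$ because $\Ht_0/\Ht^\circ$ is the (finite) image of $H$ in the finite component group $\Ht/\Ht^\circ$, so $\Ht_0$ is a finite union of translates $h\Ht^\circ$ ($h \in H$), each of which is closed. For sphericity I would use that the inclusion $H \subseteq \Ht_0$ induces a $G$-equivariant surjection $G/H \twoheadrightarrow G/\Ht_0$; the image of the open $B$-orbit in $G/H$ is a $B$-stable constructible set whose closure is all of $G/\Ht_0$, so $G/\Ht_0$ has an open $B$-orbit and hence $\Ht_0 \in \Hm_H$.

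For the connectedness of $\Ht_0/H$, I would rewrite
\[
  \Ht_0/H = (\Ht^\circ H)/H \cong \Ht^\circ / (\Ht^\circ \cap H)
\]
and invoke the standard fact that a quotient of a connected algebraic group by a closed subgroup is connected. For the finiteness of $\Ht/\Ht_0$, the chain $\Ht^\circ \subseteq \Ht_0 \subseteq \Ht$ induces a surjection from the finite component group $\Ht/\Ht^\circ$ onto $\Ht/\Ht_0$, so $\Ht/\Ht_0$ is finite.

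The only point worth flagging as a potential obstacle is the closedness of $\Ht_0$, which is automatic once one observes that $\Ht_0/\Ht^\circ$ is the finite subgroup of $\Ht/\Ht^\circ$ generated by the image of $H$; the remaining verifications are immediate from the definitions and the elementary properties of $\Ht^\circ$ recalled above (and also used in Remark \ref{rem:normalizer-finite-incl}).
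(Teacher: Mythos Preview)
Your argument is correct. The paper actually states Proposition~\ref{prop:divide-prob} without proof, treating it as an elementary observation that motivates the subsequent strategy; so there is no proof in the paper to compare against. Your four-step verification (closedness via the finite component group, sphericity via the equivariant surjection, connectedness of $\Ht_0/H \cong \Ht^\circ/(\Ht^\circ\cap H)$, finiteness of $\Ht/\Ht_0$ as a quotient of $\Ht/\Ht^\circ$) is exactly the standard unpacking one would expect, and each step is sound.

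One small stylistic remark: for the sphericity step you can be more direct than the constructible-closure argument. The image of the open $B$-orbit under the $G$-equivariant map $G/H \to G/\Ht_0$ is a single $B$-orbit; it is dense, and orbits are locally closed, hence it is open. Alternatively, the paper itself later uses (in the proof of Proposition~\ref{prop:Sp}) that such quotient maps between homogeneous spaces are open, which gives the same conclusion in one line. Either way your conclusion stands.
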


In particular, the study of $\Ht \in \Hm_H$ can be split in the two cases: $\Ht_0 \in \Hm_H^{\mathsf{c}}$ and $\Ht \in \Hm_{\Ht_0}^{\mathsf{f}}$ where
\[
  \Hm_{\Ht_0}^f \coloneqq \set{ H' \subseteq G \; \text{closed subgroup such that} \; \Ht_0 \subseteq H' \; \text{and} \; H'/\Ht_0 \; \text{is finite} } \text{.}
\]

The description of $\Hm_H^{\mathsf{c}}$ is in terms of colored subspaces as explained in Section \ref{sec:prelim} (see \cite[Section 4]{Knop:LV}). It remains to describe the set $\Hm_{\Ht_0}^{\mathsf{f}}$. To keep the notation simple let us consider the set $\Hm_H^{\mathsf{f}}$ instead.

\begin{defn}
  \label{def:subdatum-fc}
  A subgroup $\Mmt \subseteq \Mm$ of finite index is called \emph{distinguished}, if $( \Mmt, \emptyset )$ is a distinguished pair. Let us write $\Ss^f \Sm$ (resp.~$\Ss^c \Sm$) for the set of homogeneous subdata coming from distinguished subgroups (resp.~from colored subspaces) (cf.~Definition \ref{def:subdatum}).
\end{defn}

\begin{rem}
  Observe that $\Ss^f\Sm$ can be naturally identified with the set of distinguished subgroups $\Mmt \subseteq \Mm$.
\end{rem}

We obtain the following combinatorial description of $\Hm_H^{\mathsf{f}}$ which will be proved in Section \ref{sec:proof-prop-bfq}.

\begin{thm}
  \label{thm:bijection-finite-quot}
  There is a representative $\Ht$ of $\Smt = ( \Mmt, \Sigmat, \St^p, \Dmt^a ) \in \Ss^f \Sm$ which lies in $\Hm_H^f$ and the assignment $\Smt \mapsto \Ht$ induces an order-reversing bijection $\Ss^f \Sm \to \Hm_H^f$.  
\end{thm}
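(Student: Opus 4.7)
My plan is to use Remark \ref{rem:normalizer-finite-incl} to reduce to subgroups of the diagonalizable group $N/H$, where $N \coloneqq N_G(H)$, and then translate via Cartier duality into a statement about intermediate lattices $\Mm^N \subseteq \Mmt \subseteq \Mm$. Each $\Ht \in \Hm_H^f$ satisfies $H \subseteq \Ht \subseteq N$ and corresponds to a finite subgroup $\Ht/H$ of $N/H$. Since $H$ acts trivially on each one-dimensional space $\C \cdot f_\chi$ of $B$-semi-invariants, the right action of $N$ induces characters of $N/H$, yielding a homomorphism $\Mm \to \X(N/H)$ whose kernel consists precisely of the $\chi$ for which $f_\chi$ descends to $G/N$, namely $\Mm^N$ by Proposition \ref{thm:hsd-normalizer}; standard arguments (e.g., matching ranks with the torus part of the diagonalizable group $N/H^\circ$) show the induced map $\Mm/\Mm^N \to \X(N/H)$ is an isomorphism. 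By Cartier duality, finite subgroups of $N/H$ correspond inclusion-reversingly to finite-index sublattices $\Mmt$ of $\Mm$ containing $\Mm^N$, and $\Mmt$ is precisely the weight lattice of $G/\Ht$.

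Next, I would identify these intermediate lattices with the distinguished subgroups of $\Mm$ of finite index in the sense of Definition \ref{def:subdatum-fc}. By Proposition \ref{thm:hsd-normalizer}, $\Mm^N = \lspan_\Z \Sigma^N$, where $\Sigma^N$ replaces each $\gamma \in \Sigma^+ \cup (\Sigma \setminus \X(R))$ by $2\gamma$ and leaves the other spherical roots unchanged. Since $\Sigma$ is linearly independent, the rays of $\cone(\Sigma)$ are the $\Q_{\ge 0}\gamma$ for $\gamma \in \Sigma$, and the primitive generator of $\Mmt$ on such a ray is $k_\gamma \gamma$ for the minimal positive integer $k_\gamma$ with $k_\gamma \gamma \in \Mmt$. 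The inclusion $\Mm^N \subseteq \Mmt$ then translates exactly to: $k_\gamma \in \{1, 2\}$ for every $\gamma$, with $k_\gamma = 2$ only for $\gamma \in \Sigma^+ \cup (\Sigma \setminus \X(R))$. For the pair $(\Mmt, \emptyset)$ with $[\Mm : \Mmt] < \infty$ we have $\Mmt^\perp = 0$, hence $\Sigma_0 = \Sigma$ in Definition \ref{defi:dist-pair}, so this is precisely the distinguishedness condition for $\Mmt$.

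Finally, I would verify that the Luna datum of such an $\Ht$ equals the subdatum $\Smt$ attached to $\Mmt$. The weight lattice equals $\Mmt$ by construction; since $[\Mm : \Mmt] < \infty$, the natural map $\Nm_\Q \to \Nmt_\Q$ is an isomorphism identifying the valuation cones $\Vm$ and $\Vmt$, and hence $\Sigmat$ as the primitive generators in $\Mmt$ of the rays of $\cone(\Sigma)$. The map $G/H \to G/\Ht$ is a finite \'etale $G$-equivariant cover (since $\Ht/H$ acts freely on $G/H$), so the stabilizer of the open $B$-orbit is unchanged and $\St^p = S^p$, matching the subdatum formula. The colors of $G/\Ht$ form the $(\Ht/H)$-orbit set of $\Dm$, and unramifiedness gives $\rhot$ by restriction to $\Mmt$, reproducing the description of $\Dmt^a$ analogously to the normalizer computation from Subsection \ref{sec:homog-spher-data-1}. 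Order-reversal is immediate, since a larger $\Mmt$ corresponds to a smaller subgroup $\Ht/H$ and hence a larger $\Ht$. I expect the main obstacle to be the rigorous identification $\X(N/H) \cong \Mm/\Mm^N$ and the careful bookkeeping of $\Dmt^a$ as an abstract set, in particular analyzing when $\Ht/H$ non-trivially permutes colors in $\Dm^a$ (analogously to the undetermined-colors case in the normalizer).
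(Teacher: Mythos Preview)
Your approach is correct in outline and closely parallels the paper's, but packages the argument differently. Both proofs pivot on the normalizer $N = N_G(H)$ and the fact that every $\Ht \in \Hm_H^f$ sits between $H$ and $N$ (Remark~\ref{rem:normalizer-finite-incl}). The paper, however, does not work directly with $\X(N/H)$ and Cartier duality. Instead it develops the machinery of $(B \times H)$-semi-invariant rational functions on $G$ (Lemma~\ref{lem:b-times-h-ev}), embeds both $\Mm$ and $\Mmt$ injectively into the auxiliary lattice $\X(C) \times \Z^{\Dm^N}$ via explicit maps $\vartheta$, $\varthetat$ built from the color equations $f_D$ (Corollary~\ref{cor:pullback-b-ef}), and then applies a generalization of a lemma of Luna (Lemma~\ref{lem:intermediate-grps}) classifying all closed subgroups between $N^*$ and $N$ by sublattices of $\X(C) \times \Z^{\Dm^N}$. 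The paper also reverses your order of construction: it first takes a spherical subgroup $\Ht$ with datum $\Smt$ via the existence part of the Luna conjecture, observes that $N_G(\Smt) = N_G(\Sm)$ so that one may assume $N_G(\Ht) = N$, and only then shows $H \subseteq \Ht$ by comparing the images $\vartheta(\Mm) \supseteq \varthetat(\Mmt)$. Surjectivity of $\Ss^f\Sm \to \Hm_H^f$ is obtained from the pullback analysis of colors in Section~\ref{sec:pullback-col}; your sketch of $\Dmt^a$ would need those same results.

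Your route through $\X(N/H) \cong \Mm/\Mm^N$ is conceptually cleaner, and your identification of distinguished finite-index sublattices with the intermediate lattices $\Mm^N \subseteq \Mmt \subseteq \Mm$ is correct. But the surjectivity of $\Mm/\Mm^N \to \X(N/H)$ is exactly the nontrivial content that the paper's Sections~\ref{sec:b-h-semi-inv-func}--\ref{sec:prel-corr} supply; calling it a ``standard argument'' understates what is required, and you are right to flag it as the main obstacle. One minor slip at the end: a larger $\Mmt$ corresponds to a smaller $\Ht/H$ and hence a \emph{smaller} $\Ht$, not a larger one; the bijection is nonetheless order-reversing as claimed.
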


\begin{rem}
  \label{rem:no-conj-finite}
  Observe that the equivalence relation \enquote{$\sim$}, induced by conjugation, is trivial on $\Hm_H^f$. Indeed, let $\Ht \in \Hm_H^f$ and $g \in G$ with $g \Ht g^{-1} \in \Hm_H^f$. As both $g H^\circ g^{-1}$ and $H^\circ$ are contained in $g \Ht g^{-1}$, it follows that $g H^\circ g^{-1} = H^\circ$, i.e., $g \in N_G(H^\circ)$. By Remark \ref{rem:normalizer-finite-incl}, $N_G(H^\circ) = N_G(\Ht)$, and thus $g \Ht g^{-1} = \Ht$.
\end{rem}

\begin{proof}[Proof of Theorem \ref{thm:corrsp-dist-pairs-subgrps}]
  Observe that any $\Ht \in \Hm_H$ has a unique intermediate spherical subgroup $H \subseteq \Ht_0 \subseteq \Ht$ such that $\Ht_0/H$ is connected and $\Ht/\Ht_0$ is finite, namely $\Ht_0 \coloneqq H \Ht^\circ$. On the other hand any distinguished pair $(\Mmt, \Dm^1)$ has an \enquote{intermediate} colored subspace $(\Mmt^\perp, \Dm^1)$ and  these two objects correspond to each other. The claim straightforwardly follows by the bijection $\{\text{colored subspaces}\} \leftrightarrow \Hm_H^{\mathsf{c}}$ (see \cite[Section 4]{Knop:LV}) and Theorem \ref{thm:bijection-finite-quot}.
\end{proof}
\begin{rem}
  The correspondence between $\Hm_H$ and distinguished pairs extends the two bijections $\Hm_h^c \to \{\text{colored subspaces}\;(\Nm^1, \Dm^1)\}$ and $\Hm_H^f \to \{\text{distinguished subgroups} \;\Mmt \subseteq M\}$.
\end{rem}

\begin{proof}[Proof of Theorem \ref{thm:corr-subdata-subgrp}]
  Let $\Smt  \preceq \Sm$ be a subdatum with corresponding distinguished pair $( \Mmt, \Dm^1 )$ (which need not to be unique). As $(\Mmt^\perp, \Dm^1)$ is a colored subspace of $\Nm_\Q$, there is a spherical subgroup $H_0 \subseteq G$ containing $H$ such that $H_0/H$ is connected (see \cite[Section 4]{Knop:LV}). Let $\Sm_0$ be the Luna datum of $H_0$. It is straightforward to verify that $\Smt \in \Ss^{\mathsf{f}}\Sm_0$, and thus, by Theorem \ref{thm:bijection-finite-quot}, there is a spherical subgroup  $\Ht$ containing $H_0$ such that $\Ht/H_0$ is finite. It follows that the assignment $\Smt \mapsto \Ht$ defines a well-defined injective map $\Ss \Sm \to \Hm_H/\sim$ (see Proposition \ref{prop:clc}).
  
  To show surjectivity let $\Ht \in \Hm_H$ with associated Luna datum $\Smt$. By \cite[Section 4]{Knop:LV}, $\Ht_0 = H \Ht^\circ$ corresponds to a colored subspace $( \Nm^1, \Dm^1 )$. By Theorem \ref{thm:bijection-finite-quot}, $\Smt \in \Ss^{\mathsf{f}}\Smt_0$ where $\Smt_0$ is the Luna datum of $\Ht_0$. It is straightforward to show that $\Smt$ is the subdatum of $\Sm$ corresponding to the distinguished pair $(\Mmt,\Dm^1)$.
\end{proof}

\begin{example}
  Let $G = \Spin_5$. Fix a maximal torus contained in some Borel subgroup, and denote by $\alpha_1, \alpha_2$ the simple roots of $G$ where $\alpha_2$ is the shorter root. According to the $14$th entry in \cite[Table B]{Wasserman}, there exists a wonderful subgroup $H$ of $G$ with Luna datum $\Sm = ( \Mm, \Sigma, S^p, \Dm^a )$ where $\Mm = \Z \alpha_1 \oplus \Z \alpha_2$, $\Sigma = \set{ \alpha_1, \alpha_2 }$, $S^p = \emptyset$ and $\Dm^a = \set{ D_{\alpha_1}^\pm, D_{\alpha_2}^\pm }$ equipped with the map $\rho \colon \Dm^a \to \Nm \coloneqq \Hom( \Mm, \Z )$ satisfying
  \begin{align*}
    \langle \rho(D_{\alpha_1}^+), \alpha_1 \rangle &= 1, & \langle
    \rho(D_{\alpha_1}^+), \alpha_2 \rangle & = 0, &\langle
    \rho(D_{\alpha_1}^-), \alpha_1 \rangle &= 1, & \langle
    \rho(D_{\alpha_1}^-), \alpha_2 \rangle & = -1, \\
    \langle \rho(D_{\alpha_2}^+), \alpha_1 \rangle &= -1, & \langle
    \rho(D_{\alpha_2}^+), \alpha_2 \rangle & = 1, &\langle
    \rho(D_{\alpha_2}^-), \alpha_1 \rangle &= -1, & \langle
    \rho(D_{\alpha_2}^-), \alpha_2 \rangle & = 1 \text{.}
  \end{align*}
  The distinguished pair $( \Z 2\alpha_2, \set{ D_{\alpha_1}^+ } )$ is neither a colored subspace nor a distinguished subgroup.
\end{example}

\section{Equivariant Pullback of a Color}
\label{sec:pullback-col}

In this section, we explain how colors behave under $G$-equivariant pullback, a result which will be needed in the proof of Theorem \ref{thm:bijection-finite-quot}.

Let $\Ht \in \Hm_H$ and consider the natural $G$-equivariant dominant morphism $\psi \colon G/H \to G/\Ht$. We continue to use the notation from Section \ref{sec:generalities} and write $( \Mm, \Sigma, S^p, \Dm^a )$ resp.~$( \Mmt, \Sigmat, \St^p, \Dmt^a )$ for the Luna datum of $H$ resp.~of $\Ht$. Recall from \cite[Section 4]{Knop:LV} that we have an inclusion $\psi^* \colon \Mmt \hookrightarrow \Mm$ and $\psi_*(\Vm) = \Vmt$ where $\psi_* \colon \Nm \to \Nmt$ denotes the map dual to the inclusion $\Mmt \hookrightarrow \Mm$ and $\Vm$ (resp.~$\Vmt$) denotes the valuation cone of $G/H$ (resp.~of $G/\Ht$). Let $\Dm_\psi \subseteq \Dm$ be the colors which $\psi$ maps dominantly to $G/\Ht$. We get a (surjective) map $\psi_* \colon \Dm \setminus \Dm_\psi \to \Dmt; D \mapsto \overline{\psi(D)}$.

In the sequel, we will abbreviate $\Sigma \cap S$ by $\Sigma^a$ and $\Sigma \cap 2S$ by $\Sigma^{2a}$. Recall that we have maps $\rho \colon \Dm \to \Nm$ and $\rhot \colon \Dmt \to \Nmt$.

\begin{prop}
  \label{prop:pullback-col}
  For the pullback $\psi^*(\Dt)$ of $\Dt \in \Dmt$ we have:
  \begin{enumerate}
  \item If $\Dt \in \Dmt^a$, then $\psi^*(\Dt) \in \Dm^a$.
  \item If $\Dt \in \Dmt^b$, then $\psi^*(\Dt) \in \Dm^a \cup \Dm^b$.
  \item If $\Dt \in \Dmt^{2a}$, then either $\psi^*(\Dt) \in \Dm^{2a}$ or $\psi^*(\Dt) = D_1 + D_2$ for $\Dm(\alpha) = \set{ D_1, D_2 }$ where $\varsigma(\Dt) = \set{ \alpha } \subseteq \Sigma^a$.
  \end{enumerate}
\end{prop}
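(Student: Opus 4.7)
The plan is to combine the smoothness of the equivariant projection $\psi \with G/H \to G/\Ht$ with Foschi's Proposition~\ref{prop:foschi}, which determines the $B$-weight of a section defining a color. Since $\Ht/H$ is a smooth homogeneous space of $\Ht$, the morphism $\psi$ is smooth, so the scheme-theoretic preimage of the reduced divisor $\Dt$ is again reduced. This yields a decomposition
\[
  \psi^*(\Dt) = D_1 + \cdots + D_k
\]
as a sum of distinct colors $D_j \in \Dm \setminus \Dm_\psi$ with $\psi_*(D_j) = \Dt$. By $G$-equivariance, any minimal parabolic $P_\alpha$ stabilizing $D_j$ also stabilizes $\Dt = \overline{\psi(D_j)}$, so $\widetilde{\varsigma}(\Dt) \subseteq \varsigma(D_j)$ for every $j$.

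The second ingredient is the weight identity
\[
  \langle \alpha^\vee, \widetilde{\lambda} \rangle = \sum_{j=1}^k \langle \alpha^\vee, \lambda_j \rangle \qquad \text{for every } \alpha \in S,
\]
where $\widetilde{\lambda}$ (resp.~$\lambda_j$) denotes the $B$-weight of a chosen $B$-semi-invariant section $\widetilde{s}$ (resp.~$s_j$) defining $\Dt$ (resp.~$D_j$) on some $G$-linearized line bundle. It holds because $\psi^*\widetilde{s}$ and $\prod_j s_j$ share the divisor $\sum_j D_j$, so their ratio is a $B$-semi-invariant invertible regular function on $G/H$; since $G$ is connected reductive, any such function is the restriction of a character of $G$ trivial on $H$, and characters of $G$ pair trivially with every simple coroot. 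Establishing this identity carefully, with due attention to the $G$-linearizations involved, is the main technical point of the argument.

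Granted this identity, the three cases follow from a direct inspection using Proposition~\ref{prop:foschi}. Fix $\alpha \in \widetilde{\varsigma}(\Dt)$; as noted above $\alpha \in \varsigma(D_j)$ for all $j$, so every summand $\langle \alpha^\vee, \lambda_j \rangle$ lies in $\set{1, 2}$. In case~(2), $\langle \alpha^\vee, \widetilde{\lambda} \rangle = 1$, forcing $k = 1$ and $D_1 \in \Dm^a \cup \Dm^b$. In case~(1), one picks $\alpha \in \Sigmat \cap \widetilde{\varsigma}(\Dt)$; again $k = 1$ and $D_1 \in \Dm^a \cup \Dm^b$. Moreover, $\alpha \in \Sigmat$ lies on a ray of $\cone(\Sigma)$, and Table~\ref{tab:sr} shows that the only spherical roots with support $\set{\alpha}$ are $\alpha$ and $2\alpha$. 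If $2\alpha \in \Sigma$, then $D_1 \in \Dm(\alpha) \cap \Dm^{2a}$, contradicting $\langle \alpha^\vee, \lambda_1 \rangle = 1$; hence $\alpha \in \Sigma$ and $D_1 \in \Dm^a$. In case~(3) the left-hand side equals $2$, so either $k = 1$ with $D_1 \in \Dm^{2a}$, or $k = 2$ with both $D_1, D_2 \in \Dm^a \cup \Dm^b$; in the latter subcase $\Dm(\alpha) \supseteq \set{D_1, D_2}$ has at least two elements, which forces $\alpha \in \Sigma^a$ and $\Dm(\alpha) = \set{D'_\alpha, D''_\alpha} = \set{D_1, D_2}$ of type~$a$, completing the proof.
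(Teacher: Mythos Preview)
Your argument is correct and, in one respect, noticeably more efficient than the paper's. Both approaches rest on the same weight identity $\langle \check\alpha,\lambda_{\Dt}\rangle=\sum_j \langle\check\alpha,\lambda_{D_j}\rangle$ (the paper records this as Remark~\ref{rem:pullback-sdt}) and on reducedness of the pullback (the paper derives this from \'etale local triviality of $G\to G/H$ and $G\to G/\Ht$; your smoothness argument for $\psi$ is an equivalent formulation). The genuine difference is your observation that $\widetilde\varsigma(\Dt)\subseteq\varsigma(D_j)$ for \emph{every} component $D_j$, obtained by pushing forward the $P_\alpha$-stability of $D_j$ along the $G$-equivariant map $\psi$. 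The paper never isolates this inclusion; it only uses the weaker fact $\widetilde\varsigma(\Dt)=\bigcup_j\varsigma(D_j)$ coming from the weight identity.

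The payoff is most visible in case~(2). With your inclusion in hand, picking any $\alpha\in\widetilde\varsigma(\Dt)$ immediately forces $k=1$, since each summand $\langle\check\alpha,\lambda_{D_j}\rangle$ is at least $1$ while the total is $1$. The paper's proof of this case (Proposition~\ref{prop:b-gives-ab}) instead performs a case analysis on $|\widetilde\varsigma(\Dt)|$; when $\widetilde\varsigma(\Dt)=\set{\alpha,\beta}$ with $\lambda(\alpha+\beta)\in\Sigmat$, it has to rule out $\psi^*(\Dt)=D_1+D_2$ with $\varsigma(D_1)=\set{\alpha}$ and $\varsigma(D_2)=\set{\beta}$ by passing through the intermediate subgroup $\Ht_0=H\Ht^\circ$, invoking the colored-subspace description of $\Hm_H^{\mathsf c}$, and deriving a contradiction from the location of the ``missing'' colors $D_1',D_2'$. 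Your equivariance observation makes this entire detour unnecessary. Similarly, for case~(1) the paper first proves a decomposition with disjoint $\varsigma(D_j)$'s (Proposition~\ref{prop:pullback-col-type-a}), then establishes $\rhot(\Dt)=\rho(D_j)|_{\Mmt}$ (Proposition~\ref{prop:rho-of-pullback}), and only then concludes $k=1$ (Corollary~\ref{cor:a-col-come-from-a-col}); you get $k=1$ in one step and finish by the support argument, which is essentially the paper's Proposition~\ref{prop:a-roots}. Case~(3) is handled the same way in both approaches.

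One small point of phrasing: when you say ``$\alpha\in\Sigmat$ lies on a ray of $\cone(\Sigma)$'', the cleanest justification is not that rays of $\cone(\Sigmat)=\cone(\Sigma)\cap\Mmt_\Q$ are automatically rays of $\cone(\Sigma)$, but rather the support argument you then give: since $\alpha\in S$ and every spherical root is a nonnegative combination of simple roots, any expression $\alpha=\sum c_i\gamma_i$ with $\gamma_i\in\Sigma$ forces each contributing $\gamma_i$ to have support $\set{\alpha}$, hence to equal $\alpha$ or $2\alpha$. This is exactly the content of Proposition~\ref{prop:a-roots}, which you could simply cite.
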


The proof of the three parts of Proposition \ref{prop:pullback-col} will be given in the three subsections \ref{sec:pullback-col-a}, \ref{sec:pullb-col-b} and \ref{sec:pullback-col-2a} respectively.

\subsection{Preliminaries}
\label{sec:prep-obs}

\begin{prop}
  \label{prop:a-roots}
  $\cone( \Sigmat ) = \cone( \Sigma ) \cap \Mmt_\Q$. In particular $\Sigmat^a \subseteq \Sigma^a$.
\end{prop}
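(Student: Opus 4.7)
The plan is to prove the two assertions by dualizing the given identity $\psi_*(\Vm) = \Vmt$, and then to extract $\Sigmat^a \subseteq \Sigma^a$ from the first identity by a combination of linear independence of simple roots and the rank-$1$ classification in Table~\ref{tab:sr}. Recall that by definition $\cone(\Sigma) = -\Vm^\vee$ and $\cone(\Sigmat) = -\Vmt^\vee$, so the cone equality amounts to comparing the dual cones of $\Vm$ and $\Vmt$ under the inclusion $\psi^* \colon \Mmt \hookrightarrow \Mm$.

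First I would verify the dual identity $\Vmt^\vee = \Vm^\vee \cap \Mmt_\Q$. For $\chi \in \Mmt_\Q$, the adjunction $\langle \psi_*(v), \chi \rangle = \langle v, \psi^*(\chi) \rangle$ for $v \in \Vm$ shows that $\chi$ lies in $\Vmt^\vee = \psi_*(\Vm)^\vee$ if and only if $\chi$, viewed as an element of $\Mm_\Q$, lies in $\Vm^\vee$. Negating and reading this off at the level of ray generators gives
\[
  \cone( \Sigmat ) = -\Vmt^\vee = -\Vm^\vee \cap \Mmt_\Q = \cone( \Sigma ) \cap \Mmt_\Q \text{.}
\]

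For the inclusion $\Sigmat^a \subseteq \Sigma^a$, take $\alpha \in \Sigmat \cap S$. Since $\Sigmat \subseteq \Mmt \subseteq \Mm$, one has $\alpha \in \Mm$, and the cone equality forces $\alpha \in \cone(\Sigma)$. Writing $\alpha = \sum_{\gamma \in \Sigma} c_\gamma \gamma$ with $c_\gamma \ge 0$ and expanding each $\gamma$ in simple roots (with nonnegative coefficients), the linear independence of $S$ shows that any $\gamma \in \Sigma$ contributing nontrivially must satisfy $\Supp(\gamma) \subseteq \set{\alpha}$. Inspecting Table~\ref{tab:sr} (the only $\mathsf{A}_1$ entries), this leaves $\gamma \in \set{\alpha, 2\alpha}$. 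If $\alpha \notin \Sigma$, the only possibility is $\gamma = 2\alpha \in \Sigma$; but then $2\alpha$ would be the primitive generator of its ray in $\Mm$, contradicting $\alpha \in \Mm$. Hence $\alpha \in \Sigma \cap S = \Sigma^a$.

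The only potential subtlety is the last dichotomy between $\alpha$ and $2\alpha$, where one must keep careful track of the two notions of primitivity (in $\Mm$ versus in $\Mmt$); this is precisely where the rank-$1$ classification is invoked, and this is the part I would expect to require the most care in writing up. The dual-cone computation itself is purely formal given $\psi_*(\Vm) = \Vmt$ and presents no real obstacle.
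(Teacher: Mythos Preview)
Your proof is correct and follows essentially the same route as the paper: the cone identity is obtained by exactly the same dual-cone computation from $\psi_*(\Vm)=\Vmt$, and the inclusion $\Sigmat^a\subseteq\Sigma^a$ is deduced from the fact that spherical roots are nonnegative combinations of simple roots. The paper compresses the second step into one sentence, whereas you spell out the support argument and use primitivity of $\Sigma$ in $\Mm$ to exclude the $2\alpha$ case; this extra care is sound and makes the conclusion fully rigorous.
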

\begin{proof}
  We have
  \[
    \cone ( \Sigmat ) = - ( \Vmt )^\vee = - \psi_*( \Vm )^\vee = (
    -\Vm^\vee ) \cap \Mmt_\Q = \cone( \Sigma ) \cap \Mmt_\Q \text{.}
  \]
  As every spherical root is a nonnegative linear combination of simple roots, it follows $\Sigmat^a \subseteq \Sigma^a$.
\end{proof}

Replacing $G$ with a suitable finite covering, we may assume that $G = G^{ss} \times C$ where $G^{ss}$ is semi-simple simply connected and $C$ is a torus. Then, by \cite[Prop. 2.4 and Rem. after it]{KKLV:LPAGA}, any line bundle on $G/H$ is $G$-linearizable. Moreover, as any two linearizations differ by a character of $C$, for any $B$-invariant effective Cartier divisor $\delta$ on $G/H$ there exists a uniquely determined $G$-linearized line bundle $L_\delta$ on $G/H$ together with a $B$-semi-invariant and $C$-invariant global section $s_\delta$ (determined up to proportionality) such that $\Div s_\delta = \delta$.

In particular, every color $D \in \Dm$ defines a $G$-linearized line bundle $L_D$ on $G/H$ together with a $B$-semi-invariant and $C$-invariant global section $s_D$, whose $B$-weight we denote by $\lambda_D \in \X(B)$. Analogously for $\Dt \in \Dmt$ where we use the notation $L_\Dt$, $s_\Dt$ and $\lambda_\Dt$.

\begin{rem}
  \label{rem:pullback-sdt}
  As the pullback $\psi^*( s_\Dt )$ is $B$-semi-invariant and $C$-invariant, there exist $\mu_{\Dt, D} \in \Z_{\ge 0}$ (for $D \in \Dm$) and an isomorphism of $G$-linearized line bundles
  \[
    \psi^* \rleft( L_\Dt \rright) \cong \bigotimes_{D \in \Dm} L_D^{\otimes \mbox{$\mu_{\Dt, D}$}}
  \]
  sending $\psi^* ( s_\Dt )$ to a nonzero multiple of $\bigotimes_{D \in \Dm} s_D^{\otimes \mbox{$\mu_{\Dt, D}$}}$. In particular, $\lambda_\Dt = \sum_{D \in \Dm} \mu_{\Dt, D} \lambda_D$. By Proposition \ref{prop:foschi}, $\varsigma(\Dt)$ coincides with the union of all $\varsigma(D')$ for $D' \in \Dm$ with $\mu_{\Dt,D'} \neq0$.
\end{rem}

\begin{prop}
  \label{prop:Sp}
  $\psi$ induces a map between the open $B$-orbits. In particular $S^p \subseteq \St^p$.
\end{prop}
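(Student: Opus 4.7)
The plan is to show first that $\psi$ sends the open $B$-orbit in $G/H$ \emph{onto} the open $B$-orbit in $G/\Ht$; the inclusion of stabilizers $P\subseteq\Pt$ of these two open orbits will then immediately translate into $S^p\subseteq\St^p$, since $S^p$ and $\St^p$ are by construction the sets of simple roots whose associated minimal parabolic is contained in $P$ and $\Pt$, respectively.

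To produce this surjection between open $B$-orbits, I would first record that the morphism $\psi\colon G/H\to G/\Ht$, being induced between homogeneous $G$-spaces by an inclusion $H\subseteq\Ht$ of closed subgroups, is smooth and surjective (its fibres are all isomorphic to $\Ht/H$), hence in particular open. After replacing $H$, and with it $\Ht$, by a suitable conjugate I may assume that $o\coloneqq eH$ lies in the open $B$-orbit $X^\circ\subseteq G/H$, so that $X^\circ=B\cdot o$. By $B$-equivariance,
\[
  \psi(X^\circ)\;=\;\psi(B\cdot o)\;=\;B\cdot\psi(o)\;=\;B\cdot e\Ht,
\]
which is a single $B$-orbit in $G/\Ht$; it is open because $\psi$ is an open map, and by the uniqueness of the open $B$-orbit this forces $\psi(X^\circ)=\widetilde{X}^\circ$, where $\widetilde{X}^\circ\subseteq G/\Ht$ denotes the open $B$-orbit.

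The inclusion of stabilizers is then formal: for $g\in P$, the $G$-equivariance of $\psi$ gives
\[
  g\cdot\widetilde{X}^\circ\;=\;g\cdot\psi(X^\circ)\;=\;\psi(g\cdot X^\circ)\;=\;\psi(X^\circ)\;=\;\widetilde{X}^\circ,
\]
so $g\in\Pt$. This proves $P\subseteq\Pt$, and hence $S^p\subseteq\St^p$.

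The only step that requires any care is the openness of $\psi$ together with the observation that $\psi(X^\circ)$ is a full $B$-orbit rather than merely a $B$-stable subset of one; once that is noted, the entire argument is a straightforward piece of equivariance bookkeeping, and I do not anticipate any genuine obstacle.
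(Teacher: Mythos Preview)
Your argument is correct and is essentially the same as the paper's: the paper's proof consists of the single sentence ``The map $\psi$ is open (see \cite[Theorem 25.1.2]{TauvelYu:LAAG}) and $G$-equivariant'', and what you have written is exactly the unpacking of why these two facts yield the conclusion. Your justification of openness via smoothness of $\psi$ is an acceptable substitute for the citation.
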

\begin{proof}
  The map $\psi$ is open (see \cite[Theorem 25.1.2]{TauvelYu:LAAG})) and $G$-equivariant.
\end{proof}

\subsection{Pullback of Colors of Type $a$}
\label{sec:pullback-col-a}

In this section, we prove the first part of Proposition \ref{prop:pullback-col}.

\begin{prop}
  \label{prop:pullback-col-type-a}
  If $\Dt \in \Dmt^a$, then $\psi^*( \Dt ) = D_1 + \ldots + D_k$ for colors $D_1, \ldots, D_k \in \Dm^a$ and the union $\varsigma(\Dt) = \varsigma(D_1) \cup \ldots \varsigma(D_k)$ is disjoint.
\end{prop}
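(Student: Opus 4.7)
The plan is to combine the line-bundle identity of Remark \ref{rem:pullback-sdt} with the explicit weight formula of Foschi (Proposition \ref{prop:foschi}) and the fact that the type of a color is the same for every simple root in its $\varsigma$-support. Write $\psi^*(\Dt) = \sum_{D \in \Dm} \mu_{\Dt, D} D$; then Remark \ref{rem:pullback-sdt} gives
\[
  \lambda_\Dt = \sum_{D \in \Dm} \mu_{\Dt, D} \lambda_D \quad \text{and} \quad \varsigma(\Dt) = \bigcup_{\mu_{\Dt, D} > 0} \varsigma(D).
\]
Because $\Dt$ is of type $a$, type-independence together with Proposition \ref{prop:a-roots} gives $\varsigma(\Dt) \subseteq \Sigmat^a \subseteq \Sigma^a$.

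I would first exclude components of type $b$ or $2a$. Suppose for contradiction some $D \in \Dm^b \cup \Dm^{2a}$ has $\mu_{\Dt, D} > 0$, and fix any $\beta \in \varsigma(D)$. Proposition \ref{prop:foschi} gives $\langle \beta^\vee, \lambda_D \rangle \ge 1$, so pairing the weight identity with $\beta^\vee$ yields $\langle \beta^\vee, \lambda_\Dt \rangle > 0$, whence $\beta \in \varsigma(\Dt) \subseteq \Sigma$. If $D$ is of type $b$, this contradicts $\beta \notin \Sigma$; if $D$ is of type $2a$, then $2\beta \in \Sigma$ already, and $\beta, 2\beta \in \Sigma$ contradicts the linear independence of $\Sigma$. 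Hence every component $D_i$ of $\psi^*(\Dt)$ lies in $\Dm^a$.

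To extract multiplicity one and the disjointness of the $\varsigma(D_i)$, pick $\beta \in \varsigma(D_i) \subseteq \varsigma(\Dt) \subseteq \Sigma^a$. Proposition \ref{prop:foschi} applied to $\Dt$ gives $\langle \beta^\vee, \lambda_\Dt \rangle = 1$, while for every $D' \in \Dm^a$ it gives $\langle \beta^\vee, \lambda_{D'} \rangle = 1$ if $\beta \in \varsigma(D')$ and $0$ otherwise. The weight identity thus becomes a sum of nonnegative integers equal to $1$, forcing $\mu_{\Dt, D_i} = 1$ and $\mu_{\Dt, D'} = 0$ for every $D' \neq D_i$ with $\beta \in \varsigma(D')$. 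This establishes simultaneously that $\mu_{\Dt, D_i} = 1$ and that no simple root lies in two distinct $\varsigma(D_i)$; the union identity from Remark \ref{rem:pullback-sdt} then upgrades this to $\varsigma(\Dt) = \bigsqcup_i \varsigma(D_i)$.

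The main obstacle is the type-exclusion step: one has to exploit that the type of a color is uniform across its $\varsigma$-support, together with Proposition \ref{prop:a-roots}, to convert $\beta \in \varsigma(\Dt)$ into $\beta \in \Sigma$. Once this is in place, Foschi's weight formula produces the unit multiplicities and the disjointness of supports mechanically.
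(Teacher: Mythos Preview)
Your proposal is correct and follows essentially the same route as the paper: both combine Remark~\ref{rem:pullback-sdt}, Foschi's weight formula (Proposition~\ref{prop:foschi}), and Proposition~\ref{prop:a-roots} to force every component into $\Dm^a$ with multiplicity one and pairwise disjoint $\varsigma$-supports. The only cosmetic difference is that the paper argues root-by-root over $\alpha\in\varsigma(\Dt)$ (observing directly that $\Dm(\alpha)\subseteq\Dm^a$ since $\alpha\in\Sigma^a$), whereas you argue color-by-color and include an explicit type-exclusion step; the underlying logic is the same.
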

\begin{proof}
  Let $\alpha \in \varsigma( \Dt )$. By Proposition \ref{prop:Sp}, $\Dm(\alpha) \neq \emptyset$ and, by Proposition \ref{prop:foschi},
  \[
    1 = \langle \check{\alpha}, \lambda_\Dt \rangle = \sum_{D \in \Dm} \mu_{\Dt, D} \langle \check{\alpha}, \lambda_D \rangle = \sum_{D \in \Dm(\alpha)} \mu_{\Dt, D} \langle \check{\alpha}, \lambda_D \rangle.
  \]
  By Proposition \ref{prop:a-roots}, $\Dm(\alpha) = \set{ D_{\alpha,1}, \Dm_{\alpha,2} } \subseteq \Dm^a$, so that $\mu_{\Dt, D_{\alpha,1}} = 1$ and $\mu_{\Dt, D_{\alpha,2}} = 0$ or vice versa.

  Going through all simple roots in $\varsigma(\Dt)$, we obtain colors $D_1, \ldots, D_k \in \Dm^a$ such that $\psi^*(\Dt) = D_1 + \ldots + D_k$. If there were $\alpha \in \varsigma(D_i) \cap \varsigma(D_j)$ for $i \neq j$, then, by Proposition \ref{prop:foschi},
  \[
    2 \le \langle \check{\alpha}, \lambda_{D_1} \rangle + \ldots + \langle \check{\alpha}, \lambda_{D_k} \rangle = \langle \check{\alpha}, \lambda_\Dt \rangle
  \]
  which is not possible as $\Dt \in \Dmt^a$.
\end{proof}

\begin{prop}
  \label{prop:rho-of-pullback}
  In the situation of Proposition \ref{prop:pullback-col-type-a}, we have $\rhot(\Dt) = \rho(D_i)|_\Mmt$ for $i = 1, \ldots, k$.
\end{prop}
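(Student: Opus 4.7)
The plan is to verify the equality $\rho(D_i)|_\Mmt = \rhot(\Dt)$ in $\Nmt$ by pairing both sides against an arbitrary $\chi \in \Mmt$. Let $f_\chi \in \C(G/\Ht)^{(B)}$ be a $B$-semi-invariant rational function of weight $\chi$. Its pullback $\psi^*(f_\chi) \in \C(G/H)^{(B)}$ is again $B$-semi-invariant of $B$-weight $\chi$, now viewed as an element of $\Mm$ via the inclusion $\Mmt \hookrightarrow \Mm$. By the very definitions of $\rho$ and $\rhot$ we have
\[
\langle \rhot(\Dt), \chi \rangle = \nu_\Dt(f_\chi), \qquad \langle \rho(D_i), \chi \rangle = \nu_{D_i}(\psi^*(f_\chi)),
\]
so the claim reduces to the valuation identity $\nu_{D_i}(\psi^*(f_\chi)) = \nu_\Dt(f_\chi)$.

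For this identity I would appeal to Proposition \ref{prop:pullback-col-type-a}: we have $\psi^*(\Dt) = D_1 + \cdots + D_k$ as Weil divisors on $G/H$, each $D_i$ occurring with multiplicity exactly $1$. This multiplicity reading is forced by the relation $\mu_{\Dt, D_i} = 1$ established in the proof of that proposition together with Remark \ref{rem:pullback-sdt}: the isomorphism $\psi^*(L_\Dt) \cong L_{D_1} \otimes \cdots \otimes L_{D_k}$ sends $\psi^*(s_\Dt)$ to a nonzero multiple of $s_{D_1} \otimes \cdots \otimes s_{D_k}$, whose divisor is precisely $D_1 + \cdots + D_k$. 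Equivalently, the ramification index of $\psi$ along $D_i$ equals $1$: a uniformizer $\pi_\Dt$ of $\Dt$ at its generic point pulls back to a function vanishing to order exactly $1$ along $D_i$. A standard local computation then yields $\nu_{D_i}(\psi^*(g)) = \nu_\Dt(g)$ for every $g \in \C(G/\Ht)^*$, and specializing to $g = f_\chi$ and letting $\chi$ range over $\Mmt$ gives the desired identity.

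I do not anticipate a genuine obstacle: the decisive multiplicity-one decomposition $\psi^*(\Dt) = D_1 + \cdots + D_k$ is already supplied by Proposition \ref{prop:pullback-col-type-a}, and the remainder is a routine ramification-index argument. The only point worth checking carefully is that the line-bundle isomorphism from Remark \ref{rem:pullback-sdt} really translates to the stated equality of Weil divisors with multiplicity one, which is standard because the sections $s_\Dt$ and $s_{D_i}$ are chosen precisely so that $\Div s_\Dt = \Dt$ and $\Div s_{D_i} = D_i$.
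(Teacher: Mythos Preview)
Your route is sound and in fact more direct than the paper's: you read off the exact equality $\nu_{D_i}(\psi^*g)=\nu_\Dt(g)$ from the multiplicity-one decomposition $\psi^*(\Dt)=D_1+\cdots+D_k$, whereas the paper argues only the implication $\langle\rhot(\Dt),\chi\rangle\ge 0\Rightarrow\langle\rho(D_i),\chi\rangle\ge 0$ (by pulling back regularity from $\widetilde O\cup\Dt$ to its preimage), concludes that $\rhot(\Dt)$ and $\rho(D_i)|_\Mmt$ lie on the same ray, and then pins the scalar to $1$ by evaluating on some $\alpha\in\varsigma(D_i)\subseteq\varsigma(\Dt)$.

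One point in your argument deserves to be made explicit. The ``standard local computation'' $\nu_{D_i}(\psi^*g)=e\cdot\nu_\Dt(g)$ is \emph{not} a formal consequence of $D_i$ appearing with multiplicity $e$ in $\psi^*\Dt$; it also requires that the generic point of $D_i$ map to the generic point of $\Dt$, i.e.\ that $\psi|_{D_i}\colon D_i\to\Dt$ be dominant. (Otherwise a unit in $\mathcal O_{G/\Ht,\eta_\Dt}$ need not pull back to a unit in $\mathcal O_{G/H,\eta_{D_i}}$; think of an exceptional divisor of a blow-up sitting over a single point of a hypersurface.) In the present situation this dominance holds because $\psi\colon G/H\to G/\Ht$ is a locally trivial $\Ht/H$-fibration, hence flat with equidimensional fibres, and a fibre-dimension count forces $\overline{\psi(D_i)}=\Dt$. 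Once you insert this sentence, your proof is complete. Note that the paper's inequality-plus-normalisation argument sidesteps this issue entirely, since ``regular on $\widetilde O\cup\Dt$ implies regular on its preimage'' needs nothing beyond the definition of pullback.
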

\begin{proof}
  By Proposition \ref{prop:Sp}, $\psi$ maps the open $B$-orbit $O$ in $G/H$ onto the open $B$-orbit $\widetilde{O}$ in $G/\Ht$. If $f_\chi \in \C( G / \Ht )$ is a $B$-semi-invariant rational function of weight $\chi \in \widetilde{M}$, then $\psi^*(f_\chi) \in \C(G/H)$ is a $B$-semi-invariant rational function of weight $\chi$. If $\langle \rhot( \Dt ), \chi \rangle \ge0$, i.e., $f_\chi$ extends to a regular function on $\widetilde{O} \cup \Dt$, then $\psi^*(f_\chi)$ extends to a regular function on $O \cup D_1 \cup \ldots \cup D_k$, i.e., $\langle \rho( D_i ), \chi \rangle \ge 0$. It follows that $\rhot(\Dt) = l \rho(D_i)|_\Mmt$ for some $l \in \Q_{>0}$. As $\langle \rho(D_i), \alpha \rangle = 1 = \langle \rhot( \Dt ), \alpha \rangle$ for $\alpha \in \varsigma(D_i) \subseteq \varsigma( \Dt )$, we have $l = 1$.
\end{proof}

\begin{cor}
  \label{cor:a-col-come-from-a-col}
  The pullback of Cartier divisors induces an embedding $\psi^* \colon \Dmt^a \hookrightarrow \Dm^a$ which restricts to a bijection $\psi^*|_{\Dmt( \alpha )} \colon \Dmt( \alpha ) \to \Dm( \alpha )$ for each $\alpha \in \Sigmat^a$.
\end{cor}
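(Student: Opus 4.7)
The plan is to combine Proposition \ref{prop:pullback-col-type-a} with one additional geometric observation to force the decomposition of $\psi^*(\Dt)$ to consist of a single color, from which both the embedding and the fibrewise bijection drop out almost for free. Proposition \ref{prop:rho-of-pullback} is not actually needed for this deduction.

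The crucial observation is the following: for every $D \in \Dm \setminus \Dm_\psi$ with $\overline{\psi(D)} = \Dt$, we have $\varsigma(D) \supseteq \varsigma(\Dt)$. Indeed, if $\alpha \notin \varsigma(D)$ then $P_\alpha \cdot D = D$, so by $G$-equivariance of $\psi$ the image $\psi(D)$ is $P_\alpha$-stable, and hence so is its closure $\Dt$; this forces $\alpha \notin \varsigma(\Dt)$. Applying this to each component $D_i$ of $\psi^*(\Dt) = D_1 + \ldots + D_k$ and combining with the identity $\varsigma(\Dt) = \bigsqcup_i \varsigma(D_i)$ furnished by Proposition \ref{prop:pullback-col-type-a}, we sandwich $\varsigma(\Dt) \subseteq \varsigma(D_i) \subseteq \varsigma(\Dt)$, so $\varsigma(D_i) = \varsigma(\Dt)$ for every $i$. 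Since $\Dt \in \Dmt^a$ means $\Dt \in \Dmt(\alpha)$ for some $\alpha \in \Sigmat^a$, the set $\varsigma(\Dt)$ contains $\alpha$ and is nonempty, so the disjointness in the partition forces $k = 1$.

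Thus $\psi^*(\Dt) = D$ is a single color of $\Dm^a$, and $\Dt \mapsto D$ defines a map $\psi^* \colon \Dmt^a \to \Dm^a$. Injectivity is immediate: if $\psi^*(\Dt_1) = \psi^*(\Dt_2) = D$, then both $\Dt_i$ coincide with $\overline{\psi(D)}$. For the restriction, any $\Dt \in \Dmt(\alpha)$ with $\alpha \in \Sigmat^a$ satisfies $\alpha \in \varsigma(\Dt) = \varsigma(\psi^*(\Dt))$, so $\psi^*(\Dt) \in \Dm(\alpha)$. Axioms ($\operatorname{A1}$) and ($\operatorname{A3}$) applied in both $\Sm$ and $\Smt$ (using $\Sigmat^a \subseteq \Sigma^a$ from Proposition \ref{prop:a-roots}) give $|\Dmt(\alpha)| = |\Dm(\alpha)| = 2$, so an injection between two-element sets is automatically bijective.

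The main obstacle is spotting the $P_\alpha$-stability argument establishing the inclusion $\varsigma(D) \supseteq \varsigma(\Dt)$; once this geometric lemma is in hand the remainder is a purely combinatorial sandwich together with a counting argument.
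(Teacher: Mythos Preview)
Your proof is correct and takes a genuinely different route from the paper. Both arguments reduce to establishing the inclusion $\varsigma(\Dt) \subseteq \varsigma(D_i)$ for every component $D_i$ of $\psi^*(\Dt)$, which together with the disjoint decomposition $\varsigma(\Dt) = \bigsqcup_i \varsigma(D_i)$ from Proposition~\ref{prop:pullback-col-type-a} forces $k=1$. The paper obtains this inclusion via Proposition~\ref{prop:rho-of-pullback}: from $\rho(D_i)|_{\Mmt} = \rhot(\Dt)$ and $\langle \rhot(\Dt), \alpha \rangle = 1$ for every $\alpha \in \varsigma(\Dt)$ one deduces $\langle \rho(D_i), \alpha \rangle = 1$, and then axiom~($\operatorname{A1}$) together with $\Sigmat^a \subseteq \Sigma^a$ yields $\alpha \in \varsigma(D_i)$. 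You instead observe directly that $P_\alpha$-stability of $D_i$ is inherited by $\overline{\psi(D_i)} = \Dt$ via $G$-equivariance, which gives the contrapositive of the same inclusion without any appeal to $\rho$. Your approach is more elementary and self-contained; the paper's approach has the side benefit of already having Proposition~\ref{prop:rho-of-pullback} available, which is needed independently later in the paper. One small point you leave implicit: that each $D_i$ appearing in $\psi^*(\Dt)$ indeed lies in $\Dm \setminus \Dm_\psi$ with $\overline{\psi(D_i)} = \Dt$ follows because $D_i \subseteq \psi^{-1}(\Dt)$ and $\psi$ has equidimensional fibres, so $\overline{\psi(D_i)}$ is a prime divisor contained in the prime divisor $\Dt$.
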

\begin{proof}
  By Proposition \ref{prop:pullback-col-type-a}, we have $\psi^*(\Dt) = D_1 + \ldots + D_k$ with a disjoint union $\varsigma(\Dt) = \varsigma(D_1) \cup \ldots \cup \varsigma(D_k)$. As $\langle \rhot( \Dt ), \alpha \rangle = 1$ for every $\alpha \in \varsigma(\Dt)$, it follows by Proposition \ref{prop:rho-of-pullback} that $k=1$, i.e., $\psi^*(D) = D \in \Dm^a$ and $\varsigma(\Dt) = \varsigma(D)$. We get an embedding $\psi^* \colon \Dmt^a \hookrightarrow \Dm^a$. Since $\varsigma(\Dt) = \varsigma( \psi^*(\Dt) )$ for every $\Dt \in \Dmt^a$, we obtain bijective restriction $\psi^*|_{\Dmt(\alpha)} \colon \Dmt(\alpha) \to \Dm(\alpha)$ for $\alpha \in \Sigmat^a$.
\end{proof}

\subsection{Intermezzo on Finite Quotients}
\label{sec:interm-finite-quot}

In this section we consider the case that $\Ht/H$ is finite as it plays a crucial role in the remaining cases of Proposition \ref{prop:pullback-col}.

As $\dim G/H = \dim G/\Ht$, the morphism $\psi$ maps no color dominantly onto $G/\Ht$. Moreover, as $\Ht \subseteq N_G(H)$ (see Remark \ref{rem:normalizer-finite-incl}), the finite reductive group $\Gamma = \Ht/H$ acts on $G/H$ by right translations such that we may regard $\psi$ as the quotient morphism $G/H \to ( G / H ) / \Gamma \cong G / \Ht$. Then $\Gamma$ acts on $\Dm$ and the $\Gamma$-orbits in $\Dm$ can be identified with $\Dmt$. In particular, $\psi(D) \in \Dmt$ for any $D \in \Dm$ (cf.~\cite[Theorem 1.2.3.6]{ADHL}).

\begin{prop}
  \label{prop:M-Sigma-Sp-fq}
  We have:
  \begin{enumerate}
  \item $\Mmt$ is a sublattice of $\Mm$ of finite index.
  \item $\cone(\Sigmat) = \cone( \Sigma )$.
  \item $S^p = \St^p$.
  \end{enumerate}
\end{prop}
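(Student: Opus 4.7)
The strategy is to exploit the finite-quotient structure noted just above the proposition: $\Gamma = \Ht/H$ is a finite reductive group acting freely on $G/H$ by right translations, $\psi \colon G/H \to G/\Ht$ is the geometric quotient, and $\Dmt$ is identified with $\Dm/\Gamma$. Each of the three parts should follow by combining this picture with the general facts established earlier in the section.

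For (1), I would use a norm/averaging argument on $B$-semi-invariant rational functions. Given $f_\chi \in \C(G/H)^{(B)}$ of weight $\chi \in \Mm$, the product $\prod_{\gamma \in \Gamma} \gamma^* f_\chi$ is simultaneously $\Gamma$-invariant and $B$-semi-invariant of weight $|\Gamma|\chi$. Since $\C(G/\Ht) = \C(G/H)^{\Gamma}$, this product descends to an element of $\C(G/\Ht)^{(B)}$, showing $|\Gamma|\chi \in \Mmt$. Hence $|\Gamma|\Mm \subseteq \Mmt \subseteq \Mm$, which forces the index $[\Mm : \Mmt]$ to be finite.

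Part (2) should then follow as a short corollary: (1) yields $\Mmt_\Q = \Mm_\Q$, and Proposition \ref{prop:a-roots}, which applies to any $\Ht \in \Hm_H$, immediately gives $\cone(\Sigmat) = \cone(\Sigma) \cap \Mmt_\Q = \cone(\Sigma)$.

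For (3), the inclusion $S^p \subseteq \St^p$ is already covered by Proposition \ref{prop:Sp}. For the reverse inclusion I would use the characterization $\alpha \in S^p \Leftrightarrow \Dm(\alpha) = \emptyset$ and its analogue on $G/\Ht$, reducing the problem to proving $\Dm(\alpha) = \emptyset \Leftrightarrow \Dmt(\alpha) = \emptyset$. The main obstacle, and the step that makes essential use of finiteness, is the implication $\Dm(\alpha) \neq \emptyset \Rightarrow \Dmt(\alpha) \neq \emptyset$: starting from $D \in \Dm(\alpha)$, i.e., $P_\alpha D \neq D$, I would assume toward contradiction that $P_\alpha \psi(D) = \psi(D)$, deduce $P_\alpha D \subseteq \psi^{-1}(\psi(D)) = \bigcup_{\gamma \in \Gamma} \gamma D$, and then use irreducibility of $P_\alpha D$ together with the dimension constraint $\dim D = \dim \gamma D$ to force $P_\alpha D \subseteq \gamma D$ for some $\gamma \in \Gamma$ and hence $P_\alpha D = D$, contradicting $D \in \Dm(\alpha)$.
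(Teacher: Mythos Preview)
Your argument is correct in all three parts. The norm trick for (1) is the standard way to show $|\Gamma|\,\Mm \subseteq \Mmt$, part (2) is an immediate consequence once you combine (1) with Proposition~\ref{prop:a-roots}, and your contradiction argument for (3) is sound: $P_\alpha D$ is irreducible (image of the irreducible $P_\alpha \times D$), so containment in the finite union $\bigcup_{\gamma}\gamma D$ forces $P_\alpha D \subseteq \gamma_0 D$ for some $\gamma_0$; then $D \subseteq P_\alpha D \subseteq \gamma_0 D$ together with equality of dimensions and irreducibility gives $D = \gamma_0 D$, whence $P_\alpha D = D$.

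However, your route differs from the paper's. The paper does not argue any of this directly; it simply invokes \cite[Lemma~3.1.5]{Losev:Unique}, which packages exactly these statements (and more, e.g., the identification of $\Dmt$ with $\Gamma$-orbits in $\Dm$ used already in the preamble). Your approach has the advantage of being self-contained and transparent about \emph{where} finiteness enters---the averaging in (1) and the finite fibre $\psi^{-1}(\psi(D)) = \Gamma\cdot D$ in (3)---whereas the paper's citation keeps the exposition short but outsources the content. Both are perfectly acceptable; yours would be preferable in a context where the reader may not have Losev's lemma at hand.
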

\begin{proof}
  Follows from \cite[Lemma 3.1.5]{Losev:Unique}.
\end{proof}

\begin{prop}
  \label{prop:type-a-to-type-2a}
  For every $\alpha \in \Sigma^a \setminus \Sigmat^a$, we have $2\alpha \in \Sigmat$ and $\rho(D) = \tfrac{1}{2} \check{\alpha}|_\Mm$ for every $D \in \Dm(\alpha)$.
\end{prop}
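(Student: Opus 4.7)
The plan is to split the statement into its two assertions ($2\alpha\in\Sigmat$; and $\rho(D)=\tfrac12\check\alpha|_\Mm$ for $D\in\Dm(\alpha)$) and prove them in turn, using the finite-quotient facts already established in Proposition~\ref{prop:M-Sigma-Sp-fq} together with the classification of spherical roots in Table~\ref{tab:sr} and an averaging argument over the finite group $\Gamma=\Ht/H$.

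\emph{Step 1 (proving $2\alpha\in\Sigmat$).} First I would invoke $\cone(\Sigmat)=\cone(\Sigma)$ from Proposition~\ref{prop:M-Sigma-Sp-fq}, so the ray $\Q_{\ge0}\alpha\subseteq\cone(\Sigma)$ must meet $\Sigmat$ in some $\gamma=c\alpha$ with $c>0$. Since $\alpha$ is primitive in $\Mm$ and $\gamma\in\Mmt\subseteq\Mm$, one has $c\in\Z_{>0}$. Because $\gamma$ is a spherical root of $G$ with support contained in $\{\alpha\}$, Table~\ref{tab:sr} forces $\gamma\in\{\alpha,2\alpha\}$ (only the two $\mathsf{A}_1$-rows are relevant). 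The case $c=1$ would give $\alpha\in\Sigmat\cap S=\Sigmat^a$, contradicting our hypothesis, so $c=2$ and $2\alpha\in\Sigmat$.

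\emph{Step 2 (reducing the $\rho$-claim to equality of the two colors above $\alpha$).} Axiom $(\operatorname{A2})$ for $\Sm$ gives $\Dm(\alpha)=\{D_1,D_2\}$ with $\rho(D_1)+\rho(D_2)=\check\alpha|_\Mm$. Since $\alpha\notin\Sigmat$, the set $\Dmt(\alpha)$ has a single element $\Dt$, which is of type $2a$ (as $2\alpha\in\Sigmat$). Using the identification of $\Dmt$ with $\Gamma$-orbits on $\Dm$ from Section~\ref{sec:interm-finite-quot} together with $\varsigma(\psi(D))=\varsigma(D)$, the preimage of $\Dmt(\alpha)$ is exactly $\Dm(\alpha)=\{D_1,D_2\}$, on which $\Gamma$ must act transitively. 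It therefore suffices to show $\rho(D_1)=\rho(D_2)$, since then both equal $\tfrac12\check\alpha|_\Mm$.

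\emph{Step 3 (the averaging/descent argument).} For every $\chi\in\Mmt$, the function $f_\chi\in\C(G/H)^{(B)}$ can be chosen to be the pullback of a $B$-semi-invariant on $G/\Ht$, hence $\Gamma$-invariant. For any $\gamma_0\in\Gamma$ with $\gamma_0\cdot D_1=D_2$ this gives
\[
  \langle\rho(D_1),\chi\rangle=\nu_{D_1}(f_\chi)=\nu_{D_2}(\gamma_0\cdot f_\chi)=\nu_{D_2}(f_\chi)=\langle\rho(D_2),\chi\rangle,
\]
so $\rho(D_1)-\rho(D_2)\in\Nm$ vanishes on $\Mmt$. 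Since $\Mmt\subseteq\Mm$ has finite index by Proposition~\ref{prop:M-Sigma-Sp-fq}, $\Mmt_\Q=\Mm_\Q$, and $\Nm$ is torsion-free, this forces $\rho(D_1)=\rho(D_2)$, completing the proof.

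The only delicate point is the identification of the $\Gamma$-orbit structure on $\Dm(\alpha)$ in Step~2: one needs to rule out that $\Gamma$ fixes each $D_i$ pointwise (which would leave two colors above $\Dt$ and contradict $|\Dmt(\alpha)|=1$) and verify the equality $\varsigma(\psi(D))=\varsigma(D)$ in this finite-quotient setting; both follow cleanly from the $\Gamma$-equivariance of the $P_\alpha$-action. Everything else is a direct combination of Proposition~\ref{prop:M-Sigma-Sp-fq}, Table~\ref{tab:sr}, and axioms $(\operatorname{A1})$--$(\operatorname{A2})$.
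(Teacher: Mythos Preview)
Your proof is correct, and Step~1 coincides with the paper's argument. For the second assertion, however, you take a genuinely different route. The paper uses the line-bundle and Foschi machinery from Section~\ref{sec:prep-obs}: it computes $\langle\check\alpha,\lambda_{\Dt}\rangle=2$ via Proposition~\ref{prop:foschi}, deduces $\psi^*(\Dt)=D_1+D_2$, and then invokes the valuation argument of Proposition~\ref{prop:rho-of-pullback} to obtain $\rho(D_i)|_{\Mmt}=\rhot(\Dt)=\tfrac12\check\alpha|_{\Mmt}$, finishing by the finite-index step. You instead combine axiom $(\mathrm{A2})$ with a direct $\Gamma$-equivariance argument: transitivity of $\Gamma$ on $\Dm(\alpha)$ plus $\Gamma$-invariance of pulled-back $B$-eigenfunctions gives $\rho(D_1)|_{\Mmt}=\rho(D_2)|_{\Mmt}$, hence $\rho(D_1)=\rho(D_2)=\tfrac12\check\alpha|_{\Mm}$. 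Your approach is more elementary and self-contained for this proposition, as it bypasses Foschi's result and the $G$-linearized line-bundle setup entirely; the paper's approach, on the other hand, integrates the statement into the uniform pullback framework of Section~\ref{sec:pullback-col}, which it reuses for colors of all types.
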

\begin{proof}
  As $\Mmt$ is a sublattice of $\Mm$ of finite index and $\cone( \Sigmat ) = \cone( \Sigma )$ (see Proposition \ref{prop:M-Sigma-Sp-fq}), it follows that $2 \alpha \in \Sigmat$ for every $\alpha \in \Sigma^a \setminus \Sigmat^a$.

  Let $\alpha \in \Sigma^a \setminus \Sigmat^a$, $\Dmt(\alpha) = \{ \Dt \}$ ($\Dt$ is of type $2a$) and $\Dm(\alpha) = \set{ D_1, D_2 }$. Recall from Remark \ref{rem:pullback-sdt}, that $\lambda_\Dt = \sum_{D \in \Dm} \mu_{\Dt, D} \lambda_D$ for some $\mu_{\Dt, D} \in \Z_{\ge0}$. By Proposition \ref{prop:foschi}, we obtain
  \[
    2 = \langle \check{\alpha}, \lambda_\Dt \rangle = \sum_{D \in \Dm} \mu_D \langle \check{\alpha}, \lambda_D \rangle = \mu_{D_1} \langle \check{\alpha}, \lambda_{D_1} \rangle + \mu_{D_2} \langle \check{\alpha}, \lambda_{D_2} \rangle = \mu_{D_1} + \mu_{D_2}.
  \]
  As $\psi(D_i)$ for $i = 1, 2$ is a color in $G/\Ht$ which gets moved by $P_\alpha$, it follows that $\mu_{D_1} = \mu_{D_2} = 1$, i.e., $\psi^*(\Dt) = D_1 + D_2$. With the same arguments as in the proof of Proposition \ref{prop:rho-of-pullback}, we obtain $\rho(D_i)|_\Mmt = \tfrac{1}{2} \check{\alpha}|_\Mmt$, and the assertion follows by Proposition \ref{prop:M-Sigma-Sp-fq} (1).
\end{proof}

\begin{prop}
  \label{prop:pullback-col-fq}
  For every $\alpha \in S \setminus S^p$ and every $\Dt \in \Dmt(\alpha)$, we have $\psi^*(\Dt) \in \Dm(\alpha)$ except for $\alpha \in \Sigma^a \setminus \Sigmat^a$, in which case we have $\psi^*(\Dt) = D_1 + D_2$ where $\Dm(\alpha) = \set{ D_1, D_2 }$.
\end{prop}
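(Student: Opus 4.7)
The plan is to decompose the claim into three disjoint cases according to whether $\alpha \in \Sigmat^a$, $\alpha \in \Sigma^a \setminus \Sigmat^a$, or $\alpha \in (S \setminus S^p) \setminus \Sigma^a$, and to observe that the first two are immediate from results already in this section. For $\alpha \in \Sigmat^a$, Corollary \ref{cor:a-col-come-from-a-col} exhibits $\psi^*$ as a bijection $\Dmt(\alpha) \to \Dm(\alpha)$, so $\psi^*(\Dt) \in \Dm(\alpha)$. For $\alpha \in \Sigma^a \setminus \Sigmat^a$, the desired identity $\psi^*(\Dt) = D_1 + D_2$ is precisely the conclusion of the argument used in the proof of Proposition \ref{prop:type-a-to-type-2a}. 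Only the third case requires genuinely new input.

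For that case, the first ingredient I would record is that $\Gamma \coloneqq \Ht/H$ acts \emph{freely} on $G/H$ by right translation: the identity $g\gamma H = gH$ forces $\gamma \in H$, hence $\gamma$ is trivial in $\Gamma$. Thus $\psi$ is a finite \'{e}tale morphism, and the pullback of any $B$-invariant prime divisor $\Dt$ is unramified, giving $\psi^*(\Dt) = \sum_{D' \in \psi^{-1}(\Dt)} D'$ with every multiplicity equal to one.

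The second ingredient is that the right $\Gamma$-action commutes with left multiplication by each minimal parabolic $P_\alpha$, so the map $\varsigma$ is constant on $\Gamma$-orbits in $\Dm$. Combined with the identification $\Dmt \cong \Dm/\Gamma$ recalled in Section \ref{sec:interm-finite-quot}, this yields $\varsigma(D') = \varsigma(\Dt)$ for every $D' \in \psi^{-1}(\Dt)$, and in particular $\psi^{-1}(\Dt) \subseteq \Dm(\alpha)$. Since $\alpha \notin \Sigma^a$ by assumption, $\Dm(\alpha)$ consists of a single color $D$, so $\psi^{-1}(\Dt) = \set{D}$ and $\psi^*(\Dt) = D$, concluding this case.

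The main obstacle will be cleanly recording the \'{e}tale/freeness statement together with the invariance $\varsigma(D') = \varsigma(\psi(D'))$; both are the natural analogues, for $H \subseteq \Ht \subseteq N_G(H)$, of the facts that Section \ref{sec:homog-spher-data-1} establishes for the projection to the full normalizer, so no new geometric input is needed. Once these are in place, the remaining case distinction is essentially combinatorial, powered by the identity $\cone(\Sigmat) = \cone(\Sigma)$ from Proposition \ref{prop:M-Sigma-Sp-fq}.
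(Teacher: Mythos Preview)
Your decomposition into the three cases $\alpha \in \Sigmat^a$, $\alpha \in \Sigma^a \setminus \Sigmat^a$, and $\alpha \in (S \setminus S^p) \setminus \Sigma^a$ is exactly the paper's, and your treatment of the first two cases via Corollary~\ref{cor:a-col-come-from-a-col} and Proposition~\ref{prop:type-a-to-type-2a} matches the paper verbatim.

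For the third case your argument is correct but genuinely different from the paper's. The paper does not invoke freeness of the $\Gamma$-action or \'etaleness of $\psi$ here; instead it argues combinatorially. It first establishes $\varsigma(\Dt) = \varsigma(D)$ by a small case analysis (either both equal $\{\alpha\}$, or both equal $\{\alpha,\alpha'\}$ when some $\gamma$ with support $\{\alpha,\alpha'\}$ lies in $\Sigma$, using $\cone(\Sigmat)=\cone(\Sigma)$ to see that this happens simultaneously on both sides). Then it applies Proposition~\ref{prop:foschi} and Remark~\ref{rem:pullback-sdt} to conclude $\lambda_\Dt = \lambda_D$, hence $\psi^*(\Dt)=D$. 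Your route---freeness of $\Gamma$ on $G/H$ gives $\psi$ \'etale and hence all multiplicities in $\psi^*(\Dt)$ equal to $1$, and $P_\alpha$-equivariance of $\psi$ together with connectedness of $P_\alpha$ gives $\varsigma(D')=\varsigma(\Dt)$ for every component $D'$ of $\psi^{-1}(\Dt)$---is cleaner in that it avoids the $\{\alpha\}$ versus $\{\alpha,\alpha'\}$ case split and never needs to compare the $B$-weights $\lambda_D$. The paper's approach, on the other hand, stays entirely within the Foschi/weight machinery already set up in Section~\ref{sec:prep-obs} and does not require recording the freeness or the $\varsigma$-invariance separately. Both are short; yours is arguably more transparent about \emph{why} the finite-quotient hypothesis makes this case easy.
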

\begin{proof}
  For $\alpha \in \Sigmat^a$ resp.~$\alpha \in \Sigma^a \setminus \Sigmat^a$, the assertion follows by Corollary \ref{cor:a-col-come-from-a-col} resp.~Proposition \ref{prop:type-a-to-type-2a}.

  If $\alpha \in S \setminus \Sigma \subseteq S \setminus \Sigmat$ (see Proposition \ref{prop:a-roots}), then $| \Dmt(\alpha) | = | \Dm(\alpha) | = 1$, say $\Dt \in \Dmt(\alpha)$ and $D \in \Dm(\alpha)$. We have $\varsigma( \Dt ) = \varsigma(D) = \set{ \alpha }$ unless there exists a spherical root $\gamma \in \Sigma$ with $\Supp(\gamma) = \set{ \alpha, \alpha'}$ for a simple root $\alpha' \in S$ orthogonal to $\alpha$, in which case $\varsigma(D) = \set{ \alpha, \alpha' }$. By Proposition \ref{prop:M-Sigma-Sp-fq}, some multiple of $\gamma$ will be a spherical root of $G/\Ht$ and thus $\varsigma(\Dt) = \set{ \alpha, \alpha' }$. By Proposition \ref{prop:foschi}, we obtain $\lambda_\Dt = \lambda_D$ (see Remark \ref{rem:pullback-sdt}) and the assertion follows.
\end{proof}

\subsection{Pullback of Colors of Type $b$}
\label{sec:pullb-col-b}

We continue to study the (general) $G$-equivariant pullback of a color and drop the assumption $\Ht / H$ finite. With similar arguments as in the proof of Proposition \ref{prop:pullback-col-type-a}, we get the following statement.

\begin{prop}
  \label{prop:pullback-col-type-b}
  If $\Dt \in \Dmt^b$, then $\psi^*( \Dt ) = D_1 + \ldots + D_k$ for colors $D_1, \ldots, D_k \in \Dm^a \cup \Dm^b$ and the union $\varsigma(\Dt) = \varsigma(D_1) \cup \ldots \varsigma(D_k)$ is disjoint.
\end{prop}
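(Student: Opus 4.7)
The plan is to mimic the proof of Proposition~\ref{prop:pullback-col-type-a} almost verbatim, the only new ingredient being that colors of type $2a$ must be excluded from the pullback. First, I would pick an arbitrary $\alpha \in \varsigma(\Dt)$. By Proposition~\ref{prop:Sp} we have $\Dm(\alpha) \neq \emptyset$, and Proposition~\ref{prop:foschi} yields $\langle \check{\alpha}, \lambda_\Dt \rangle = 1$ since $\Dt \in \Dmt^b$. Combining this with the expansion from Remark~\ref{rem:pullback-sdt}, I would obtain
\[
  1 = \sum_{D \in \Dm(\alpha)} \mu_{\Dt, D} \langle \check{\alpha}, \lambda_D \rangle,
\]
where, by a second application of Proposition~\ref{prop:foschi}, each summand equals $\mu_{\Dt, D}$ if $D \in \Dm^a \cup \Dm^b$ and $2\mu_{\Dt, D}$ if $D \in \Dm^{2a}$.

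The integrality of this equation forces every $D \in \Dm(\alpha) \cap \Dm^{2a}$ to satisfy $\mu_{\Dt, D} = 0$ and singles out a unique $D \in \Dm(\alpha) \cap (\Dm^a \cup \Dm^b)$ with $\mu_{\Dt, D} = 1$. Ranging over all $\alpha \in \varsigma(\Dt)$, I would collect the distinct colors $D_1, \ldots, D_k \in \Dm^a \cup \Dm^b$ arising in this way. Since Remark~\ref{rem:pullback-sdt} guarantees $\varsigma(D) \subseteq \varsigma(\Dt)$ whenever $\mu_{\Dt, D} > 0$, every color contributing nontrivially to $\psi^*(\Dt)$ belongs to $\Dm(\alpha)$ for some $\alpha \in \varsigma(\Dt)$ and therefore appears among the $D_i$. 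This gives $\psi^*(\Dt) = D_1 + \ldots + D_k$.

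Disjointness of $\varsigma(\Dt) = \varsigma(D_1) \cup \ldots \cup \varsigma(D_k)$ is then literally the argument from Proposition~\ref{prop:pullback-col-type-a}: any $\alpha \in \varsigma(D_i) \cap \varsigma(D_j)$ with $i \neq j$ would contribute at least $1 + 1 = 2$ to $\langle \check{\alpha}, \lambda_\Dt \rangle = 1$, a contradiction. The only substantive deviation from the type-$a$ case is thus the exclusion of $2a$-contributions, and I do not expect this to be the main obstacle since it is handed to us for free by the arithmetic of the Foschi identity; the rest of the argument is a carbon copy of the proof of Proposition~\ref{prop:pullback-col-type-a} with $\Dm^a$ replaced throughout by $\Dm^a \cup \Dm^b$.
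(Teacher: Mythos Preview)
Your proposal is correct and matches the paper's approach exactly: the paper itself gives no detailed proof of this proposition, stating only that it follows ``with similar arguments as in the proof of Proposition~\ref{prop:pullback-col-type-a}''. You have spelled out precisely those similar arguments, including the one additional observation (ruling out type-$2a$ contributions via the parity in Foschi's identity) that distinguishes the type-$b$ case from the type-$a$ case.
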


\begin{prop}
  \label{prop:b-gives-ab}
  Let $\Dt \in \Dmt^b$. Then $\psi^*(\Dt) \in \Dm^a \cup \Dm^b$.
\end{prop}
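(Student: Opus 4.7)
The plan is to decompose $\psi$ via the Stein-type factorization of Proposition~\ref{prop:divide-prob}, writing $\psi = \psi_1 \circ \psi_0$ with $\psi_0 \colon G/H \to G/\Ht_0$ co-connected ($\Ht_0 \coloneqq H \Ht^\circ$, so $\Ht_0/H$ is connected) and $\psi_1 \colon G/\Ht_0 \to G/\Ht$ finite. Let $\Sm_0 = (\Mm_0, \Sigma_0, S_0^p, \Dm_0^a)$ denote the Luna datum of $\Ht_0$ and $\Dm_0$ its set of colors. The statement will reduce to showing (a) $\psi_1^*(\Dt) = D'$ is a single color in $\Dm_0^b$ and (b) $\psi_0^*(D') \in \Dm^a \cup \Dm^b$.

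For (a) I will apply Proposition~\ref{prop:pullback-col-fq} (with $\Ht_0 \subseteq \Ht$ playing the role of $H \subseteq \Ht$) to an arbitrary $\alpha \in \varsigma(\Dt)$. Since $\Dt \in \Dmt^b$, both $\alpha$ and $2\alpha$ lie outside $\Sigmat$; the exceptional case $\alpha \in (\Sigma_0 \cap S) \setminus (\Sigmat \cap S)$ is precluded by Proposition~\ref{prop:type-a-to-type-2a}, which would then force $2\alpha \in \Sigmat$. Hence $\psi_1^*(\Dt) = D'$ is a single color in $\Dm_0(\alpha)$, and inspection of the argument gives $\lambda_{D'} = \lambda_\Dt$, so $\varsigma(D') = \varsigma(\Dt)$. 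The same case analysis shows $D' \in \Dm_0^b$: no $\beta \in \varsigma(D')$ can lie in $\Sigma_0$ (else $\beta$ or $2\beta$ would lie in $\Sigmat$ via the exception analysis), and no $2\beta$ with $\beta \in \varsigma(D')$ can lie in $\Sigma_0$ either. For the latter I will use Proposition~\ref{prop:M-Sigma-Sp-fq} to identify $\cone(\Sigma_0) = \cone(\Sigmat)$, so that the primitive generator in $\Mmt$ of the extremal ray $\Q_{\ge 0}\beta$ of $\cone(\Sigmat)$ lies in $\Sigmat$; by the rank-one classification in Table~\ref{tab:sr}, that generator is $\beta$ or $2\beta$, again contradicting $\Dt \in \Dmt^b$.

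For (b) the key observation is that $G/H \to G/\Ht_0$ is an \'etale-locally trivial fibration with irreducible fiber $\Ht_0/H$ (arising from the principal $\Ht_0$-bundle $G \to G/\Ht_0$), so the preimage $\psi_0^{-1}(D')$ of the irreducible divisor $D'$ is itself irreducible; equivalently, Knop's correspondence \cite[Section~4]{Knop:LV} identifies $\Dm \setminus \Dm_{\psi_0}$ with $\Dm_0$ and singles out a unique color $D \in \Dm$ with $\overline{\psi_0(D)} = D'$. Applying Proposition~\ref{prop:pullback-col-type-b} to $\psi_0$ and the type-$b$ color $D'$ then gives $\psi_0^*(D') = D$ with multiplicity one and $D \in \Dm^a \cup \Dm^b$. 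Composing, $\psi^*(\Dt) = D$, as required. The hardest point will be the type-$2a$ exclusion for $D'$ in the finite step: one genuinely needs the rank-one classification of spherical roots to bound the possible index of $\Mmt$ inside $\Mm_0$ along the ray $\Q_{\ge 0}\beta$, rather than some purely axiomatic argument.
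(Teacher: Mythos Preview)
Your proof is correct and takes a genuinely different route from the paper's argument, though both ultimately use the Stein factorization through $\Ht_0 = H\Ht^\circ$. The paper starts by applying Proposition~\ref{prop:pullback-col-type-b} directly to $\psi$ and then does a case analysis on $\varsigma(\Dt)$: the only nontrivial case is $\varsigma(\Dt) = \{\alpha,\beta\}$ with $\alpha+\beta \in \Sigmat$, and within that the hardest subcase $\alpha,\beta \in \Sigma^a$ is ruled out by passing to $\Ht_0$ and deriving a contradiction with the colored subspace $(\Nm^1_\Q,\Dm^1)$ attached to $\Ht_0$. By contrast, you factor $\psi = \psi_1 \circ \psi_0$ from the outset, use Proposition~\ref{prop:pullback-col-fq} and Proposition~\ref{prop:type-a-to-type-2a} (plus the rank-one classification) to see that $\psi_1^*(\Dt)$ is a single color $D' \in \Dm_0^b$, and then exploit the geometric fact that $\psi_0$ has irreducible fibres $\Ht_0/H$, so $\psi_0^{-1}(D')$ is irreducible and Proposition~\ref{prop:pullback-col-type-b} (applied to $\psi_0$) forces $\psi_0^*(D')$ to be a single color in $\Dm^a \cup \Dm^b$. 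Your argument is more structural and avoids the explicit case analysis on $\varsigma(\Dt)$; the paper's is more elementary in that the only extra ingredient beyond Proposition~\ref{prop:pullback-col-type-b} is a contradiction via the description of $\Sm_0$ in terms of the colored subspace, without invoking fibre irreducibility. Both are short; yours makes the role of the Stein factorization more transparent.
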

\begin{proof}
  If $|\varsigma(\Dt)|=1$, then the assertion follows by Proposition \ref{prop:pullback-col-type-b}. It remains the case $\varsigma(\Dt) = \set{ \alpha, \beta }$ for two orthogonal simple roots $\alpha, \beta \in S$ and $\gamma \coloneqq \lambda(\alpha + \beta) \in \Sigmat$ for $\lambda \in \set{ \tfrac{1}{2}, 1 }$. As any spherical root is a nonnegative linear combination of simple roots, we obtain, by Proposition \ref{prop:a-roots}, either $\alpha, \beta \in \Sigma$ or $\lambda' \gamma \in \Sigma$ for $\lambda' \in \set{ \tfrac{1}{2}, 1 }$ (observe that $2\alpha \in \Sigma$ or $2\beta \in \Sigma$ is not possible by Proposition \ref{prop:foschi}). In the latter case, $\psi^*(\Dt) \in \Dm^b$ and we are left with $\alpha, \beta \in \Sigma$. We distinguish the two possibilities for the coefficient $\lambda$.

  If $\gamma = \tfrac{1}{2} ( \alpha + \beta )$, we may assume $\Dm(\alpha) = \set{ D, D' }$ such that $D$ appear in $\psi^*(\Dt)$. Similarly as in the proof of Proposition \ref{prop:rho-of-pullback}, we obtain $\rhot(\Dt) = l \rho(D)|_\Mmt$ for some $l \in \Q_{>0}$. As $\langle \rhot(\Dt), \gamma \rangle = 1$, it follows that $\langle \rho(D), \alpha + \beta \rangle > 0$. Since $\gamma \in \Mmt \subseteq \Mm$, we have $\langle \rho(D), \gamma \rangle \in \Z$ which implies $\langle \rho(D) , \beta \rangle > 0$. Hence $\varsigma(D) = \set{ \alpha, \beta }$ and $\psi^*(\Dt) = D$.

  Finally, if $\gamma = \alpha + \beta$, we need to exclude the case $\psi^*(\Dt) = D_1 + D_2$ for $D_1, D_2 \in \Dm^a$. By Remark \ref{rem:pullback-sdt} and Proposition \ref{prop:foschi}, we may assume $\varsigma(D_1) = \set{ \alpha }$ and $\varsigma(D_2) = \set{ \beta }$. Again as in the proof of Proposition \ref{prop:rho-of-pullback}, we obtain $\rhot(\Dt) = l_i \rho(D_i)|_\Mmt$ for some $l_i \in \Q_{>0}$. The equation $\langle \rhot(\Dt), \alpha + \beta \rangle = 2$ implies $\langle \rho(D_1), \beta \rangle = \langle \rho(D_2), \alpha \rangle = 0$. Let $\Dm(\alpha) = \set{ D_1, D_1' }$ and $\Dm(\beta) = \set{ D_2, D_2' }$ and observe that $\langle \rho(D_i), \delta \rangle = \langle \rho(D_i'), \delta \rangle$ for $\delta = \alpha, \beta$ and $i = 1, 2$ by axiom $(A2)$. The group $\Ht_0 \coloneqq H\Ht^\circ$ is a spherical subgroup of $G$ lying between $H$ and $\Ht$ such that $\Ht_0/H$ is connected and $\Ht/\Ht_0$ is finite. Let us denote the Luna datum of $\Ht_0$ by $\Sm_0 = ( \Mm_0, \Sigma_0, S^p_0, \Dm^a_0 )$. We have the following commutative diagram of natural morphisms
  \begin{center}
  \begin{tikzcd}
    G/H \ar[r,-latex,"\psi_0"] \ar[rd,-latex,"\psi"']& G/\Ht_0 \ar[d,-latex,"\psit"]\\
    & G/\Ht \text{.}
  \end{tikzcd}
  \end{center}

  We have a natural inclusion $\Mm_0 \hookrightarrow \Mm$ (see \cite[Section 4]{Knop:LV}). By Proposition \ref{prop:M-Sigma-Sp-fq}, $\Mmt \subseteq \Mm_0$ is a subgroup of finite index and $\cone(\Sigma_0) = \cone(\Sigmat)$. Hence $\alpha + \beta \in \Sigma_0$ and $D_0 \coloneqq \psit^*(\Dt) \in \Dm^b_0$ with $\varsigma(D_0) = \set{ \alpha, \beta }$. Let $(\Nm^1_\Q, \Dm^1)$ be the colored subspace associated to  $\Ht_0$ (see \cite[Section 4]{Knop:LV}). As $D_1', D_2'$ do not appear in $\psi^*(\Dt)$, $\psi$ maps $D_1', D_2'$ dominantly onto $G/\Ht$. As $\psit$ maps no color dominantly onto $G/\Ht$, we obtain $D_1', D_2' \in \Dm^1$, but above we have seen $\rho(D_i') \not \in \Nm^1_\Q = \Mm_0^\perp = \Mmt^\perp \subseteq \Nm_\Q$ for $i = 1, 2$. Contradiction.
\end{proof}

\subsection{Pullback of Colors of Type $2a$}
\label{sec:pullback-col-2a}

\begin{prop}
  If $\Dt \in \Dm^{2a}$ with $\varsigma(\Dt) = \set{ \alpha }$, then either $\psi^*(\Dt) \in \Dm^{2a}$ or $\psi^*(\Dt) = D_1 + D_2$ for two colors $D_1, D_2 \in \Dm^a$ with $\varsigma(D_i) = \set{ \alpha }$ for $i = 1, 2$.
\end{prop}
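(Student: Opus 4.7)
The plan is to split on whether $\alpha$ or $2\alpha$ lies in $\Sigma$, a dichotomy forced by the hypothesis: since $2\alpha \in \Sigmat$ and Proposition \ref{prop:a-roots} gives $\cone(\Sigmat) = \cone(\Sigma) \cap \Mmt_\Q$, $2\alpha$ is a nonnegative combination of elements of $\Sigma$. Each such element must have support contained in $\set{\alpha}$, and the only spherical roots of $G$ with support on a single simple root are $\alpha$ and $2\alpha$; hence $\Sigma$ contains one of them.

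As a common preliminary step, I would use Remark \ref{rem:pullback-sdt}: writing $\psi^*(\Dt) = \sum_D \mu_{\Dt, D} D$, the identity $\varsigma(\Dt) = \bigcup_{\mu_{\Dt, D} > 0} \varsigma(D) = \set{\alpha}$ forces every contributing $D$ to satisfy $\varsigma(D) = \set{\alpha}$, i.e., $D \in \Dm(\alpha)$. Combining the weight identity $\lambda_\Dt = \sum_D \mu_{\Dt, D} \lambda_D$ with Proposition \ref{prop:foschi} then yields $\sum_{D \in \Dm(\alpha)} \mu_{\Dt, D} \langle \check{\alpha}, \lambda_D \rangle = \langle \check{\alpha}, \lambda_\Dt \rangle = 2$. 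In the first sub-case $2\alpha \in \Sigma$, the set $\Dm(\alpha) = \set{D}$ consists of a single color of type $2a$ with $\langle \check{\alpha}, \lambda_D \rangle = 2$, so $\mu_{\Dt, D} = 1$ and $\psi^*(\Dt) = D \in \Dm^{2a}$.

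In the remaining sub-case $\alpha \in \Sigma$, $\Dm(\alpha) = \set{D_1, D_2}$ consists of two distinct type-$a$ colors with $\langle \check{\alpha}, \lambda_{D_i} \rangle = 1$, so $\mu_{\Dt, D_1} + \mu_{\Dt, D_2} = 2$. The delicate point is to exclude the unbalanced possibility $\set{\mu_{\Dt, D_1}, \mu_{\Dt, D_2}} = \set{2, 0}$. For this I would factor $\psi = \psit \circ \psi_0$ through the intermediate spherical subgroup $\Ht_0 = H \Ht^\circ$ provided by Proposition \ref{prop:divide-prob}, giving a connected quotient $\psi_0 \colon G/H \to G/\Ht_0$ and a finite quotient $\psit \colon G/\Ht_0 \to G/\Ht$. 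The key intermediate claim is that $\alpha$ still lies in the set $\Sigma_0$ of spherical roots of $G/\Ht_0$: since $2\alpha \in \Mmt \subseteq \Mm_0$ we get $\alpha \in \Mm \cap \Mmt_\Q = \Mm_0$, and $\alpha$ remains primitive there because it is primitive in the larger lattice $\Mm$; moreover the ray generated by $\alpha$ is extremal in $\cone(\Sigma)$ and lies entirely in $\Mmt_\Q$, so it remains extremal in $\cone(\Sigma_0) = \cone(\Sigma) \cap \Mmt_\Q$ (Proposition \ref{prop:a-roots} applied to $\psi_0$). Thus $\alpha \in \Sigma_0^a \setminus \Sigmat^a$, and Proposition \ref{prop:pullback-col-fq} applied to the finite quotient $\psit$ gives $\psit^*(\Dt) = D_1^0 + D_2^0$ with $\Dm_0(\alpha) = \set{D_1^0, D_2^0}$; Corollary \ref{cor:a-col-come-from-a-col} applied to the connected quotient $\psi_0$ supplies a bijection $\Dm_0(\alpha) \to \Dm(\alpha)$, say $D_i^0 \mapsto D_i$, whence $\psi^*(\Dt) = \psi_0^* \psit^*(\Dt) = D_1 + D_2$.

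The main obstacle is the primitivity-plus-extremality argument placing $\alpha$ in $\Sigma_0$: only with $\alpha$ genuinely a simple-root spherical root of $G/\Ht_0$ do the finite-quotient tools of Proposition \ref{prop:pullback-col-fq} apply to $\psit$ and exclude the unbalanced multiplicity pair $\set{2, 0}$; once $\alpha \in \Sigma_0$ is established, the remaining pieces fit together from results already proved earlier in Section \ref{sec:pullback-col}.
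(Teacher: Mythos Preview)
Your argument is correct, and the primitivity-plus-extremality step placing $\alpha$ in $\Sigma_0$ is sound: $\alpha \in \Mm \cap \Mmt_\Q = \Mm_0$, it is primitive in $\Mm_0$ because it is already primitive in the overlattice $\Mm$, and the ray $\Q_{\ge 0}\alpha$ is extremal in $\cone(\Sigma)$ and lies in $\Mmt_\Q$, hence is extremal in $\cone(\Sigma_0) = \cone(\Sigma) \cap \Mmt_\Q$.

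However, the paper takes a different and shorter route in the case $\alpha \in \Sigma$. Rather than factoring $\psi$ through $\Ht_0$ and assembling Proposition~\ref{prop:pullback-col-fq} with Corollary~\ref{cor:a-col-come-from-a-col}, the paper observes that the projections $G \to G/H$ and $G \to G/\Ht$ are \'etale-locally trivial, so pullback of Cartier divisors along them coincides with the set-theoretic preimage; consequently $\psi$ itself preserves reducedness, and the divisor $\psi^*(\Dt)$ is reduced. This forces every multiplicity $\mu_{\Dt,D}$ to lie in $\{0,1\}$, immediately excluding the unbalanced possibility $\{2,0\}$ and giving $\psi^*(\Dt) = D_1 + D_2$. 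The paper's approach thus introduces one geometric ingredient (reducedness of $\psi$) and finishes in a line; your approach is more internal to the combinatorics already developed in Section~\ref{sec:pullback-col}, and has the conceptual merit of exhibiting the type-$2a$ case as a formal consequence of the type-$a$ analysis together with the Stein-type factorization of Proposition~\ref{prop:divide-prob}.
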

\begin{proof}
  By \cite[Proposition 3]{Serre:Fibres}, the projections $G \to G/H$ and $G \to G/\Ht$ are locally trivial in \'{e}tale topology. As \'{e}tale maps preserve reducedness, it follows that the pullback of Cartier divisors coincides with the preimage of varieties. It follows that the morphism $\psi \colon G/H \to G/\Ht$ also preserves reducedness.

  By Proposition \ref{prop:Sp}, $\Dm(\alpha) \neq \emptyset$ and, by Proposition \ref{prop:foschi}, we obtain
  \[
    2 = \langle \check{\alpha}, \lambda_\Dt \rangle = \sum_{D \in \Dm} \mu_{\Dt, D} \langle \check{\alpha}, \lambda_D \rangle = \sum_{D \in \Dm(\alpha)} \mu_{\Dt, D} \langle \check{\alpha}, \lambda_D \rangle.
  \]
  By Proposition \ref{prop:a-roots}, either $\alpha \in \Sigma$ or $2\alpha \in \Sigma$. If $\Dm(\alpha) = \set{ D_1, D_2 } \subseteq \Dm^a$, we obtain, by the reducedness of $\psi$, $\mu_{\Dt, D_1} = \mu_{\Dt, D_2} = 1$, and thus $\psi^*( \Dt ) = D_1 + D_2$. Otherwise $\Dm(\alpha) = \set{ D }$ with $2\alpha \in \Sigma$ and $\psi^*(\Dt) = D$.
\end{proof}

\section{Finite Quotients}
\label{sec:finite-quotients}

In this section, we prove Theorem \ref{thm:bijection-finite-quot}. Our proof relies on a study of $( B \times H )$-semi-invariant rational functions on $G$ and a generalization of the combinatorial description \cite[Section 6]{Luna:TypeA}.

\subsection{$(B \times H)$-Semi-Invariant Functions}
\label{sec:b-h-semi-inv-func}

Let $H \subseteq G$ be a spherical subgroup, $B \subseteq G$ a Borel subgroup, $T \subseteq B$ a maximal torus and $\Sm = ( \Mm, \Sigma, S^p, \Dm^a )$ the Luna datum of $H$. The identity component $H^\circ$ is a spherical subgroup with Luna datum $\Sm^\circ = ( \Mm^\circ, \Sigma^\circ, S^p, (\Dm^\circ)^a)$ (see Proposition \ref{prop:M-Sigma-Sp-fq}). Recall that $H$ acts on $G / H^\circ$ by right translations such that $\varphi \colon G / H^\circ \to G / H$ is the corresponding quotient morphism which induces a surjective map $D^\circ \mapsto \varphi( D^\circ )$ from the colors $\Dm^\circ$ of $G/H^\circ$ onto the colors $\Dm$ of $G/H$. Moreover, $H$ acts on $\Dm^\circ$ and we may identify $\Dm$ with the set of $H$-orbits in $\Dm^\circ$ such that $\phi \colon \Dm^\circ \to \Dm, D^\circ \mapsto \varphi( D^\circ )$ is the associated quotient map.

\begin{lemma}
  \label{lem:H-orbit-a-col}
  If $\alpha \in ( \Sigma^\circ )^a \setminus \Sigma^a$, then $\Dm^\circ(\alpha) = \set{ D_1^\circ, D_2^\circ }$ is one $H$-orbit and $\varphi( D_1^\circ ) = \varphi( D_2^\circ )$. Further, the assignment which associates to any pair $\Dm^\circ( \alpha )$ the color $\varphi( D_i^\circ ) \in \Dm$ induces an injection
  \[
    \set{ \Dm^\circ( \alpha ) \with \alpha \in ( \Sigma^\circ )^a \setminus \Sigma^a } \hookrightarrow \Dm^{2a}; \Dm^\circ( \alpha ) = \set{ D_1^\circ, D_2^\circ } \mapsto \varphi( D_i^\circ ) \text{.}
  \]
\end{lemma}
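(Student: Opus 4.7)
The plan is to leverage the identification, from Section~\ref{sec:interm-finite-quot}, of $\varphi\colon G/H^\circ\to G/H$ with the quotient by the finite group $\Gamma\coloneqq H/H^\circ$ acting on $G/H^\circ$ by right translations, and to combine this with the pullback behavior of $2a$-colors from Proposition~\ref{prop:pullback-col-fq} together with the $\rho$-formula of Proposition~\ref{prop:type-a-to-type-2a}. Both propositions will be applied with $H^\circ$ in the role of \enquote{$H$} and $H$ in the role of \enquote{$\Ht$}, which is legitimate as $H/H^\circ$ is finite.

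The first part follows quickly. Section~\ref{sec:interm-finite-quot} tells me that $\phi\colon\Dm^\circ\to\Dm$ is the quotient map for the induced $\Gamma$-action on $\Dm^\circ$; hence every set-theoretic fiber of $\phi$ is a single $H$-orbit. Fixing $\alpha\in(\Sigma^\circ)^a\setminus\Sigma^a$, Proposition~\ref{prop:type-a-to-type-2a} yields $2\alpha\in\Sigma$ and
\[
\rho^\circ(D_1^\circ)=\rho^\circ(D_2^\circ)=\tfrac{1}{2}\check{\alpha}|_{\Mm^\circ},
\]
where $\Dm^\circ(\alpha)=\set{D_1^\circ,D_2^\circ}$. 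Since $\alpha$ is a spherical root of $G/H^\circ$ it lies in $S\setminus S^p$, so Proposition~\ref{prop:Sp} gives $\Dm(\alpha)\neq\emptyset$, and Proposition~\ref{prop:pullback-col-fq} then forces each $D\in\Dm(\alpha)$ to satisfy $\varphi^*(D)=D_1^\circ+D_2^\circ$. As $D=\phi(D_i^\circ)$ is determined by either of $D_1^\circ,D_2^\circ$, in fact $\Dm(\alpha)=\set{D}$; moreover $\alpha\in\varsigma(D)$ together with $2\alpha\in\Sigma$ places $D$ in $\Dm^{2a}$, and $\phi^{-1}(D)=\set{D_1^\circ,D_2^\circ}$ is one $H$-orbit.

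For the injection statement I would argue by contradiction: suppose $\alpha\neq\alpha'$ in $(\Sigma^\circ)^a\setminus\Sigma^a$ produce the same color $D\in\Dm^{2a}$. Then $\varphi^*(D)$ admits the two expressions $D_1^\circ+D_2^\circ=(D_1')^\circ+(D_2')^\circ$ as effective divisors, forcing $\Dm^\circ(\alpha)=\Dm^\circ(\alpha')$. Choosing any $D^\circ$ in this common pair and applying Proposition~\ref{prop:type-a-to-type-2a} separately with $\alpha$ and with $\alpha'$ yields
\[
\tfrac{1}{2}\check{\alpha}|_{\Mm^\circ}=\rho^\circ(D^\circ)=\tfrac{1}{2}\check{\alpha'}|_{\Mm^\circ},
\]
so evaluating at $\alpha\in\Sigma^\circ\subseteq\Mm^\circ$ produces $\langle\check{\alpha'},\alpha\rangle=\langle\check{\alpha},\alpha\rangle=2$, contradicting the non-positivity of the pairing of a simple coroot with a distinct simple root.

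The main obstacle is this injectivity step: the first assertion follows smoothly from the set-theoretic fiber description, but ruling out the possibility that distinct simple roots $\alpha\neq\alpha'$ with $2\alpha,2\alpha'\in\Sigma$ share the same pullback pair $\set{D_1^\circ,D_2^\circ}$ really requires the sharp value $\rho^\circ(D^\circ)=\tfrac{1}{2}\check{\alpha}|_{\Mm^\circ}$ provided by Proposition~\ref{prop:type-a-to-type-2a}; without it the mere orbit data on $\Dm^\circ$ would not distinguish the two candidate $\alpha$'s.
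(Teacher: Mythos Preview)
Your proof is correct and follows essentially the same approach as the paper: both rely on the identification of $\phi$ with the quotient map for the $H$-action on $\Dm^\circ$ together with Proposition~\ref{prop:type-a-to-type-2a}, though the paper is terser, citing only $G$-equivariance (to get $\phi\colon\Dm^\circ(\alpha)\twoheadrightarrow\Dm(\alpha)$) and Proposition~\ref{prop:type-a-to-type-2a}. Your injectivity argument via the $\rho^\circ$-formula is a valid elaboration; a slightly shorter route is to observe that a color $D\in\Dm^{2a}$ always has $\varsigma(D)=\{\alpha\}$ a singleton, so $D$ already determines $\alpha$ and hence $\Dm^\circ(\alpha)$.
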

\begin{proof}
  As $\varphi$ is $G$-equivariant, $\phi$ induces well-defined maps from $\Dm^\circ(\alpha)$ onto $\Dm(\alpha)$ for every $\alpha \in S \setminus S^p$. The statement follows by Proposition \ref{prop:type-a-to-type-2a}.
\end{proof}

Replacing $H$ with a suitable conjugate, we may assume that $BH$ is open in $G$, i.e., $eH$ is in the open $B$-orbit in $G/H$. Recall that we can assume $G = G^{ss} \times C$ where $G^{ss}$ is semi-simple simply connected and $C$ is a torus in which case $\C[G]$ is factorial (see \cite[Corollary after Proposition 1]{Popov:Pic}). In particular, the pullback of any color $D$ of $G/H$ under the natural projection map $\pi \colon G \to G/H$ is given by an equations $f_D \in \C[G]$ which is unique up to multiplication by an element in $\C[G]^\times$. As the units in $\C[G]$ coincide with scalar multiples of characters of $G$ (see \cite[Proposition 1.2]{KKV:Picard}), there are unique equations $f_D$ with $f_D|_C = 1$. In other words, $f_D$ is invariant under the $C$-action from left and $f_D(e) = 1$. Let $B\times H$ act on $G$ by left and right translations, so that we get an action of  $B\times H$ on $\C[G]$. The equations $f_D$ are $( B \times H )$-semi-invariant. Recall the identification $\X(G) = \X(C)$ and let us write $f_\chi$ if we consider $\chi \in \X(C)$ as an invertible function on $G$.

The following statement generalizes \cite[Lemme 6.2.2]{Luna:TypeA} which  Luna verifies for spherically closed spherical subgroups.

\begin{lemma}
  \label{lem:b-times-h-ev}
  Every $0 \neq f \in \C( G )^{(B \times H )}$ can be uniquely written as
  \[
    f = c f_\chi \prod_{D \in \Dm} f_D^{n_D}
  \]
  where $c \in \C^\times$, $\chi \in \X(C)$ and $(n_D)_{D \in \Dm} \in \Z^\Dm$.
\end{lemma}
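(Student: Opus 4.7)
The plan is to reduce Lemma~\ref{lem:b-times-h-ev} to a divisor calculation on $G$ and then invoke the description of units in $\C[G]$ recalled just above the statement. Since $\C[G]$ is factorial, the principal Weil divisor $\Div f$ expands uniquely as a $\Z$-linear combination of prime divisors, and $(B \times H)$-semi-invariance of $f$ forces $\Div f$ to be invariant under the left action of $B$ and the right action of $H$ (the semi-invariance scales $f$ by a character, which does not affect its divisor). The problem thus reduces to identifying the $(B \times H)$-stable prime divisors of $G$.

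The key step is the bijection between colors $D \in \Dm$ and $(B \times H)$-stable prime divisors of $G$, sending $D \mapsto \pi^{-1}(D)$, where $\pi \with G \to G/H$ denotes the natural projection. Any prime divisor of $G$ stable under right translation by $H$ is a union of $H$-orbits; since these are precisely the fibres of $\pi$, such a divisor equals $\pi^{-1}(E)$ for a prime divisor $E$ of $G/H$, and additionally requiring left $B$-stability forces $E$ to be $B$-invariant, i.e., $E$ is a color. Conversely, since $\pi$ is smooth (in fact \'etale-locally trivial, as used in Section~\ref{sec:pullback-col-2a}), $\pi^{-1}(D)$ is a prime divisor of $G$ for every $D \in \Dm$, and it is manifestly $(B \times H)$-stable; by the chosen normalisation $f_D|_C = 1$, it coincides with $\Div f_D$.

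Consequently, there are unique integers $(n_D)_{D \in \Dm}$ with $\Div f = \sum_{D \in \Dm} n_D \Div f_D$, so $g \coloneqq f \prod_{D \in \Dm} f_D^{-n_D}$ has zero divisor and therefore lies in $\C[G]^\times$. By \cite[Proposition~1.2]{KKV:Picard} together with the identification $\X(G) = \X(C)$, this means $g = c f_\chi$ for some $c \in \C^\times$ and $\chi \in \X(C)$, which yields the stated decomposition. For uniqueness, comparing divisors in any two such expressions for $f$ immediately forces the integers $n_D$ to agree; cancelling these factors leaves $c f_\chi = c' f_{\chi'}$, and evaluating at $e \in G$ (using $f_\chi(e) = \chi(e) = 1$) gives $c = c'$ and then $\chi = \chi'$. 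The main thing to get right is the classification of $(B \times H)$-stable primes in the second paragraph; everything else is a direct consequence of factoriality and the known structure of $\C[G]^\times$.
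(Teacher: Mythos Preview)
Your argument has a gap in the second paragraph: the asserted bijection between colors $D \in \Dm$ and $(B \times H)$-stable prime divisors of $G$ via $D \mapsto \pi^{-1}(D)$ fails when $H$ is disconnected. Smoothness (or \'etale-local triviality) of $\pi$ only guarantees that $\pi^{-1}(D)$ is a \emph{reduced} divisor, not that it is \emph{irreducible}; irreducibility of the preimage of an irreducible subvariety under a smooth surjection requires connected fibres, and the fibres of $\pi \colon G \to G/H$ are copies of $H$. When $H$ is disconnected, $\pi^{-1}(D)$ can split into several irreducible components, namely the preimages in $G$ of the colors $D^\circ \in \Dm^\circ$ of $G/H^\circ$ lying over $D$; these components form a single $H$-orbit but are individually only $(B \times H^\circ)$-stable, not $(B \times H)$-stable (see Lemma~\ref{lem:H-orbit-a-col} and the identity $f_D = f_{D_1^\circ} f_{D_2^\circ}$ for $\alpha \in (\Sigma^\circ)^a \setminus \Sigma^a$). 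Correspondingly, in the first paragraph the passage from ``$\Div f$ is $(B \times H)$-invariant'' to ``each prime component of $\Div f$ is $(B \times H)$-stable'' is unjustified: invariance of the divisor only means that $B \times H$ permutes its prime components preserving multiplicities, and it is connectedness of $B \times H^\circ$, not of $B \times H$, that fixes each component individually.

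This is precisely why the paper's proof detours through $H^\circ$: one first factorises $f$ using the irreducible equations $f_{D^\circ}$ for $D^\circ \in \Dm^\circ$ (legitimate because $B \times H^\circ$ is connected, so the preimages in $G$ really are prime), and then invokes the $H$-semi-invariance of $f$ together with Lemma~\ref{lem:H-orbit-a-col} to conclude that the exponents are constant along each $H$-orbit in $\Dm^\circ$, whence the product collapses to one indexed by $\Dm$. Your divisor approach can be repaired along the same lines --- replace ``$(B \times H)$-stable prime divisors'' by ``$(B \times H)$-orbit sums of prime divisors in $G \setminus BH$'', which do biject with $\Dm$ and have equations $f_D$ --- but as written the key step is incorrect.
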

\begin{proof}
  Let $0 \neq f \in \C(G)^{(B \times H)}$. If $f(1) = 0$, then $f(BH) = 0$, so that $f = 0$. Hence $f(1) \neq 0$. The restriction $f|_C$ is a $C$-semi-invariant invertible regular function on $C$, hence $f|_C = c \chi$ for some $c \in \C^\times$ and $\chi \in \X(C)$ (see \cite[Proposition 1.2]{KKV:Picard}). It follows that $f = c f_\chi g$ for some $g \in \C(G)^{(B \times H)}$ with $g|_C = 1$. As $\C[G]$ is factorial, every $(B \times H^\circ)$-semi-invariant rational function on $G$ is a a quotient of two semi-invariant regular functions (see \cite[Theorem II.3.3]{PS:AlgGeoiV}) which in turn can be written as a product of irreducible semi-invariant regular functions ($B \times H^\circ$ is connected).

  By \cite[Proposition 3]{Serre:Fibres}, the projection $G \to G/H^\circ$ is locally trivial in \'{e}tale topology, and thus preserves reducedness. Together with the connectedness of $B \times H^\circ$, it follows that the pullback of colors $\Dm^\circ$ under the natural projection map $G \to G/H^\circ$ induces a bijection between $\Dm^\circ$ and the irreducible components of $G \setminus BH = G \setminus BH^\circ$.

  As explained above, choose local equations for the irreducible components of $G \setminus BH$, i.e., every $D^\circ \in \Dm^\circ$ yields an irreducible equation $f_{D^\circ} \in \C[G]$ with $f_{D^\circ}|_C = 1$. Then every irreducible $(B \times H^\circ)$-semi-invariant regular function $g'$ on $G$ with $g'|_C = 1$ coincides with some $f_{D^\circ}$. We obtain
  \[
    f = c f_\chi \prod_{D^\circ \in \Dm^\circ} f_{D^\circ}^{n_{D^\circ}}
  \]
  for $c \in \C^\times$, $\chi \in \X(C)$ and $(n_{D^\circ})_{D^\circ \in \Dm^\circ} \in \Z^{\Dm^\circ}$.

  As the composition of the projections $G \to G / H^\circ \to G / H$ coincides with $G \to G / H$, it follows by Proposition \ref{prop:pullback-col-fq} that we have $f_D = f_{\varphi^*(D)}$ for $D \in \Dm$ unless $D \in \Dm(\alpha)$ for $\alpha \in ( \Sigma^\circ )^a \setminus \Sigma^a$ in which case we have $f_D = f_{D_1^\circ} \cdot f_{D_2^\circ}$ where $\Dm^\circ (\alpha) = \set{ D^\circ_1, D^\circ_2 }$. As $f$ is $H$-semi-invariant, it follows by Lemma \ref{lem:H-orbit-a-col} that $n_{D^\circ_1} = n_{D^\circ_2}$.
\end{proof}

\begin{cor}
  \label{cor:pullback-b-ef}
  If $\chi \in \Mm$ and $f \in \C(G/H)^{(B)}$ of weight $\chi$, then
  \[
    \pi^*f = c f_{-\chi|_C} \prod_{D \in \Dm} f_D^{\langle \rho(D), \chi \rangle}
  \]
  for some $c \in \C^\times$. In particular $\chi = \chi|_C + \sum_{D \in \Dm} \langle \rho(D), \chi \rangle \chi_D$, where $\chi_D$ is the $B$-weight of $f_D$.
\end{cor}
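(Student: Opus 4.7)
The plan is to apply Lemma~\ref{lem:b-times-h-ev} to the pullback $\pi^{*}f$. Since $f$ is $B$-semi-invariant of weight $\chi$ on $G/H$, the function $\pi^{*}f$ is $(B \times H)$-semi-invariant on $G$: it has left $B$-weight $\chi$ and is right $H$-invariant because it descends to $G/H$. Hence Lemma~\ref{lem:b-times-h-ev} yields
\[
  \pi^{*}f = c\,f_{\chi'}\prod_{D \in \Dm} f_D^{n_D}
\]
for uniquely determined $c \in \C^\times$, $\chi' \in \X(C)$, and $(n_D)_{D\in\Dm} \in \Z^{\Dm}$, and the task reduces to identifying $\chi'$ and the exponents $n_D$.

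To determine the $n_D$ I would pass to divisors on $G$. The homogeneous space $G/H$ carries no $G$-invariant prime divisors, so $B$-semi-invariance of $f$ gives $\Div(f) = \sum_{D \in \Dm}\langle \rho(D),\chi\rangle D$ on $G/H$ by the very definition of $\rho$. The reducedness and étale-local triviality arguments already used in the proof of Lemma~\ref{lem:b-times-h-ev} give $\Div(f_D) = \pi^{*}D$ as Cartier divisors on $G$. Pulling back and comparing with the expression $\Div(\pi^{*}f) = \sum_D n_D \Div(f_D)$ read off from the factorization (using that $c$ and $f_{\chi'}$ are units on $G$), and noting that distinct colors have disjoint preimages in $G$ so that the supports of the various $\Div(f_D)$ are disjoint, matching coefficients forces $n_D = \langle \rho(D),\chi\rangle$.

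To determine $\chi'$ I would restrict both sides of the factorization to $C$. Since $f_D|_C = 1$ by construction and $f_{\chi'}|_C = \chi'$, this gives $(\pi^{*}f)|_C = c\,\chi'$ as a function on $C$. On the other hand $C \subseteq B$ acts on $\pi^{*}f$ via the character $\chi|_C$ under left translation, and a $C$-semi-invariant function on $C$ of that weight is forced to be proportional to the inverse character; hence $\chi' = -\chi|_C$. The \enquote{in particular} assertion then follows by equating total $B$-weights on the two sides of the resulting identity, using that $f_{-\chi|_C}$ has $B$-weight $\chi|_C$ (since $f_\eta$ has $B$-weight $-\eta$) while each $f_D$ has $B$-weight $\chi_D$ by definition.

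The only genuinely delicate input is the identity $\Div(f_D) = \pi^{*}D$ with the correct multiplicities; this is already justified in the proof of Lemma~\ref{lem:b-times-h-ev} via the étale-local triviality of $\pi$ (through $G \to G/H^\circ$ followed by a finite étale cover) together with the accounting in the type-$2a$ case where $f_D$ becomes a product of two factors $f_{D^\circ_1} f_{D^\circ_2}$. Once this is in hand, the corollary is a bookkeeping exercise on the canonical factorization provided by Lemma~\ref{lem:b-times-h-ev}.
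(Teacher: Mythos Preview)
Your proof is correct and follows essentially the same approach as the paper: apply Lemma~\ref{lem:b-times-h-ev} to $\pi^*f$, identify the exponents $n_D$ by a valuation/divisor comparison using the \'etale-local triviality of $\pi$, and identify the torus character by restricting to $C$. The paper orders the two identifications the other way around and phrases the divisor step as $\nu_{D^\circ}(\pi^*f)=\nu_{\pi(D^\circ)}(f)$ rather than $\Div(f_D)=\pi^*D$, but these are the same computation.
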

\begin{proof}
  We continue to use the notation from the proof of Lemma \ref{lem:b-times-h-ev}.
  
  As $\pi \colon G \to G/H$ is $G$-equivariant, $\pi^* f$ is in $( \C(G)^H )^{(B)}$, and thus $(B \times H)$-semi-invariant. By Lemma \ref{lem:b-times-h-ev}, there exist unique $c \in \C^\times$, $\gamma \in \X(C)$ and $(n_D)_{D \in \Dm} \in \Z^\Dm$ such that
  \[
    \pi^*f = c f_\gamma \prod_{D \in \Dm} f_D^{n_D} \text{.}
  \]
  As the $B$-weights of $\pi^*f$ and $f$ coincide, $(\pi^*f)|_C$ is a scalar multiple of $f_{-\chi|_C}|_C$, and thus $\gamma = -\chi|_C$.

  By \cite[Propoosition 3]{Serre:Fibres}, the natural projection map $\pi \colon G \to G/H$ is a locally trivial fibration in \'{e}tale topology. Hence $\nu_{D^\circ}( \pi^*f ) = \nu_{\pi(D^\circ)}(f)$ for every $D^\circ \in \Dm^\circ$, where $D^\circ$ is identified with its corresponding irreducible component in $G \setminus BH$ and $D \coloneqq \pi(D^\circ) \in \Dm$ (see Section \ref{sec:interm-finite-quot}). We have $\nu_D(f) = \langle \rho(D), \chi \rangle$ and $\nu_{D^\circ}( c f_\gamma \prod_{D' \in \Dm} f_{D'}^{n_{D'}} ) = \nu_{D^\circ}( f_D^{n_D} ) = n_D$ (see the last paragraph of the proof of Lemma \ref{lem:b-times-h-ev}). The statement follows.
\end{proof}

\subsection{A Preliminary Correspondence}
\label{sec:prel-corr}

In this section, let $\Ht \subseteq G$ be a spherical subgroup with Luna datum $\Smt = ( \Mmt, \Sigmat, \St^p, \Dmt^a )$. By Lemma \ref{lem:b-times-h-ev}, we have an isomorphism of abelian groups
\[
  \theta \colon \X( C ) \times \Z^\Dmt \to \C(G)^{( B \times \Ht )} / \C^\times; \rleft( \chi, \rleft( n_\Dt \rright)_{\Dt \in \Dmt} \rright) \mapsto f_\chi \prod_{\Dt \in \Dmt} f_\Dt^{ n_\Dt } \text{,}
\]
which gives rise to the following homomorphism of abelian groups:
\[
  \tau \colon \X( C ) \times \Z^\Dmt \to \X( \Ht ); \; x \mapsto \Ht \text{-weight of} \; \theta(x) \text{.}
\]

\begin{rem}
  \label{rem:ker-tau}
  If $\varthetat \colon \Mmt \to \X(C) \times \Z^\Dmt; \chi \mapsto ( -\chi|_C, ( \langle \rhot( \Dt ), \chi \rangle )_{\Dt \in \Dmt} )$, then, by Lemma \ref{lem:b-times-h-ev}, $\widetilde{\Phi} \coloneqq \ker(\tau) = \im(\varthetat)$. Furthermore $\varthetat$ is injective (see \cite[Section 6.3]{Luna:TypeA}).
\end{rem}

Define $\Ht^* \coloneqq \bigcap_{\chi \in \X( \Ht )} \ker( \chi )$ which is the smallest closed normal subgroup of $\Ht$ such that $\Ht/\Ht^*$ is diagonalizable, and we have $\X( \Ht ) = \X( \Ht / \Ht^* )$ (see, for instance, \cite[Exercise 16.12]{Humphreys:LAG}). For every intermediate subgroup $H$ between $\Ht^*$ and $\Ht$, the character group $\X( \Ht / H )$ can be identified with the subgroup of characters in $\X( \Ht )$ whose kernel contain $H$.

Luna proves the following statement (cf.~\cite[Lemme 6.3.1]{Luna:TypeA}) for spherically closed spherical subgroups $\Ht$ which we generalize to arbitrary spherical subgroups.
\begin{lemma}
  \label{lem:intermediate-grps}
  The map $H \mapsto \Phi \coloneqq \tau^{-1}( \X( \Ht / H ) )$ is an order-reversing bijection between the set of subgroups $H \subseteq \Ht$ containing $\Ht^*$ and the set of subgroups $\Phi \subseteq \X(C) \times \Z^\Dmt$ containing $\widetilde{\Phi}$.
\end{lemma}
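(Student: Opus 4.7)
The plan is to factor the stated correspondence through two standard bijections. By construction $\Ht/\Ht^*$ is diagonalizable with character group $\X(\Ht/\Ht^*) = \X(\Ht)$, so the duality for diagonalizable groups (see, e.g., \cite[Exercise 16.12]{Humphreys:LAG}) provides an order-reversing bijection
\[
  \{H \text{ with } \Ht^* \subseteq H \subseteq \Ht\} \;\longleftrightarrow\; \{\text{subgroups } \Psi \subseteq \X(\Ht)\}, \quad H \longmapsto \X(\Ht/H),
\]
with inverse $\Psi \mapsto \bigcap_{\chi \in \Psi} \ker(\chi)$. Granting that $\tau$ is surjective, the correspondence theorem gives an order-preserving bijection $\Psi \mapsto \tau^{-1}(\Psi)$ between subgroups of $\X(\Ht)$ and subgroups of $\X(C) \times \Z^\Dmt$ containing $\Phit = \ker(\tau)$. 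Composing the two yields exactly $H \mapsto \tau^{-1}(\X(\Ht/H))$, and the composition is order-reversing.

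The crux of the proof is therefore to establish the surjectivity of $\tau$. Via the isomorphism $\theta$ of Lemma \ref{lem:b-times-h-ev}, this reduces to showing that every $\chi \in \X(\Ht)$ is the $\Ht$-weight of some nonzero $(B \times \Ht)$-semi-invariant rational function on $G$. Given such $\chi$, I would work with the $G$-linearized line bundle $L_\chi = G \times^\Ht \C_\chi$ on $G/\Ht$ (recall from Section \ref{sec:prep-obs} that under the reduction $G = G^{ss} \times C$ every line bundle on $G/\Ht$ is $G$-linearizable). A $B$-semi-invariant rational section $s$ of $L_\chi$ pulls back along $\pi \colon G \to G/\Ht$ to a rational function on $G$ that is $B$-semi-invariant on the left and $\Ht$-semi-invariant of weight $\chi$ on the right. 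Feeding this into Lemma \ref{lem:b-times-h-ev} writes it as $c f_{\chi'} \prod_{\Dt \in \Dmt} f_\Dt^{n_\Dt}$, and hence $\chi = \tau(\chi', (n_\Dt))$ lies in the image of $\tau$.

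The main obstacle is producing the $B$-semi-invariant rational section of $L_\chi$. For spherically closed $\Ht$, Luna handles this via explicit structural features of the spherical closure; in the general case one must argue that the class of any $G$-linearized line bundle on the spherical homogeneous variety $G/\Ht$ is represented by a $\Z$-linear combination of colors, i.e.\ of $B$-stable prime divisors. The canonical section of such a representative divisor, being supported on $B$-stable subvarieties, is automatically $B$-semi-invariant. This step is the essential novelty beyond Luna's argument; once it is in place, the rest of the proof proceeds by the formal composition outlined above, and order-reversal is inherited from the diagonalizable duality.
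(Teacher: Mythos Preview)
Your factorization through the duality for diagonalizable groups and the correspondence theorem for the surjection $\tau$ is correct and is exactly the structure of Luna's argument, which the paper simply invokes. So the approach matches.

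The one point worth flagging is your diagnosis of where the ``essential novelty beyond Luna's argument'' lies. You locate it in producing a $B$-semi-invariant rational section of $L_\chi$ for arbitrary (not spherically closed) $\Ht$. But that step does not actually depend on spherical closure: once $G = G^{ss} \times C$ with $G^{ss}$ simply connected, it is standard that every line bundle on a spherical homogeneous space $G/\Ht$ is represented by an integral combination of colors (see e.g.\ Brion's work on Picard groups of spherical varieties, or argue directly from factoriality of $\C[G]$), so the desired section exists for \emph{any} spherical $\Ht$. The paper accordingly places the novelty entirely in Lemma~\ref{lem:b-times-h-ev}, which extends Luna's Lemme~6.2.2 --- the unique decomposition of $(B \times \Ht)$-semi-invariant rational functions --- from the spherically closed case to arbitrary $\Ht$. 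That lemma is what makes $\theta$ an isomorphism in general; once it is available, Luna's proof of his Lemme~6.3.1 (including the surjectivity of $\tau$) goes through verbatim, as the paper states. So your argument is sound, but the passage you identified as the hard step is not the one that required new work.
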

\begin{proof}
  Follows from the same arguments as in the proof of \cite[Lemme 6.3.1]{Luna:TypeA} with the exception that instead of \cite[Lemme 6.2.2]{Luna:TypeA}, we need to apply Lemma \ref{lem:b-times-h-ev}.
\end{proof}

\subsection{Proof of Theorem \ref{thm:bijection-finite-quot}}
\label{sec:proof-prop-bfq}

We continue to use the notation and assumptions from Section \ref{sec:outlook}.

By replacing $H$ with a suitable conjugate, we may assume that $BH$ is open in $G$. Set $N \coloneqq N_G(H)$ which is a spherical subgroup with Luna datum $N_G(\Sm) = ( \Mm^N, \Sigma^N, S^p, \Dm^{a,N})$ (see Theorem \ref{thm:hsd-normalizer}). By \cite[Proposition 5.1 and Corollaire 5.2]{BP:Val}, $BN = BH$, and thus $BN \subseteq G$ is open. Recall that we have unique equations $f_{D^N} \in \C[G]$ of the pullback of $D^N \in \Dm^N$ under $G \to G/N$ and similarly $f_D \in \C[G]$ for the pullback of $D \in \Dm$ under $G \to G/H$ (see Section \ref{sec:b-h-semi-inv-func}). The action of $N$ on $G/H$ by right translations induces an action of $N$ on $\Dm$, so that $\Dm^N$ can be identified with the $N$-orbits in $\Dm$. Let $\varphi \colon G/H \to G/N$ be the quotient map for the $N$-action on $G/H$. By Section \ref{sec:pullback-col}, the pullback of a color $D^N \in \Dm^N$ under $\varphi$ is a color of $G/H$ unless $D^N \in \Dm^N(\alpha)$ for $\alpha \in \Sigma^a \setminus ( \Sigma^N )^a$, in which case $\Dm(\alpha) = \set{ D_1, D_2 }$ and $\varphi^*(D^N) = D_1 + D_2$. In the first case, we have $f_{D^N} = f_D$ while in the latter $f_{D^N} = f_{D_1} f_{D_2}$. Recall from Theorem \ref{thm:hsd-normalizer} that $\alpha \in \Sigma^a \setminus ( \Sigma^N )^a$ if and only if $\rho(D) = \tfrac{1}{2} \check{\alpha}|_\Mm$ for every $D \in \Dm(\alpha)$.

We define $N^* \coloneqq \bigcap_{\chi \in \X(N)} \ker(\chi)$ and write $\tau^N \colon \X( C ) \times \Z^\Dmt \to \X( N )$ for the map from Section \ref{sec:prel-corr}. As $N/H$ is diagonalizable (see \cite[Theorem 6.1]{Knop:LV} ), $H$ contains $N^*$ and thus, by Lemma \ref{lem:intermediate-grps}, corresponds to a sublattice $\Phi \subseteq \X(C) \times \Z^{\Dm^N}$ containing $\Phi^N \coloneqq \ker(\tau^N)$.

Let $\Ht$ be a spherical subgroup corresponding to $\Smt$ and let $\Dmt$ its full set of colors. By Theorem \ref{thm:hsd-normalizer}, $N_G(\Smt) = N_G(\Sm)$, so that, after replacing $\Ht$ with a suitable conjugate, we may assume $N = N_G(\Ht)$. Pullback of divisors under the natural maps $G/H \to G/N$ resp.~$G/\Ht \to G/N$ induces maps $\iota \colon \Dm^N \to \Dm$ resp.~$\iotat \colon \Dm^N \to \Dmt$: By Section \ref{sec:pullback-col}, the pullback of a color is either a color or a sum of two colors, and in the latter case let any of the two summands be the image of $\iota$ resp.~$\iotat$. Although the maps $\iota$ resp.~$\iotat$ are not unique in general, their compositions with $\rho$ resp.~$\rhot$ are unique, so that we obtain well-defined maps:
\begin{align*}
  \vartheta \colon \Mm & \to \X(C) \times \Z^{\Dm^N}; \chi \mapsto (
  -\chi|_C, ( \langle \rho \circ \iota(D), \chi \rangle )_{D \in
    \Dm^N}) \text{,} \\
  \varthetat \colon \Mmt & \to \X(C) \times \Z^{\Dm^N}; \chi \mapsto
  (-\chi|_C, ( \langle \rhot \circ \iotat(D), \chi \rangle )_{D \in
    \Dm^N}) \text{.}
\end{align*}

By Corollary \ref{cor:pullback-b-ef}, for $f \in \C(G/H)^{(B)}$ of weight $\chi \in \Mm$, we have that $\pi^* f$ and $\theta \circ \vartheta( \chi )$ coincide up to a nonzero scalar multiple where $\pi \colon G \to G/H$, and thus $\vartheta(\Mm) = \Phi$. Similarly $\Phit = \varthetat(\Mmt)$ corresponds to $\Ht$ under the bijection of Lemma \ref{lem:intermediate-grps}. As $\Mmt \subseteq \Mm$ and $\vartheta$ extends $\varthetat$, we obtain $\Phit \subseteq \Phi$, and thus $H \subseteq \Ht$. By Lemma \ref{lem:intermediate-grps}, $N / \Ht$ (resp.~$N / H$) is a quasitorus with character lattice $\Phit / \Phi^N$ (resp.~$\Phi / \Phi^N$), and thus $\dim \Ht - \dim H = \rk( \Phi / \Phit )$. As $\vartheta$ is injective (see Remark \ref{rem:ker-tau}), we obtain $\Phi \cong \Mm$ and $\Phit \cong \Mmt$, and hence $\dim \Ht - \dim H = \rk( \Mm / \Mmt ) = 0$. 

We have seen that the assignment $\Smt \mapsto \Ht$ induces a well-defined and injective map $\Ss^f \Sm \to \Hm^f_H$. Surjectivity follows by Section \ref{sec:pullback-col}.

\section{The Luna datum of the Identity Component}
\label{sec:hsd-id-comp}

In this section, let $H \subseteq G$ be a spherical subgroup with Luna datum $\Sm = (\Mm, \Sigma, S^p, \Dm^a)$. Let us write $\Dm$ for the full set of colors of $\Sm$. The identity component $H^\circ$ of $H$ is also spherical in $G$ and we write $\Sm^\circ = ( \Mm^\circ, \Sigma^\circ, S^p, (\Dm^\circ)^a )$ for its Luna datum (recall that the stabilizer of the open $B$-orbit stays the same, by Proposition \ref{prop:M-Sigma-Sp-fq}, as $H / H^\circ$ is finite). Let us determine $\Sm^\circ$ in terms of $\Sm$.

\begin{prop}
  \label{prop:sph-subgrp-below}
  For any Luna datum $\Smt = ( \Mmt, \Sigmat, \St^p, \Dmt^a )$ with $\Sm \in \Ss^{\mathsf{f}} \Smt$ there is a corresponding spherical subgroup $\Ht \subseteq G$ with $\Ht \subseteq H$ and $H / \Ht$ is finite, and this assignment induces an order-reversing bijection
  \[
    \rleft\{\begin{varwidth}{5in}{spherical subgroups $\Ht \subseteq H
        \subseteq G$ \\ with finite quotient $H / \Ht$}
    \end{varwidth}\rright\}
    \leftrightarrow \rleft\{\begin{varwidth}{5in}{homogeneous
        spherical data \\ $\Smt$ with $\Sm \in \Ss^{\mathsf{f}}
        \Smt$}\end{varwidth}\rright\}\text{.}
  \]
\end{prop}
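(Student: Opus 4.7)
The plan is to recognize this proposition as the dual of Theorem \ref{thm:bijection-finite-quot}: that theorem described the spherical subgroups of $G$ containing $H$ with finite quotient, while here we describe those \emph{contained in} $H$ with finite quotient. The translation is immediate via the observation that $\widetilde{H}\subseteq H$ with $H/\widetilde{H}$ finite is the same as $H \in \Hm_{\widetilde{H}}^f$.

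For the forward direction, given $\widetilde{H} \subseteq H$ with $H/\widetilde{H}$ finite, let $\Smt$ be the Luna datum of $\widetilde{H}$. Since $H \in \Hm_{\widetilde{H}}^f$, Theorem \ref{thm:bijection-finite-quot} applied with $\widetilde{H}$ in the role of the base subgroup says that $H$ corresponds to a subdatum of $\Smt$ lying in $\Ss^{\mathsf{f}}\Smt$. By Proposition \ref{prop:clc}, this subdatum is precisely $\Sm$, so $\Sm \in \Ss^{\mathsf{f}}\Smt$. Conversely, given $\Smt$ with $\Sm \in \Ss^{\mathsf{f}}\Smt$, use Luna's classification (Proposition \ref{prop:clc}) to pick some spherical subgroup $H_0 \subseteq G$ with Luna datum $\Smt$. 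Theorem \ref{thm:bijection-finite-quot} applied to $H_0$ then produces a spherical subgroup $H' \in \Hm_{H_0}^f$ whose Luna datum is exactly the prescribed subdatum $\Sm$. Since $H$ and $H'$ share the Luna datum $\Sm$, they are conjugate, say $H = gH'g^{-1}$, and $\widetilde{H} \coloneqq gH_0 g^{-1} \subseteq H$ is the desired spherical subgroup with Luna datum $\Smt$ and $H/\widetilde{H}$ finite.

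The main subtlety is \emph{injectivity}, because the right-hand side classifies Luna data (conjugation-invariant), while the left-hand side classifies honest subgroups of the fixed group $H$. Suppose $\widetilde{H}_1, \widetilde{H}_2 \subseteq H$ both have Luna datum $\Smt$ and finite quotient in $H$. By Proposition \ref{prop:clc} they are $G$-conjugate: $\widetilde{H}_2 = g\widetilde{H}_1 g^{-1}$. Since $H/\widetilde{H}_i$ is finite we have $\widetilde{H}_1^\circ = H^\circ = \widetilde{H}_2^\circ$, hence $g \in N_G(H^\circ)$; but by Remark \ref{rem:normalizer-finite-incl} we have $N_G(H^\circ) = N_G(\widetilde{H}_1)$, so $\widetilde{H}_1 = \widetilde{H}_2$. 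This is where I expect the main care to be required, as the uniqueness in Luna's classification is only up to conjugation.

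Finally, to check that the bijection is order-reversing, let $\widetilde{H}_1 \subseteq \widetilde{H}_2 \subseteq H$ with all quotients finite, and let $\Smt_i$ be the Luna datum of $\widetilde{H}_i$. Since $\widetilde{H}_2/\widetilde{H}_1$ is finite, $\widetilde{H}_2 \in \Hm_{\widetilde{H}_1}^f$, so by Theorem \ref{thm:bijection-finite-quot} (applied at $\widetilde{H}_1$) the datum $\Smt_2$ is a subdatum of $\Smt_1$, i.e.\ $\Smt_2 \preceq \Smt_1$. The reverse implication follows analogously by invoking the bijective content of Theorem \ref{thm:bijection-finite-quot} together with the injectivity argument above to promote the containment of Luna data to an actual containment of subgroups inside $H$.
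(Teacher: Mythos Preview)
Your proof is correct and follows essentially the same approach as the paper: both invoke the existence part of Luna's classification (Proposition~\ref{prop:clc}) together with Theorem~\ref{thm:bijection-finite-quot} applied with $\widetilde{H}$ as the base subgroup, and both resolve the conjugation ambiguity via the normalizer argument (the paper cites Remark~\ref{rem:no-conj-finite}, while you spell out the same reasoning directly from Remark~\ref{rem:normalizer-finite-incl}). Your treatment of injectivity and of the order-reversing property is more explicit than the paper's terse proof, but the underlying ideas are identical.
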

\begin{proof}
  By the existence part of the Luna conjecture, there exists a spherical subgroup $\Ht \subseteq G$ with corresponding Luna datum $\Smt$. As $\Sm \in \Ss^{\mathsf{f}} \Smt$, it follows, by Theorem \ref{thm:bijection-finite-quot}, that (up to conjugation) $\Ht \subseteq H$ and $H/\Ht$ is finite. Recall from Remark \ref{rem:no-conj-finite} that $\Ht$ is uniquely determined. It follows that the arrow \enquote{$\leftarrow$} is well-defined and injective. By Theorem \ref{thm:bijection-finite-quot}, the arrow \enquote{$\leftarrow$} is also surjective.
\end{proof}

We construct a Luna datum $\Sm^\circ=(\Mm^\circ, \Sigma^\circ, S^p, ( \Dm^\circ )^a)$:
\[
  \Mm^\circ \coloneqq \set{ x \in \Mm_\Q \cap \X(B) \with \langle \rho( \Dm ), x \rangle \subseteq \Z } \text{.}
\]
Then $\Mm$ is a sublattice of $\Mm^\circ$ of finite index. We let $\Sigma^\circ$ be the set of primitive generators in $\Mm^\circ$ of the extremal rays of $\cone( \Sigma ) \subseteq \Mm^\circ_\Q = \Mm_\Q$.

We need the following observation on the spherical roots of $\Sm^\circ$ of type $a$.
\begin{prop}
  \label{prop:id-comp-sph-root-type-a}
  We have $\rleft( \Sigma^\circ \rright)^a = \Sigma^a \cup \rleft( \tfrac{1}{2} \Sigma^{2a} \cap \Mm^\circ \rright)$.
\end{prop}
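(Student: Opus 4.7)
By definition $(\Sigma^\circ)^a = \Sigma^\circ \cap S$, and $\Sigma^\circ$ consists of the primitive generators in $\Mm^\circ$ of the extremal rays of $\cone(\Sigma)$. Since $\Sigma$ is a linearly independent set of primitive ray generators of $\cone(\Sigma)$ in $\Mm$, each $\sigma^\circ \in \Sigma^\circ$ is a positive rational multiple of a unique $\sigma \in \Sigma$. The plan is to prove the two inclusions separately, relying essentially on Table~\ref{tab:sr}, axiom $(\operatorname{A1})$, and Proposition~\ref{prop:foschi}.

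For $\subseteq$: Pick $\sigma^\circ \in \Sigma^\circ \cap S$ with corresponding $\sigma \in \Sigma$ on the same ray, so $\sigma = c\sigma^\circ$ for some $c \in \Q_{>0}$. Because $\sigma^\circ \in S$, the support $\Supp(\sigma)$ is the singleton $\{\sigma^\circ\}$, and the only entries of Table~\ref{tab:sr} with single-simple-root support are the two $\mathsf{A}_1$ rows $\alpha_1$ and $2\alpha_1$, so $c \in \{1, 2\}$. If $c = 1$, then $\sigma^\circ = \sigma \in \Sigma \cap S = \Sigma^a$. If $c = 2$, then $\sigma = 2\sigma^\circ \in \Sigma^{2a}$, so $\sigma^\circ \in \tfrac{1}{2}\Sigma^{2a}$, and $\sigma^\circ \in \Mm^\circ$ by the very definition of $\Sigma^\circ$.

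For $\supseteq$: If $\alpha \in \Sigma^a$, axiom $(\operatorname{A1})$ gives $\langle \rho(D), \alpha\rangle = 1$ for either $D \in \Dm(\alpha) \subseteq \Dm^a$, so $\langle \rho(D), \alpha/k\rangle = 1/k \notin \Z$ for every $k \ge 2$; hence $\alpha/k \notin \Mm^\circ$, so $\alpha$ is already the primitive generator in $\Mm^\circ$ of its ray and lies in $(\Sigma^\circ)^a$. If $\alpha \in \tfrac{1}{2}\Sigma^{2a} \cap \Mm^\circ$, then $2\alpha \in \Sigma$ with $\alpha \in S$, so $\alpha$ lies on the extremal ray of $2\alpha$ and $\alpha \in \Mm^\circ$ by assumption; the only point left to verify is primitivity in $\Mm^\circ$. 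For this I would use the unique type $2a$ color $D \in \Dm(\alpha)$: substituting $\chi = 2\alpha$ in the identity $\chi = \chi|_C + \sum_{D'} \langle \rho(D'), \chi\rangle \lambda_{D'}$ of Corollary~\ref{cor:pullback-b-ef} and pairing with $\check{\alpha}$, the contributions $\alpha|_C$ and $\langle \check{\alpha}, \lambda_{D'}\rangle$ for $D' \notin \Dm(\alpha)$ vanish, while Proposition~\ref{prop:foschi} gives $\langle \check{\alpha}, \lambda_D\rangle = 2$, collapsing the identity to $4 = 2\langle \rho(D), 2\alpha\rangle$. Thus $\langle \rho(D), \alpha\rangle = 1$ and $\alpha/k \notin \Mm^\circ$ for $k \ge 2$, as before.

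The only substantive point is the pairing computation $\langle \rho(D), 2\alpha\rangle = 2$ for the type $2a$ color; organising the proof so that this follows directly from Proposition~\ref{prop:foschi} and Corollary~\ref{cor:pullback-b-ef} avoids any appeal to the (equivalent) standard identity $\rho(D) = \tfrac{1}{2}\check{\alpha}|_\Mm$. Everything else is a bookkeeping exercise comparing primitive generators of the fixed cone $\cone(\Sigma)$ with respect to the two nested lattices $\Mm \subseteq \Mm^\circ$, with Table~\ref{tab:sr} ensuring that the index can only be $1$ or $2$ on each ray generated by a simple root.
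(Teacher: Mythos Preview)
Your proof is correct and follows the same structure as the paper's: Table~\ref{tab:sr} for the inclusion $\subseteq$, and the pairing $\langle \rho(D), \alpha\rangle = 1$ with a suitable color $D$ moved by $\alpha$ to establish primitivity in $\Mm^\circ$ for $\supseteq$. The only cosmetic difference is that the paper invokes the standard identity $\rho(D) = \tfrac{1}{2}\check{\alpha}|_\Mm$ for the type~$2a$ color directly, whereas you re-derive the needed value $\langle \rho(D), 2\alpha\rangle = 2$ from Corollary~\ref{cor:pullback-b-ef} and Proposition~\ref{prop:foschi}.
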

\begin{proof}
  \enquote{$\subseteq$}: Let $\gamma = \alpha \in ( \Sigma^\circ )^a = \Sigma^\circ \cap S$. Then $k\alpha \in \Sigma$ for some positive integer $k \in \Z$. An inspection of Table \ref{tab:sr} reveals that $k=1$ or $k=2$.

  \enquote{$\supseteq$}: If $\gamma = \alpha \in \Sigma^a$. Then $\Dm( \alpha ) = \set{ D_\alpha^\pm }$ with $\langle \rho(D_\alpha^\pm), \alpha \rangle = 1$. In particular, $\alpha \in \Mm^\circ$ is primitive, so that $\alpha \in ( \Sigma^\circ )^a$. Next consider the case $\gamma = 2\alpha \in \Sigma^{2a} = \Sigma \cap 2S$ with $\alpha \in \Mm^\circ$. Let $D$ be the unique color in $\Dm(\alpha)$. Then $\langle \rho(D), \alpha \rangle = \langle \tfrac{1}{2} \check{\alpha}, \alpha \rangle = 1$, and thus $\alpha \in \Mm^\circ$ is primitive.
\end{proof}

Finally we define the abstract set $(\Dm^\circ)^a$ equipped with a map $\rho^\circ \colon (\Dm^\circ)^a \to \Nm^\circ \coloneqq \Hom( \Mm^\circ, \Z )$. By Proposition \ref{prop:id-comp-sph-root-type-a}, a spherical root of $\Sm^\circ$ of type $a$ can either come from spherical roots of $\Sm$ of type $a$ or from spherical roots of $\Sm$ of type $2a$. For every $\alpha \in \rleft( \tfrac{1}{2} \Sigma \rright) \cap S \cap \Sigma^\circ$, we introduce pairwise distinct elements $D_\alpha^{\pm}$ and set $\rho^\circ( D_\alpha^{\pm} ) \coloneqq \tfrac{1}{2} \check{\alpha}|_{\Mm^\circ}$. Moreover for $D \in \Dm^a$, we set $\rho^\circ(D) \coloneqq \rho(D)|_{\Mm^\circ}$ which is possible as $\Mm$ is a sublattice of $\Mm^\circ$ of finite index. Then $(\Dm^\circ)^a$ is the disjoint union of $\Dm^a$ and the elements $D_\alpha^\pm$ for $\alpha \in \rleft( \tfrac{1}{2} \Sigma \rright) \cap S \cap \Sigma^\circ$.

\begin{prop}
  $\Sm^\circ \coloneqq ( \Mm^\circ, \Sigma^\circ, S^p, ( \Dm^\circ )^a )$ is a Luna datum.
\end{prop}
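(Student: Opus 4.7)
The plan is to verify the six axioms (A1), (A2), (A3), ($\Sigma$1), ($\Sigma$2), (S) of Definition \ref{def:hsd} for the quadruple $\Sm^\circ$. As preparation I would note that $\Mm \subseteq \Mm^\circ$ is of finite index (the integrality condition defining $\Mm^\circ$ involves only finitely many $\rho(D)$), so $\Mm^\circ_\Q = \Mm_\Q$ and each $\gamma \in \Sigma^\circ$ has the form $\gamma_*/k$ for a unique $\gamma_* \in \Sigma$ and a unique positive integer $k$; inspection of Table \ref{tab:sr} shows $k \in \{1, 2\}$. The map $\rho^\circ$ is well-defined into $\Nm^\circ = \Hom(\Mm^\circ, \Z)$: for $D \in \Dm^a$ this is the very definition of $\Mm^\circ$, and for the new colors $D_\alpha^\pm$ it follows from the fact that the unique type-$2a$ color $D \in \Dm(\alpha)$ satisfies $\rho(D) = \tfrac{1}{2}\check{\alpha}|_\Mm$, hence $\tfrac{1}{2}\check{\alpha}|_{\Mm^\circ} \in \Nm^\circ$. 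Then (A3) is Proposition \ref{prop:id-comp-sph-root-type-a} combined with the construction of $(\Dm^\circ)^a$, and (A2) splits accordingly: for $\alpha \in \Sigma^a$ restrict (A2) for $\Sm$ from $\Mm$ to $\Mm^\circ$; for $\alpha \in (\tfrac{1}{2}\Sigma^{2a}) \cap \Mm^\circ$ sum the two defining values $\tfrac{1}{2}\check{\alpha}|_{\Mm^\circ}$.

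For (A1), writing $\gamma = \gamma_*/k$ as above, the pairing $\langle \rho^\circ(D), \gamma \rangle$ equals $\tfrac{1}{k}\langle \rho(D), \gamma_* \rangle$ for $D \in \Dm^a$, which is $\le 1$ by (A1) for $\Sm$, with equality forcing $k = 1$ and $(\gamma, D) = (\alpha, D'_\alpha)$ or $(\alpha, D''_\alpha)$ for $\alpha \in \Sigma^a$. For a new color $D = D_\alpha^\pm$ the pairing becomes $\tfrac{1}{2k}\langle \check{\alpha}, \gamma_* \rangle$; if $\gamma = \alpha$ this is $1$, and otherwise $\gamma_* \ne 2\alpha$ (different rays), so ($\Sigma$1) for $\Sm$ gives $\langle \check{\alpha}, \gamma_* \rangle \le 0$. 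For ($\Sigma$1), suppose $2\alpha \in \Sigma^\circ$ with $\alpha \in S$: primitivity of $2\alpha$ in $\Mm^\circ$ excludes $\alpha \in \Sigma$ (since then $\alpha \in \Mm \subseteq \Mm^\circ$ would give a smaller element of the same ray), and $4\alpha$ is never a spherical root of $G$ by Table \ref{tab:sr}, so $2\alpha \in \Sigma^{2a}$. Then $\rho(D) = \tfrac{1}{2}\check{\alpha}|_\Mm$ for the unique $D \in \Dm(\alpha)$ together with the defining condition of $\Mm^\circ$ yields $\langle \check{\alpha}, \Mm^\circ \rangle \subseteq 2\Z$, and the inequality $\langle \check{\alpha}, \Sigma^\circ \setminus \{2\alpha\}\rangle \le 0$ reduces via $\gamma \mapsto \gamma_*$ to the corresponding statement in $\Sm$.

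For ($\Sigma$2), Table \ref{tab:sr} shows that for orthogonal simple roots $\alpha, \beta$ the only spherical roots of $G$ with support $\{\alpha, \beta\}$ are $\alpha+\beta$ and $\tfrac{1}{2}(\alpha+\beta)$, so $\alpha + \beta \in \Sigma^\circ \cup 2\Sigma^\circ$ forces $\alpha+\beta \in \Sigma \cup 2\Sigma$; then ($\Sigma$2) for $\Sm$ gives $\check{\alpha}|_\Mm = \check{\beta}|_\Mm$, which extends to $\Mm^\circ$ since the equality holds over $\Mm_\Q = \Mm^\circ_\Q$. For (S), the first part $\langle \check{\alpha}, \Mm^\circ\rangle = 0$ for $\alpha \in S^p$ is immediate because vanishing on $\Mm$ extends to $\Mm_\Q$; for the compatibility with $\gamma \in \Sigma^\circ$, the inclusion $S^p \subseteq S^p(\gamma)$ follows from $S^p(\gamma) = S^p(\gamma_*)$, and $S^{pp}(\gamma) \subseteq S^p$ follows from $S^{pp}(\gamma) \subseteq S^{pp}(\gamma_*) \subseteq S^p$, a row-by-row inspection of Table \ref{tab:sr} (only the $\mathsf{B}_r$ entries with $r \ge 2$ give $S^{pp}(\gamma) \subsetneq S^{pp}(2\gamma)$, and even then $\{\alpha_2, \ldots, \alpha_{r-1}\} \subseteq \{\alpha_2, \ldots, \alpha_r\}$). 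The main obstacle is the careful case analysis in ($\Sigma$1) and (S), which combines the structural identity $\rho(D) = \tfrac{1}{2}\check{\alpha}|_\Mm$ for type-$2a$ colors with an explicit reading of Table \ref{tab:sr} to exclude spurious spherical roots such as $4\alpha$ and $2(\alpha+\beta)$ and to establish $S^{pp}(\gamma) \subseteq S^{pp}(2\gamma)$; the remaining axioms are routine translations using $\Mm^\circ_\Q = \Mm_\Q$ and $\cone(\Sigma^\circ) = \cone(\Sigma)$.
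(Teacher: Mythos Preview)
Your proof is correct and follows essentially the same approach as the paper's: both verify the axioms of Definition~\ref{def:hsd} directly, reducing each to the corresponding axiom for $\Sm$ together with a case-by-case reading of Table~\ref{tab:sr}. The only notable difference is organizational---you front-load the observation $k\in\{1,2\}$ (which in the paper is derived during the verification of axiom~(S) via the existence of a color $D$ with $\langle\rho(D),\gamma_*\rangle\in\{1,2\}$, not by a pure table inspection), and you spell out ($\Sigma2$) and the $S^{pp}(\gamma)\subseteq S^{pp}(\gamma_*)$ step explicitly where the paper simply declares them straightforward.
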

\begin{proof}
  To show that $( \Mmt, \Sigma^\circ, S^p, ( \Dm^\circ )^a )$ is a Luna datum we verify the axioms of Definition \ref{def:hsd}. The axioms ($\operatorname{A2}$), ($\operatorname{A3}$), and ($\operatorname{\Sigma2}$) straightforwardly follow. We check the remaining axioms.

  We start with axiom ($\operatorname{S}$). The first part of it is straightforward. Let $\gamma \in \Sigma^\circ$ such that $k \gamma \in \Sigma$ for some integer $k \in \Z_{\ge1}$. We have to show that $(S^p, \gamma)$ is compatible. If $\gamma \in \Sigma$, then there is nothing to show. So let us assume $k>1$. An inspection of Table \ref{tab:sr} yields that any spherical root $\gamma'$ comes with a color $D \in \Dm$ such that $\langle \rho(D), \gamma' \rangle \in \set{ 1, 2 }$. Hence $\langle \rho(D), k\gamma \rangle = 2$ and $k = 2$. Another inspection of Table \ref{tab:sr} yields that $(S^p, \gamma)$ is compatible.

  Next we verify axiom ($\operatorname{A1}$). Let $D^\circ \in ( \Dm^\circ )^a$ and $\gamma \in \Sigma^\circ$.  We distinguish two cases.
  \begin{description}
  \item[$D^\circ \in \Dm^a$] If $k\gamma \in \Sigma$ for some integer $k>1$, then $\langle \rho(D^\circ), k\gamma \rangle \neq 1$, and thus $\langle \rho^\circ(D^\circ), \gamma \rangle \le 0$. If $\gamma \in \Sigma$, then $\langle \rho^\circ(D^\circ), \gamma \rangle = \langle \rho(D^\circ), \gamma \rangle \le 1$. We have $\langle \rho^\circ(D^\circ), \gamma \rangle = 1$ if and only if $\gamma = \alpha \in \Sigma \cap S \subseteq \Sigma^\circ \cap S$ and $D^\circ \in \Dm( \alpha ) = \Dm^\circ( \alpha )$.
  \item[$D^\circ \in (\Dm^\circ)^a \setminus \Dm^a$] There exists $\alpha \in \rleft( \tfrac{1}{2} \Sigma \rright) \cap S \cap \Sigma^\circ$ such that $D^\circ \in \{ D_\alpha^ \pm \}$ and $\rho^\circ(D^\circ) = \tfrac{1}{2} \check{\alpha}|_{\Mm^\circ}$. By axiom ($\operatorname{\Sigma1}$) for the Luna datum $\Sm$, we have $\langle \tfrac{1}{2} \check{\alpha}, \Sigma \setminus\set{ 2\alpha } \rangle \le 0$. Hence for every spherical root $\gamma \in \Sigma^\circ \setminus \set{ \alpha }$, we have $\langle \rho^\circ(D^\circ), \gamma \rangle \le 0$. Moreover $\langle \rho^\circ(D^\circ), \alpha \rangle = \langle \rho^\circ(D_\alpha^\pm), \alpha \rangle = 1$.
  \end{description}

  Finally we check axiom $(\Sigma 1)$. Let $\alpha \in \tfrac{1}{2}\Sigma^\circ \cap S$. As $\tfrac{1}{2}\Sigma^\circ \cap S \subseteq \tfrac{1}{2}\Sigma \cap S$, we have $\langle \check{\alpha}, \Sigma \setminus \set{ 2 \alpha } \rangle \le 0$ from which it straightforwardly follows that $\langle \check{\alpha}, \Sigma^\circ \setminus \set{ 2 \alpha } \rangle \le 0$. Let $D \in \Dm$ be the unique color moved by $\alpha$. By construction, $\langle \tfrac{1}{2} \check{\alpha}, \Mm^\circ \rangle = \langle \rho(D), \Mm^\circ \rangle \subseteq \Z$.
\end{proof}

\begin{cor}[of Proposition \ref{prop:sph-subgrp-below}]
  \label{cor:hsd-id-comp}
  The Luna datum of $H^\circ$ is given by $\Sm^\circ$.
\end{cor}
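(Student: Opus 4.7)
My plan is to deduce the corollary from Proposition \ref{prop:sph-subgrp-below}: under its order-reversing bijection between subgroups $\Ht \subseteq H$ with $H/\Ht$ finite and Luna data $\Smt$ with $\Sm \in \Ss^{\mathsf{f}}\Smt$, the identity component $H^\circ$ (the smallest such subgroup) must correspond to the largest such Luna datum. It therefore suffices to show that the constructed $\Sm^\circ$ both belongs to this family and dominates every other member; the bijection then identifies it as the Luna datum of $H^\circ$.

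For membership, i.e.\ $\Sm \in \Ss^{\mathsf{f}} \Sm^\circ$, I would take $(\Mm, \emptyset)$ as the candidate distinguished pair in $\Sm^\circ$. Since $\Mm \subseteq \Mm^\circ$ is of finite index, $\Mm_\Q = \Mm^\circ_\Q$, so $\Mm^\perp = 0$ in $\Nm^\circ_\Q$ and the colored-subspace condition is automatic. The reconstruction of $\Sigma$, $S^p$, and $\Dm^a$ from $\Sm^\circ$ via Definition \ref{def:subdatum} is immediate from the construction of $\Sm^\circ$ together with Proposition \ref{prop:id-comp-sph-root-type-a} (which, via $\varsigma^\circ(D_\alpha^\pm) = \set{\alpha}$ with $\alpha \notin \Sigma$, ensures the newly introduced colors $D_\alpha^\pm$ are correctly excluded from the reconstructed $\Dm^a$). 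The half-sum clause of Definition \ref{defi:dist-pair} demands that each $\gamma \in \Sigma \setminus \Sigma^\circ$ take the form $2\gamma^\circ$ with $\gamma^\circ \in (\Sigma^\circ)^+ \cup (\Sigma^\circ \setminus \X(R))$; since $\cone(\Sigma^\circ) = \cone(\Sigma)$, the ratio $\gamma/\gamma^\circ$ is a positive integer, and a case-by-case inspection of Table \ref{tab:sr} together with Definition \ref{def:distinguished-sr} forces this ratio to equal $2$ with $\gamma^\circ$ either distinguished in $\Sm^\circ$ (when $\gamma \in \X(R)$) or outside the root lattice otherwise.

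For maximality, let $\Smt$ be any member of the family, write $\psi \colon G/\Ht \to G/H$ for the associated natural morphism, and fix $\chi \in \Mmt$. Since $\Mmt \subseteq \X(B)$ and $\Mm \subseteq \Mmt$ is of finite index, $\Mmt \subseteq \Mm_\Q \cap \X(B)$, and it remains to check $\langle \rho(D), \chi \rangle \in \Z$ for every $D \in \Dm$, where $\rho(D)$ is viewed via its unique $\Q$-linear extension from $\Mm$ to $\Mmt_\Q = \Mm_\Q$. By Propositions \ref{prop:pullback-col-fq}, \ref{prop:rho-of-pullback}, and \ref{prop:type-a-to-type-2a}, the pullback $\psi^*(D)$ is either a single color $\Dt \in \Dmt$ with $\rhot(\Dt)|_\Mm = \rho(D)$, or a sum $\Dt_1 + \Dt_2$ in which case $\rhot(\Dt_i)|_\Mm = \tfrac{1}{2}\check{\alpha}|_\Mm = \rho(D)$ for the relevant $\alpha$. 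In both cases, the extension of $\rho(D)$ to $\Mmt_\Q$ coincides with $\rhot(\Dt_i) \in \Hom(\Mmt, \Z)$ and hence takes integer values on $\Mmt$, giving $\Mmt \subseteq \Mm^\circ$.

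I anticipate the main obstacle to be the case-by-case verification in the membership step: tracking through Table \ref{tab:sr} to confirm that every halved spherical root $\gamma^\circ = \tfrac{1}{2}\gamma$ appearing in $\Sigma^\circ$ meets the distinguished-root condition, so that the half-sum clause of Definition \ref{defi:dist-pair} holds in every case.
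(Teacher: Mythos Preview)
Your approach is the same as the paper's: use the order-reversing bijection of Proposition~\ref{prop:sph-subgrp-below}, match the minimal subgroup $H^\circ$ with the maximal Luna datum, and identify the latter with $\Sm^\circ$. The paper compresses this into one sentence, whereas you spell out the two required ingredients (membership of $\Sm^\circ$ in the family, and maximality via $\Mmt \subseteq \Mm^\circ$). Your maximality argument is correct and is exactly the kind of reasoning the paper implicitly relies on; note only that Proposition~\ref{prop:rho-of-pullback} is stated for colors of type~$a$, so for the single-color pullback case in general you should either observe that its proof works verbatim for arbitrary colors, or invoke the valuation argument directly.

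There is one genuine slip in the membership step. Your case split on whether $\gamma \in \X(R)$ is not the right dichotomy: in fact every $\gamma \in \Sigma$ with $\tfrac{1}{2}\gamma \in \Sigma_G$ already lies in $\X(R)$ (inspect Table~\ref{tab:sr}), so your ``otherwise'' branch is vacuous, and your ``when $\gamma \in \X(R)$'' branch then claims $\gamma^\circ$ is always distinguished---which is false, e.g.\ for $\gamma = \alpha + \alpha'$ of type $\mathsf{A}_1 \times \mathsf{A}_1$ with $\gamma^\circ = \tfrac{1}{2}(\alpha+\alpha') \notin \X(R)$. The correct split is on whether $\gamma^\circ \in \X(R)$: if $\gamma^\circ \notin \X(R)$ you are done immediately; if $\gamma^\circ \in \X(R)$ then either $\gamma^\circ \in S$ (where your construction gives $\rho^\circ(D_\alpha^\pm) = \tfrac{1}{2}\check{\alpha}|_{\Mm^\circ}$, so $\gamma^\circ$ is distinguished of type~(1)) or $\gamma^\circ \in \X(R)\setminus S$ (where $2\gamma^\circ = \gamma \in \Sigma$ is already compatible with $S^p$, so $\gamma^\circ$ is distinguished of type~(2)). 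With this correction your argument goes through.
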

\begin{proof}
  Follows by the bijection in Proposition \ref{prop:sph-subgrp-below}, as $H^\circ$ is the unique minimal element on the left hand side while $\Sm^\circ$ is the unique maximal element on the right hand side.
\end{proof}

\subsection{Examples}
\label{sec:ex}

The following examples illustrate several phenomena of the Luna datum $\Sm^\circ = ( \Mm^\circ, \Sigma^\circ, S^p, (\Dm^\circ)^a )$. In Example \ref{ex:id-comp-1}, we consider a connected spherically closed spherical subgroup such that there is a color of type $a$ preventing a spherical root to be replaced by its half. In Example \ref{ex:id-comp-2}, we investigate a disconnected spherically closed spherical subgroup such that there is a color of type $2a$ preventing a spherical root to be replaced by its half. In Example \ref{ex:id-comp-g2}, we look at a connected spherically closed spherical subgroup such that there are two colors of type $2a$ preventing each other to be replaced by two colors of type $a$ respectively. In Example \ref{ex:id-comp-latt}, we have a spherically closed spherical subgroup which is not connected and the spherical roots of the subgroup and its identity component coincide. Finally, in Example \ref{ex:conn-depends-on-grp}, we have a connected spherical subgroup of a semi-simple group which is not simply connected, and taking the preimage under the isogeny of the universal cover yields a disconnected spherical subgroup. We can detect all these properties combinatorially.

\begin{example}
  \label{ex:id-comp-1}
  The following example is taken from \cite[Table B]{Wasserman}. Let $G = \Spin_7$ and $H$ the subgroup of $G$ isomorphic to $\C^\times \times \Spin_5$. Choose a Borel subgroup $B \subseteq G$ and a maximal torus $T \subseteq B$. We use the usual Bourbaki numbering of the induced set of simple roots $S = \set{ \alpha_1, \alpha_2, \alpha_3 }$. By Wasserman, we have $\Sigma = \set{ \alpha_1, 2 \alpha_2 + 2 \alpha_3 }$, $\Mm = \lspan_\Z \Sigma$, $S^p = \set{ \alpha_3 }$ and $\Dm^a = \set{ D^\pm }$ with $\langle \rho( D^\pm ), \alpha_1 \rangle = 1$ and $\langle \rho( D^\pm ), 2 \alpha_2 + 2 \alpha_3 \rangle = -1$. Let $\gamma \coloneqq 2 \alpha_2 + 2 \alpha_3$. According to Table \ref{tab:sr}, $\gamma' \coloneqq \tfrac{1}{2} \gamma = \alpha_2 + \alpha_3$ is also a spherical root of $G$. It is straightforward to verify that $S^{pp}(\gamma') = \emptyset$ and $S^p(\gamma') = \set{ \alpha_3 }$, so that $( S^p, \gamma' )$ is compatible. Nevertheless $\gamma' \not \in \Sigma^\circ$ as $\langle \rho( D^\pm ), \gamma' \rangle = -\tfrac{1}{2} \not \in \Z$. Indeed we already have $H = H^\circ$.
\end{example}

\begin{example}
  \label{ex:id-comp-2}
  Consider $G = \Spin_7$ and $H' = N_G( H )$ the normalizer of the subgroup form Example \ref{ex:id-comp-1}. We use the notation introduced there. By Example \ref{ex:id-comp-1} and Theorem \ref{thm:hsd-normalizer}, we have $\Sigma = \set{ 2 \alpha_1, 2 \alpha_2 + 2 \alpha_3 }$, $\Mm = \lspan_\Z \Sigma$, $S^p = \set{ \alpha_3 }$ and $\Dm^a = \emptyset$. As in Example \ref{ex:id-comp-1}, we have that $( S^p, \alpha_2 + \alpha_3 )$ is compatible but still $\alpha_2 + \alpha_3 \not \in \Sigma^\circ$ because $\langle \rho^\circ( D ), \alpha_2 + \alpha_3 \rangle = \langle \tfrac{1}{2} \check{\alpha}_1, \alpha_2 + \alpha_3 \rangle = - \tfrac{1}{2} \not \in \Z$ where $D$ is the unique color of type $2a$ moved by $P_{\alpha_1}$. However $\alpha_1 \in \Sigma^\circ$, so that $\Sigma^\circ = \set{ \alpha_1, 2 \alpha_2 + 2 \alpha_3 }$. Indeed, it is straightforward to verify that $( H' )^\circ \cong \C^\times \times \Spin_5$.
\end{example}

\begin{example}
  \label{ex:id-comp-g2}
  The following example is taken from \cite[Table G]{Wasserman}. Let $G = G_2$ and $H \subseteq G$ a subgroup isomorphic to $\SL_2 \times \SL_2$. Choose a Borel subgroup $B \subseteq G$ and a maximal torus $T \subseteq B$. We use the usual Bourbaki numbering of the induced set of simple roots $S = \set{ \alpha_1, \alpha_2 }$. By Wasserman, we have $\Sigma = \set{ 2 \alpha_1, 2 \alpha_2 }$, $\Mm = \lspan_\Z \Sigma$, $S^p = \emptyset$ and $\Dm^a = \emptyset$. Let $\Dm(\alpha_i) = \set{ D_i }$ for $i = 1, 2$. We have $\langle \rho(D_1), 2 \alpha_2 \rangle = -3$ and $\langle \rho(D_2), 2 \alpha_1 \rangle = -1$. Let $\gamma \coloneqq 2 \alpha_i$ for $i = 1, 2$. According to Table \ref{tab:sr}, $\gamma' \coloneqq \tfrac{1}{2} \gamma = \alpha_i$ is a spherical root of $G$. Moreover, it is straightforward to verify that $S^{pp}(\gamma') = \emptyset$ and $S^p(\gamma') = \emptyset$, such that $( S^p, \gamma' )$ is compatible. But still $\gamma' \not \in \Sigma^\circ$, as
  \[
    \langle \rho( D_j ), \gamma' \rangle =
    \begin{cases}
      -\tfrac{1}{2} & \text{for} \; i = 1, \; j = 2 \text{,} \\
      -\tfrac{3}{2} & \text{for} \; i = 2, \; j = 2 \text{.}
    \end{cases}
  \]
  Hence $\Sigma^\circ = \Sigma$. It is straightforward to verify that $\Mm^\circ = \Mm$, and thus $H$ is already connected.
\end{example}

\begin{example}
  \label{ex:id-comp-latt}
  The following example is taken from \cite[Table A]{Wasserman}. Let $G = \SL_2 \times \SL_2$ and $H = BC_G \subseteq G$ the wonderful subgroup which can be written as the product of a Borel subgroup $B \subseteq \SL_2$ diagonally embedded into $G$ and the center $C_G$ of $G$. Choose a Borel subgroup $B' \subseteq G$ and a maximal torus $T \subseteq B'$. We write $S = \set{ \alpha, \alpha' }$ for the induced set of simple roots. By Wasserman, we have $\Sigma = \set{ \alpha, \alpha' }$, $\Mm = \lspan_\Z \Sigma$, $S^p = \emptyset$ and $\Dm^a = \set{ D^+_{\alpha, \alpha'}, D^-_\alpha, D^-_{\alpha'} }$ equipped with the map $\rho \colon \Dm^a \to \Nm \coloneqq \Hom_\Z( \Mm, \Z )$ given by
  \[
    \rho(D^+_{\alpha, \alpha'}) = ( 1, 1 ), \quad \rho(D^-_\alpha) = ( 1, -1 ), \quad \rho(D^-_{\alpha'}) = ( -1, 1)
  \]
  with respect to the basis of $\Nm$ dual to the basis $\alpha, \alpha'$ of $\Mm$. Observe that $H$ is not connected, namely $H^\circ = B$. By Table \ref{tab:sr}, it is clear that $\Sigma^\circ = \Sigma$. So the lattice $\Mm^\circ$ has to be different from $\Mm$. A straightforward computation shows that $\Mm^\circ = \Z \tfrac{1}{2}( \alpha + \alpha' ) \oplus \Z \alpha$.
\end{example}

\begin{example}
  \label{ex:conn-depends-on-grp}
  Let $G = \PGL_2 \times \PGL_2$ and $H = \PGL_2$ where $\PGL_2$ gets diagonally embedded into $G$. Fix a Borel subgroup $B$ and a maximal torus $T$ such that the corresponding root system is given by $R = \set{ \pm \alpha, \pm \alpha' }$ with simple roots $S = \set{ \alpha, \alpha' }$. Recall that $\X(B) = \Z \alpha \oplus \Z \alpha'$. It is straightforward to verify that the Luna datum $\Sm$ of $G/H$ is given by
  \[
    \Mm = \Z ( \alpha + \alpha' ), \Sigma = \set{ \alpha + \alpha' }, S^p = \emptyset, \Dm^a = \emptyset \text{.}
  \]
  Thus, by Corollary \ref{cor:char-conn}, $H \subseteq G$ is connected. Let $\Gt = \SL_2 \times \SL_2$ and $\Ht =N_\Gt( \SL_2 )$ where $\SL_2$ gets diagonally embedded into $\SL_2 \times \SL_2$. Observe that there is an isogeny $\pi \colon \Gt \to G$ such that $\pi^{-1}(H) = \Ht$ and $\Gt$ is the universal cover of $G$. Then $G / H \cong \Gt / \Ht$. Choose a Borel subgroup $\Bt \subseteq \Gt$ and a maximal torus $\Tt \subseteq \Bt$ such that $\pi(\Bt) = B$ and $\pi(\Tt) = T$. The root systems and simple roots of $\Gt$ and $G$ get identified via the comorphism $\pi^* \colon \X(B) \to \X(\Bt)$. Moreover, $\pi^*$ also identifies the Luna datum of $\Gt/ \Ht$ with $\Sm$. Recall that $\X(\Bt) = \Z \tfrac{\alpha}{2} \oplus \Z \tfrac{\alpha'}{2}$ such that, by Corollary \ref{cor:char-conn}, $\Ht$ is not connected, as
  \[
    \set{ x \in \Mm_\Q \cap \X\rleft( \Bt \rright) \colon \langle \rho( \Dm ), x \rangle \subseteq \Z } = \Z \tfrac{ \alpha + \alpha' }{2} \text{.}
  \]
  Indeed $\Ht^\circ = \SL_2$.
\end{example}

 \newcommand{\noop}[1]{}
\providecommand{\bysame}{\leavevmode\hbox to3em{\hrulefill}\thinspace}
\providecommand{\MR}{\relax\ifhmode\unskip\space\fi MR }
\providecommand{\MRhref}[2]{%
  \href{http://www.ams.org/mathscinet-getitem?mr=#1}{#2}
}
\providecommand{\href}[2]{#2}

\end{document}